\title{Measure equivalence and sofic approximations}
\author{Thiebout Delabie, Juhani Koivisto and Romain Tessera}
\date{\today}
\newtheorem{theorem}{Theorem}[section]
\newtheorem{lemma}[theorem]{Lemma}
\newtheorem{clai}[theorem]{Claim}
\newtheorem{proposition}[theorem]{Proposition}
\theoremstyle{definition}
\newtheorem{definition}[theorem]{Definition}
\newtheorem{remark}[theorem]{Remark}
\newtheorem{warning}[theorem]{Warning}
\newtheorem*{ack}{Acknowledgments}
\newcommand{\diam}{\operatorname{diam}}
\newcommand{\GL}{\operatorname{GL}}
\newcommand{\SOL}{\operatorname{SOL}}
\renewcommand{\Im}{\operatorname{Im}}
\newcommand{\supp}{\operatorname{supp}}
\newcommand{\SL}{\operatorname{SL}}
\newcommand{\LL}{\operatorname{L}}
\newcommand{\N}{\mathbb{N}}
\newcommand{\PP}{\mathbb{P}}
\newcommand{\Z}{\mathbb{Z}}
\newcommand{\R}{\mathbb{R}}
\newcommand{\B}{\mathfrak{B}}
\newcommand{\A}{\mathfrak{A}}
\newcommand{\uu}{\mathfrak{u}}
\newcommand{\vv}{\mathfrak{v}}
\newcommand{\aaa}{\mathfrak{a}}
\newcommand{\cc}{\mathfrak{c}}
\newcommand{\eps}{\varepsilon}
\newcommand{\Lamp}{\mathcal{L}}
\newcommand{\AlignRight}[1]{\ifmeasuring@#1\else\omit\hfill$\displaystyle#1$\fi\ignorespaces}
\begin{document}

\maketitle

\begin{abstract}
We introduce a technique for producing a measure coupling between two sofic groups from a family of maps between their sofic approximations. We exploit this to construct measure couplings between pairs of groups with prescribed integrability conditions. As an application we show that solvable Baumslag-Solitar groups, Lamplighters and the group SOL are all exponentially measure equivalent to one another: in particular they are $L^p$ measure equivalent for all $p<\infty$. This is in sharp contrast with the fact that these groups are in general not quasi-isometric to one another: indeed, for instance $\Z/2\Z\wr \Z$ is not quasi-isometric to $\Z/3\Z\wr \Z$. 
\end{abstract}

\tableofcontents

\section{Introduction}

Measure equivalence between countable groups has been introduced by Gromov as a measure analogue of the notion of quasi-isometry. Recall that two countable groups $G$ and $H$ are measure equivalent (ME) if there exists a standard measure space $(\Omega,\mu)$ equipped with free, commuting, measure preserving actions of $G$ and $H$, such that both actions admit fundamental domains with finite measure.
A well-known consequence of a celebrated theorem of Ornstein-Weiss \cite{OrnWei-80} is that all infinite countable amenable groups are measure equivalent. 

As we shall recall in the next paragraph, the data of the two fundamental domains can be used to define two  cocycles. These can somehow be viewed as measured analogues of mutual morphisms between the groups, which are in a weak sense inverses of one another. It turns out that some rigidity can be restored by imposing integrability conditions on these cocycles.  For instance Bowen proved that $L^1$ ME preserves the volume growth \cite{Aus-16}. 
 A general theory of ``quantitative'' ME was developed in \cite{DKLMT-22} (see next paragraph).  This theory raises two main problems: 
\begin{itemize}
\item  finding limitations on the integrability properties of a measure equivalence coupling between two groups;
\item  conversely constructing measure equivalence couplings between those groups whose integrability properties match these limitations.
\end{itemize}
In  \cite{DKLMT-22}, it is shown that the volume growth and more importantly the isoperimetric profile can both be used to answer the first question (for amenable groups). Regarding the second problem, 
it could be tempting to exploit  the proof of Ornstein-Weiss's theorem. However, their result happens to be too general to produce measure equivalences with good integrability conditions. To remedy for this, the authors of \cite{DKLMT-22} exploit a notion of F\o lner tiling sequences\footnote{F\o lner tiling sequences appear in \cite{Dan-16} in a different context and under a different name (see also \cite{CecCor-19}).} to produce explicit ME couplings between amenable groups with prescribed integrability conditions. This method has then been used extensively in \cite{Escalier1} to prove that the constraint given by the isoperimetric profile is nearly sharp in a wide class of situations (see \S \ref{sec:motivation}). 
More recently, in \cite{LpMEnilp} Llosa and the first and third author use F\o lner tiling sequences in nilpotent groups to classify them up to $L^p$ ME for $p\leq 1$.

Despite their undeniable usefulness, F\o lner tiling sequences present several limitations. First, they  might not exist in all amenable groups. Besides, even when they do, they might not yield ME couplings with sharp integrability conditions. Finally, F\o lner tiling sequences are limited to amenable groups. 

The goal of this note to present a more general framework to produce various forms of couplings: measure equivalent, orbit equivalent but also a less known form of coupling called  ``measure subgroup couplings''.
Besides, our method applies to the wider class of sofic groups.

\subsection{Quantitative measure equivalence}\label{secIntro:Quant}
Before stating our results let us briefly recall what we mean by quantitative measure equivalence (more details are provided in \S \ref{sec:PrelimME}). In the sequel, we let $(G,S_G)$ and $(H,S_H)$ be finitely generated groups equipped with finite generating sets. 
Let $(\Omega, \mu)$ be a standard Borel $\sigma$-finite measure space equipped with commuting, free measure preserving actions of  $G$ and  $H$. We assume that these actions admit fundamental domains $X_G$ and $X_H$ such that $X_H$ has  measure $1$. Without more assumptions on $X_G$, we call such a coupling a {\bf measure subgroup} coupling\footnote{Note that if $G$ is a subgroup of $H$, then $\Omega=H$ equipped with its counting measure is trivially a measure subgroup coupling from $G$ to $H$.} from $G$ to $H$. If we assume that $X_G$ also has finite measure, then this is a {\bf measure equivalence} (ME for short) coupling. Our coupling will be called an {\bf orbit equivalent} (OE) coupling if in addition $X_G=X_H$ (for the connection with orbit equivalent p.m.p.\ actions, see \S \ref{sec:PrelimME}).

 A measure subgroup coupling comes with a cocycle $\alpha:G\times X_H\to H$, where $\alpha(g,x)\in H$ is the unique element which maps $g* x\in \Omega$ back to $X_H$. Similarly for an ME coupling, we define a cocycle $\beta: H\times X_G\to G$. 
 \begin{definition}\label{defIntro:QuantME}
     Let $\varphi, \psi \colon \mathbb{R}_{\geq 0}\to \mathbb{R}_{\geq 0}\cup\{\infty\}$ be non-decreasing functions and let $S_G\subset G$, $S_H\subset H$ be finite generating sets for $G$ and $H$. We will say that a measure subgroup coupling from $G$ to $H$ is \emph{$\varphi$-integrable} if there is $\delta>0$ such that for all $s\in S_G$
     \[
        \int_{X_H} \varphi\left(\delta|\alpha(s,x)|_{S_H}\right) d\mu(x) < +\infty.
     \]
     We will say that the ME coupling from $G$ to $H$ is \emph{$(\varphi,\psi)$-integrable} if, in addition,  there is $\delta'>0$ such that for all $s\in S_H$
     \[
        \int_{X_G} \psi\left(\delta'|\beta(s,x)|_{S_G}\right) d\mu(x) < +\infty.        
     \]
\end{definition}
When $\varphi(t)=t^p$, we generally write $L^p$-integrable instead of $\varphi$-integrable. Note that the extreme cases $p=0$ or $p=\infty$ both make sense here: 
$p=0$, when we do not impose any condition, and $p=\infty$, which means that $x\mapsto |\alpha(s,x)|_{S_H}$ is essentially bounded.
 For instance an ME coupling where we only impose a condition of $\varphi$-integrablity on the cocycle from $G$ to $H$ will be called $(\varphi,L^0)$-integrable. 
 
Note that we introduce the constants $\delta$ and $\delta'$ so that the definitions do not depend on a choice of generating sets.

Finally, to be coherent with the literature, two groups are called $L^p$ ME if there exists an $(L^p,L^p)$-integrable measure coupling between them.

 \subsection{Motivation: sharpness of the isoperimetric obstruction}\label{sec:motivation}

As already mentioned,  an efficient tool to restrain the integrability of couplings between amenable groups is their isoperimetric profile.
 Recall that given a finitely generated group $G$ equipped with a finite generating symmetric subset $S$, one defines its isoperimetric profile	\[j_{G}(n)=\sup_{|A|\leq n} \frac{|A|}{|\partial A|},\]
where $\partial A=S_\Gamma A\bigtriangleup A$. In \cite[Theorem 1]{DKLMT-22}, the following result is proved.
	\begin{theorem}\label{thmintroIsop}
		Assume that there exists either a $(\varphi,L^0)$-integrable ME-coupling, or an injective\footnote{Here injective means that we require that for a.e.\ $x\in X_H$, the map $\alpha(\cdot, x):G\to H$ is injective.} $\varphi$-integrable measure subgroup coupling from $G$ to $H$.  Then 
		\begin{itemize}
			\item if $\varphi(t)=t$ then \[j_{G}\succcurlyeq j_{H};\]
			\item if $\varphi$ and $t\mapsto t/\varphi(t)$ are increasing then
			\[
			j_{G}\succcurlyeq \varphi\circ j_{H}.
			\]
		\end{itemize}
	\end{theorem}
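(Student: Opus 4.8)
The plan is to pass to the standard analytic reformulation of the isoperimetric profile: up to universal multiplicative constants, $j_G(n)$ is the largest constant for which $\|f\|_{\ell^1(G)}\le j_G(n)\,\|\nabla_{S_G}f\|_{\ell^1}$ holds for every finitely supported $f\colon G\to\R$ with $|\supp f|\le n$, where $\|\nabla_{S_G}f\|_{\ell^1}=\sum_{g\in G,\,s\in S_G}|f(gs)-f(g)|$; testing on indicators $f=\mathbf 1_A$ recovers the combinatorial definition. It therefore suffices to exhibit, for each $n$, a test function on $G$ whose ratio $\|f\|_{\ell^1}/\|\nabla_{S_G}f\|_{\ell^1}$ is at least a constant times $j_H(n)$ in the linear case, resp. $\varphi\circ j_H(n)$ in the nonlinear case.

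First I would fix $n$, choose a near-optimal F\o lner set $B\subseteq H$ with $|B|\le n$ and $|B|/|\partial_{S_H}B|\ge\frac12 j_H(n)$, and transport it through the cocycle: for $x\in X_H$ set $A_x=\{g\in G:\alpha(g,x)\in B\}$ and $f_x=\mathbf 1_{A_x}$. The crucial pointwise bound is $|A_x|\le n$ for a.e.\ $x$; in the injective measure subgroup case this is immediate since $\alpha(\cdot,x)$ is injective, so $|A_x|\le|B|\le n$, whereas in the ME case it follows, after rescaling by the constant $\delta$, from the finiteness of $\mu(X_G)$, which bounds the multiplicity of $\alpha(\cdot,x)$. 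Granting this, $j_G(n)\ge|A_x|/\|\nabla_{S_G}f_x\|_{\ell^1}$ for a.e.\ $x$, hence $\|\nabla_{S_G}f_x\|_{\ell^1}\ge|A_x|/j_G(n)$; integrating over $X_H$ yields $j_G(n)\ge\big(\int_{X_H}|A_x|\,d\mu\big)\big/\big(\int_{X_H}\|\nabla_{S_G}f_x\|_{\ell^1}\,d\mu\big)$, so it remains to bound the two integrals.

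The numerator $\int_{X_H}|A_x|\,d\mu=\sum_{g}\mu\{x\in X_H:\alpha(g,x)\in B\}$ measures the coverage of the $H$-tiles indexed by $B$ by the $G$-translates, and unwinding the fundamental-domain structure expresses it as the $\mu$-mass of $B$ counted through the coupling. For the denominator, if exactly one of $\alpha(g,x),\alpha(gs,x)$ lies in $B$ then the geodesic in $H$ joining them crosses $\partial_{S_H}B$, and by the cocycle identity this geodesic has $S_H$-length at most $|\alpha(s,\cdot)|_{S_H}$ at a shifted base point; summing over the edges of this geodesic and changing variables by the measure-preserving induced $G$-action on $(X_H,\mu)$, each boundary edge of $B$ is charged a total weight governed by $\sum_{s\in S_G}\int_{X_H}|\alpha(s,x)|_{S_H}\,d\mu$, which is finite precisely by $L^1$-integrability. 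The key point is that the same coverage density appears in both integrals and cancels in the quotient, leaving $\int_{X_H}|A_x|\,d\mu\big/\int_{X_H}\|\nabla_{S_G}f_x\|_{\ell^1}\,d\mu\gtrsim|B|/|\partial_{S_H}B|\gtrsim j_H(n)$, which proves $j_G\succcurlyeq j_H$.

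For general $\varphi$ with $\varphi$ and $t\mapsto t/\varphi(t)$ increasing, the denominator estimate must retain the dependence on the geodesic lengths: a boundary edge reached through a cocycle increment of length $\ell$ should be charged like $\varphi(\ell)$ rather than $\ell$, and I would feed the hypothesis $\int_{X_H}\varphi(\delta|\alpha(s,x)|_{S_H})\,d\mu<\infty$ into Jensen's inequality while optimizing over the scale at which $B$ is chosen. The monotonicity of $t\mapsto t/\varphi(t)$ is exactly what makes this optimization trade the average stretch against the boundary size in the correct direction, upgrading the conclusion to $j_G\succcurlyeq\varphi\circ j_H$. I expect the main obstacle to be this denominator estimate: carrying out the change of variables so that the coverage density genuinely cancels between numerator and denominator, and, in the nonlinear regime, arranging the Jensen and optimization step so that $\varphi$ ends up on the outside of $j_H$. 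The secondary delicate point is the support bound $|A_x|\le n$ in the ME case, which (unlike the injective case, where it is free) is precisely where finiteness of $\mu(X_G)$ must be used.
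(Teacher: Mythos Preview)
This theorem is not proved in the present paper: it is quoted verbatim from \cite[Theorem~1]{DKLMT-22} and only used here as motivation (see the sentence immediately preceding the statement). There is therefore no ``paper's own proof'' to compare your proposal against.

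That said, your sketch is in the right spirit for how the result is established in \cite{DKLMT-22}: one transports a near-optimal F\o lner set $B\subset H$ through the cocycle, controls the support of the pullback via injectivity (or, in the ME case, via the finite coupling index), and bounds the averaged gradient by the $\varphi$-integrability of $\alpha$. Two points where your outline is thinner than the actual argument: first, in the ME (non-injective) case the bound $|A_x|\le n$ does \emph{not} follow just from ``finiteness of $\mu(X_G)$ bounding the multiplicity''---one has to work instead with the induced $G$-action on $X_H$ and use that the coupling index controls the number of preimages in a way that changes $n$ by a fixed multiplicative constant; second, your ``coverage density cancels between numerator and denominator'' is where the real work lies, and in the sublinear regime the Jensen/optimization step producing $\varphi$ on the outside of $j_H$ requires the specific concavity-type hypothesis that $t\mapsto t/\varphi(t)$ is increasing in a more explicit way than you indicate. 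If you want to carry this out, the reference to consult is \cite{DKLMT-22} rather than the present paper.
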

We refer to \cite{DKLMT-22} for concrete examples and references therein. Let us simply mention that this theorem implies that an $(L^p,L^0)$-integrable ME-coupling from $\Z^q$ to $\Z^k$, with $q>k$ must satisfy $p\leq k/q$, or that a  $(\varphi,L^0)$-integrable ME-coupling from $\Z/2\Z\wr \Z$ to $\Z$ must satisfy $\uu(t)=O(\log t).$ 
Using a clever idea, Correia recently managed to strengthen the conclusion by treating the ``critical case'' (see \cite[Theorem B]{Correia-24} for a precise statement). In particular, for the two examples above, his result yields strict conditions: i.e.\ respectively $p> k/q$, and $\uu(t_n)=o(\log t_n)$ along a subsequence.

\cite[Theorem 1.7]{Escalier1}, Escalier shows that Theorem \ref{thmintroIsop} is nearly optimal\footnote{up to a $\log$-factor.} when $H=\Z$ (whose profile grows linearly). For that purpose, she uses the family of groups constructed by Brieussel and Zheng \cite{BrieusselZheng}, which are designed to roughly realize any possible behavior of the isoperimetric profile. In the process she gives an almost full solution to the inverse problem of finding groups with prescribed integrability conditions for their OE-couplings with $\Z$.

More recently in \cite[Theorem~1.3]{Escalier2}, she manages to replace $\Z$ as target group by the lamplighter group $\Z/2\Z\wr \Z$ (whose profile grows as $\log n$), but at the cost of having to replace OE couplings with injective measure subgroup couplings. 

While in her first construction, she uses F\o lner tiling sequences, in the second one, she relies on results presented in this work (in her situation, the sofic approximations are F\o lner sequences).

\subsection{Sofic approximations}
In \cite{Das}, Das proves the following statement. 
\begin{theorem}
Let $G$ and $H$ be  two residually finite finitely generated groups $G$ and $H$, and let $(G_n)$ and $(H_n)$ be two sequences of finite quotients converging respectively to $G$ and $H$ for the marked groups topology. If $G_n$ and $H_n$  are uniformly quasi-isometric, then $G$ and $H$ are $L^\infty$ measure equivalent.
\end{theorem}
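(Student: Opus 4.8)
The plan is to produce the coupling as an \emph{orbit equivalence with bounded cocycle} between the two probability-measure-preserving (p.m.p.) actions that the sofic approximations define, and then to invoke the standard passage from orbit equivalence to measure equivalence, under which a bounded OE cocycle yields an $L^\infty$ ME coupling. First I would fix a non-principal ultrafilter $\omega$ on $\N$ and form the Loeb probability spaces $(\mathbf{G},\mu_G)=\prod_\omega (G_n,\nu_n)$ and $(\mathbf{H},\mu_H)=\prod_\omega(H_n,\eta_n)$, where $\nu_n,\eta_n$ are the normalized counting measures. Writing $G_n=G/N_n$, the group $G$ acts on each $G_n$ by left translation through $G\to G_n$; this action is free, transitive and measure preserving, so in the ultraproduct $G$ acts on $(\mathbf{G},\mu_G)$ preserving $\mu_G$. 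Convergence in the marked-group topology gives, for each fixed $g\neq e$, that $g\notin N_n$ for $\omega$-almost all $n$, hence the limit action is \emph{free}; since $|G_n|\to\infty$ the space $\mathbf{G}$ is non-atomic and standard. The same construction gives a free p.m.p.\ action $H\curvearrowright(\mathbf{H},\mu_H)$.

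Next I would transport the $H$-structure along the quasi-isometries. Let $f_n\colon G_n\to H_n$ be the given $(L,C)$-quasi-isometries and $\bar f_n\colon H_n\to G_n$ uniform quasi-inverses; they induce $f\colon\mathbf{G}\to\mathbf{H}$, $f([x_n])=[f_n(x_n)]$. The crucial point is that $f$ carries $G$-orbits into $H$-orbits with a \emph{bounded} cocycle: for $g\in G$ one has $d_{H_n}(f_n(g x_n),f_n(x_n))\le L|g|_{S_G}+C$, so the element $f_n(gx_n)f_n(x_n)^{-1}\in H_n$ lies in a ball of fixed radius $r=L|g|_{S_G}+C$. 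By marked-group convergence $B_{H_n}(r)$ is eventually isometric to $B_H(r)$, so this element is the image of a well-defined $c(g,[x_n])\in H$ with $|c(g,[x_n])|_{S_H}\le r$; one checks that $c$ is an $H$-valued cocycle and that $f(g\cdot x)=c(g,x)\cdot f(x)$. Thus $f$ intertwines the orbit equivalence relations with a cocycle taking values in a fixed finite ball, i.e.\ an $L^\infty$ cocycle, and the quasi-inverse $\bar f_n$ furnishes in the same way a bounded cocycle in the reverse direction.

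The remaining task is to upgrade $f$ to a genuine measure-preserving orbit equivalence without enlarging the displacement. Because $f_n$ need be neither injective nor measure balanced, $f$ need not preserve $\mu_G$, and I would correct it by a \emph{measured bounded-displacement matching}. Both $\mathbf{G}$ and $\mathbf{H}$ are non-atomic, and $f$ already sends each point to within a fixed distance of a measure-comparable target, so the bipartite graph joining $x$ to the (boundedly many) points of $\mathbf{H}$ within distance $D$ of $f(x)$ should satisfy a measurable Hall/F\o lner-type expansion condition; a max-flow (measurable marriage) argument then yields a measure isomorphism $\Phi\colon\mathbf{G}\to\mathbf{H}$ with $d_{\mathbf H}(\Phi(x),f(x))\le D$ almost everywhere. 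Bounded displacement preserves the cocycle estimate, so $\Phi$ is an orbit equivalence between the two free p.m.p.\ actions with $L^\infty$ cocycles in both directions; the standard construction of an ME coupling out of an orbit equivalence then produces an $(L^\infty,L^\infty)$-integrable coupling between $G$ and $H$.

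I expect the main obstacle to be exactly this last measured matching step: the quasi-isometries are not equivariant and do not preserve measure, so the difficulty is to simultaneously repair the failure of bijectivity and of measure preservation while keeping the repair within bounded $H$-distance, so that the cocycle stays in $L^\infty$. The non-atomicity of the Loeb measure (which dissolves the size discrepancy $|G_n|\asymp|H_n|$ into a bounded Radon--Nikodym derivative) together with the uniformity of the quasi-isometry constants (which bounds the reach of the matching) are what should make the marriage argument go through; verifying the measurable Hall condition in the ultraproduct is the technical heart of the proof.
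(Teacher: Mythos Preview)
The paper does not give its own proof of this statement---it is quoted as a result of Das---but the paper's general framework (Theorems~\ref{thm:ME} and \ref{thm:quantME}, Propositions~\ref{prop:fundamentalDomainBis}, \ref{prop:QuantSubgroup}, \ref{prop:Cocycle}) specializes to a proof. That route is quite different from yours: rather than attempting an orbit equivalence between the two Loeb spaces, one builds the measure coupling \emph{directly} on the function space $(H\times F)^G$ (Das) or $(H\times\N)^G$ (this paper). The maps $f_n$ define almost-cocycles $T_n$ exactly as in your second paragraph, and these are used to push the Loeb measure on $[\mathcal G_n]_{\mathcal U}$ forward to a $G\times H$-invariant Borel measure on the function space; fundamental domains for $G$ and $H$ are then exhibited explicitly. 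The point is that for measure \emph{equivalence} one never needs the two fundamental domains to coincide, so no matching is required---the uniform quasi-isometry constants translate directly into finiteness of $\mu(X_G)$ and boundedness of both cocycles.

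Your proposal, by contrast, aims for an honest orbit equivalence, and the matching step you isolate is a genuine gap rather than a technicality. Concretely: nothing in the hypotheses forces $|G_n|=|H_n|$, only that the ratio is bounded. If, say, $|G_n|=2|H_n|$ for all $n$ and $f_n$ is two-to-one, then no injection $G_n\to H_n$ exists at any finite stage, so your $\Phi$ cannot arise as an ultralimit of finite bijections; it must exploit the non-atomicity of the Loeb measure in an essential way. But then the bipartite graph you describe (edges $x\sim y$ when $d_H(y,f(x))\le D$) satisfies the measured Hall condition only \emph{with equality} in general: for $A=[A_n]$ a union of $f_n$-fibres one gets $\mu_H(N_D(f(A)))=\mu_G(A)$ with no slack, and the measurable matching theorems you allude to typically need strict expansion or additional hyperfiniteness/amenability input to produce an a.e.\ perfect matching. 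You have not indicated how to supply this, and ``max-flow'' intuition does not transfer automatically to the non-atomic measurable setting. The Das/paper approach sidesteps the whole issue by asking only for ME with cobounded fundamental domains, which is what uniform quasi-isometry actually delivers.
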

This served as a starting point for this work. Here we will generalize his construction to the situation where we have a certain sequence of maps $\uu_n$ (not necessarily uniform quasi-isometries) between sofic approximations of $G$ and $H$. 
 Recall that sofic approximations are finite graphs that approximate the Cayley graph of a group in the following sense.
\begin{definition}
	Let $G$ be a finitely generated group with a finite generating set $S_G$ and let $(\mathcal{G}_n)_n$ be a sequence of directed, labeled graphs. Then $(\mathcal{G}_n)_n$ is a sofic approximation of $(G,S_G)$ if for every $r>0$, the set $\mathcal{G}_n^{(r)}$ of vertices $x\in \mathcal{G}_n$ such that $B_{\mathcal{G}_n}(x,r)$ is  isomorphic to $B_G(e_G,r)$ as directed, labeled graphs satisfies \[\lim_{n\to \infty}\frac{\# \mathcal{G}_n^{(r)}}{\# \mathcal{G}_n}=1.\] 
\end{definition}
The most classical examples of sofic approximations are F\o lner sequences in Cayley graphs of amenable groups, or as in Das's theorem Cayley graphs of finite quotients of a residually finite group $G$ associated to a fixed generating set.

\

Note that there exists a right ``almost-action'' of $G$ on the vertex sets of the graphs $\mathcal{G}_n$. That is, for $g\in G$ and $x\in \mathcal{G}_n$ if moreover $x\in \mathcal{G}_n^{(|g|)}$, then there exists a unique isomorphism of labelled graph $\theta_x\colon B_G(e_G,|g|)\to B_{\mathcal{G}_n}(x,|g|)$ and we let  $xg: = \theta_x(g)$. 

We will equip $\mathcal{G}_n$ with the uniform probability measure  $\PP_{\mathcal{G}_n}$. The soficity assumption can be reformulated as 
\begin{equation}\label{eq:sofic}
\lim_n \PP_{\mathcal{G}_n}\left(\mathcal{G}_n^{(r)}\right)=1,
\end{equation}
for all $r\geq 0$.
\begin{warning}
We shall use (\ref{eq:sofic}) to allow ourselves to write expressions of the form:
\[\lim_n  \PP_{\mathcal{G}_n}\left(\left\{x\in \mathcal{G}_n\mid F(x,xg)\right\}\right)= \textnormal{something},\]
where $F$ is formula involving $x$ and $xg$, when strictly speaking we should be writing instead
\[\lim_n  \PP_{\mathcal{G}_n}\left(\left\{x\in \mathcal{G}_n^{(|g|)}\mid F(x,xg)\right\}\right)= \textnormal{something}.\]
\end{warning}


\subsection{Main results}

Our general setup is the following:
\begin{itemize}
\item Measure subgroup context: let $G$ and $H$ be two finitely generated groups, $(\mathcal{G}_n)$ be a sofic approximation of $G$, and $\uu_n:\mathcal{G}_n\to H$ a sequence of maps.
 \item Measure equivalence context: let $G$ and $H$ be two finitely generated groups, $(\mathcal{G}_n)$, $(\mathcal{H}_n)_n$ are sofic approximations of $G$ and $H$, and  $\uu_n:\mathcal{G}_n \to \mathcal{H}_n$ a sequence of maps.
\end{itemize}
 
 We will prove that under certain conditions on the sequence $\uu_n$, one can construct either a measure subgroup, an ME or an OE coupling from $G$ to $H$. By making these conditions ``quantitative'', we will furthermore obtain couplings with prescribed integrability properties.

 We view each $\mathcal{G}_n$ as a finite metric measure space, where the metric comes from the graph structure, and the measure is  $\PP_{\mathcal{G}_n}$. 
The conditions we shall impose on $\uu_n$ will consist in  ``relaxing''  in a probalistic sense coarse metric properties. By ``relaxing'', we mean that we will require that these conditions are satisfied with high probability as $n\to \infty$. 
For precise definitions, we refer to \S \ref{sec:statistical}, and more precisely to Definitions \ref{def:statisticSet} and \ref{def:statisticCoarseMetric}. Let us mention that these definitions require the use of a non-principal ultrafilter $\mathcal U$ on $\N$.


Our first statements are qualitative: they provide conditions on the maps $\uu_n$ such that there exist subgroup, ME or OE couplings. We shall say that a measure subgroup coupling is injective if  the map  $\alpha(\cdot,x):G\to H$ is injective for a.e.\ $x\in X_H$ (this property is required in \cref{thmintroIsop}).

\begin{theorem}\label{thm:Subgroup}
Assume we are given a sequence of injective maps $\uu_n\colon \mathcal{G}_n\to H$, which is $\mathcal U$-statistically Lipschitz. Then there exists an injective measure subgroup coupling $(\Omega,\mu)$ from $G$ to $H$.
\end{theorem}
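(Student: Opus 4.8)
The plan is to realize the coupling as a twisted product of an ultralimit of the spaces $\mathcal{G}_n$ with the group $H$, the twist being provided by the maps $\uu_n$. Fix the non-principal ultrafilter $\mathcal U$ and let $(X,\mu_X)$ be the measured ultraproduct probability space of the sequence $(\mathcal{G}_n,\PP_{\mathcal{G}_n})$; a point of $X$ is a class $[x_n]$ of sequences $x_n\in\mathcal{G}_n$. Because of the soficity assumption (\ref{eq:sofic}), the right almost-action $x\mapsto xg$ is defined on a set of measure tending to $1$, so in the limit the maps $\sigma_g\colon[x_n]\mapsto[x_ng^{-1}]$ assemble into a genuine measure preserving action of $G$ on $(X,\mu_X)$, as is standard in sofic theory; moreover this action is essentially free, since for $g\neq e_G$ the point $x\in\mathcal{G}_n^{(|g|)}$ corresponds to $e_G$ in $B_G(e_G,|g|)$ while $xg$ corresponds to $g$, so $xg\neq x$ on $\mathcal{G}_n^{(|g|)}$ and the fixed-point locus is negligible in the limit. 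I then set $\Omega=X\times H$, endowed with $\mu=\mu_X\otimes\nu$ where $\nu$ is the counting measure on $H$, so that $(\Omega,\mu)$ is standard and $\sigma$-finite, and let $H$ act by $h\cdot(x,k)=(x,hk)$; this action is free, preserves $\mu$, and admits $X_H=X\times\{e_H\}$ as a fundamental domain of measure $\mu_X(X)=1$.

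Next I would build the commuting $G$-action from the $\uu_n$. Writing $\uu([x_n])=[\uu_n(x_n)]$ for the map induced into the ultrapower $\prod_{\mathcal U}H$, I set $c(g,x)=\uu(x)^{-1}\uu(\sigma_g x)$ and let $g\cdot(x,k)=(\sigma_g x,\,k\,c(g,x))$. A direct computation using $\sigma_g\sigma_{g'}=\sigma_{gg'}$ shows that $c$ satisfies the cocycle identity, so this is a left action; it preserves $\mu$ (the second coordinate is translated on the right, which preserves $\nu$) and commutes with the $H$-action (right versus left translation on the $H$-factor). The associated cocycle $G\times X_H\to H$ of \cref{defIntro:QuantME} is, up to the usual inversion convention, $\alpha(g,(x,e_H))=c(g,x)^{-1}=\uu(\sigma_g x)^{-1}\uu(x)$.

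The crux of the argument, and the step I expect to be the main obstacle, is to check that $c$ actually takes values in $H$ rather than merely in the ultrapower $\prod_{\mathcal U}H$. This is exactly what the $\mathcal U$-statistical Lipschitz hypothesis delivers: for each generator $s\in S_G$ it provides a constant $C$ with $\PP_{\mathcal{G}_n}\!\left(|\uu_n(x)^{-1}\uu_n(xs)|_{S_H}>C\right)\to 0$ along $\mathcal U$, which is precisely the content of Definitions~\ref{def:statisticSet} and~\ref{def:statisticCoarseMetric}. Hence for a.e.\ $x$ the sequence representing $c(s,x)$ lies $\mathcal U$-eventually in the finite ball $B_H(e_H,C)$, and an ultraproduct of functions valued in a finite set is $\mathcal U$-convergent almost everywhere; therefore $c(s,x)\in B_H(e_H,C)\subseteq H$ for a.e.\ $x$. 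The cocycle identity then propagates this from the generators to $c(g,x)\in H$ for every $g\in G$ and a.e.\ $x$, so that $\alpha$ is a genuine $H$-valued cocycle.

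It remains to verify freeness of the $G$-action, the existence of a fundamental domain, and injectivity. Injectivity of the coupling amounts to injectivity of $g\mapsto\alpha(g,(x,e_H))=\uu(\sigma_g x)^{-1}\uu(x)$, equivalently of $g\mapsto\uu(\sigma_g x)$. Since each $\uu_n$ is injective and $\sigma$ is essentially free, $\uu(\sigma_g x)=\uu(\sigma_{g'}x)$ forces $x_ng^{-1}=x_ng'^{-1}$ for $\mathcal U$-a.e.\ $n$, whence $\sigma_g x=\sigma_{g'}x$ and $g=g'$; this simultaneously shows that the $G$-action on $\Omega$ is free. Finally, injectivity of $g\mapsto c(g,x)$ means that along each $G$-orbit the $H$-coordinates $\{k\,c(g,x)\}_{g\in G}$ are pairwise distinct, so fixing an enumeration $H=\{h_0,h_1,\dots\}$ and selecting in each orbit the point whose $H$-coordinate has minimal index yields a measurable fundamental domain $X_G$ for $G$. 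This produces the desired injective measure subgroup coupling $(\Omega,\mu)$ from $G$ to $H$.
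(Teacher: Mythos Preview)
Your construction is essentially the paper's ``non-standard coupling'' $(\overline{\Omega},\overline{\mu})=\overline{\Omega}_0\times H$ from \S\ref{section:non-standard}: the ultraproduct $X=[\mathcal{G}_n]_{\mathcal U}$ with the free $G$-action and the cocycle $c(g,x)=\uu(x)^{-1}\uu(\sigma_g x)$ is exactly the paper's $T(g^{-1},x)$, and the argument that $c$ lands in $H$ (rather than merely in the ultrapower) via the statistical Lipschitz hypothesis is the content of \cref{prop:approximatecocycle}. The injectivity and freeness checks are correct, and the fundamental-domain construction for $G$ is the paper's as well.

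The gap is your sentence ``so that $(\Omega,\mu)$ is standard and $\sigma$-finite.'' The Loeb probability space $(X,\mu_X)$ is \emph{not} standard Borel: its measure algebra is in general non-separable, so $X\times H$ is not a standard Borel measure space, and the definition of measure subgroup coupling in \S\ref{secIntro:Quant} and \S\ref{sec:PrelimME} explicitly requires standard Borel. This is precisely why the paper does not stop at $(\overline{\Omega},\overline{\mu})$ but performs the additional step of \S\ref{sec:measure}: it pushes the measure forward along the $G\times H$-equivariant map $J\colon\overline{\Omega}\to (H\times\N)^G$ given by $J_0(x)(g)=(T(g,x),\rho(g^{-1}\cdot x))$, where $(H\times\N)^G$ is a genuine Polish space. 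In your setting the $\uu_n$ are injective, so the $\N$-factor is unnecessary and the map $x\mapsto\bigl(g\mapsto c(g^{-1},x)\bigr)\in H^G$ would already do the job; but you must make this factorization explicit and verify that the pushed-forward measure still has the required properties (freeness of the $G$-action, existence of fundamental domains). Without this step, what you have constructed is only the paper's intermediate object, not yet a measure subgroup coupling in the sense of the statement.
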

Note that the cocycle $\alpha$ can be viewed as a measurable family of maps $\alpha(\cdot,x):G\to H$ indexed by the probability space $(X_H,\mu)$. Similarly, the maps $\uu_n$ yield a family of maps from larger and larger balls of $G$ to $H$, indexed by $\mathcal{G}_n$. Indeed, since $\mathcal{G}_n$ is a sofic approximation of $G$, if $r$ is fixed while $n$ tends to infinity, most of the balls $B_{\mathcal{G}_n}(x,r)$ are isomorphic to the ball $B_G(e_G,r)$, so that the maps $\uu_n$ provide us with a family of maps from $B(e_G,r)$ to $H$, indexed by $\mathcal{G}_n^{(r)}$. The idea behind our construction is to define the maps $\alpha(\cdot, x)$ as the ultralimit of these maps\footnote{More precisely the limit corresponds to the family of maps $g\mapsto \alpha(g^{-1},x)^{-1}$, see \cref{prop:SubgroupCoupling}.}, while $(X_H,\mu)$ is the utralimit of the sequence of probability spaces $(\mathcal{G}_n,\PP_{\mathcal{G}_n})$. The precise construction is explained in \S \ref{sec:approxCocycle}. 

We now address the case of a measure coupling, obtained from a sequence of maps   $\uu_n\colon \mathcal{G}_n\to \mathcal{H}_n$. Roughly the idea is analogous to the previous situation: we exploit the fact that $\mathcal{H}_n$ is a sofic approximation of $H$ to view the maps $\uu_n$ as a family of maps from $B_G(e_G,r)$ to $H$ (for every $r$ and as $n$ tends to infinity). But the details here are trickier (see \cref{prop:approximatecocycle} for a precise formulation).

In the following theorem, we specify an important property of the resulting coupling: {\bf coboundedness}.  An ME coupling from $G$ to $H$ is  cobounded if up to measure zero the fundamental domain of $G$ is contained in finitely many translates of the fundamental domain of $H$. The coupling is called mutually cobounded if it is cobounded both from $G$ to $H$, and from $H$ to $G$. For instance an OE coupling is mutually cobounded.   We will come back to this condition in the next subsection. Let us simply mention that coboundedness is somehow independent of integrability conditions. Indeed, although an integrability condition for the cocycle from $G$ to $H$ relies on a specific choice of fundamental domain for $H$, it does not depend on how the two fundamental domains behave with respect to each other.

\begin{theorem}\label{thm:ME}
Assume that  the sequence of maps $\uu_n\colon \mathcal{G}_n\to \mathcal{H}_n$ is $\mathcal U$-statistically a coarse equivalence. Then there exists an ME coupling from $G$ to $H$. Moreover,
\begin{itemize}
\item if there exists $C>0$ such that every $\uu_n$ has $C$-dense image, then the coupling is cobounded from $G$ to $H$; 
\item  if there exists $C>0$ such that every $\uu_n$ has point-preimages of diameter at most $C$, then the coupling is cobounded from $H$ to $G$;
\item if the maps $\uu_n$ are bijective, then the coupling is an OE coupling.
 \end{itemize}
\end{theorem}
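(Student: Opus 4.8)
The plan is to build the coupling as an ultralimit of the two sofic approximations and then read each of the three refinements off a single metric feature of the maps $\uu_n$. Following the construction sketched in \S\ref{sec:approxCocycle}, I would take $(X_H,\mu)$ to be the ultralimit of the probability spaces $(\mathcal{G}_n,\PP_{\mathcal{G}_n})$ and, symmetrically, $(X_G,\mu)$ to be the ultralimit of $(\mathcal{H}_n,\PP_{\mathcal{H}_n})$; both are standard probability spaces, and the two-sided control in the coarse-equivalence hypothesis forces $\#\mathcal{H}_n\asymp\#\mathcal{G}_n$, so $X_G$ has finite $\mu$-measure, which is exactly what upgrades a measure subgroup coupling to an ME coupling. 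Since $\uu_n$ is $\mathcal U$-statistically a coarse equivalence, it admits maps $\vv_n\colon\mathcal{H}_n\to\mathcal{G}_n$ that are simultaneously $\mathcal U$-statistical coarse inverses of $\uu_n$, genuine sections over $\Im\uu_n$, and nearest-image points off $\Im\uu_n$; these $\vv_n$ are again $\mathcal U$-statistically coarse equivalences. Feeding $\uu_n$ into \cref{prop:approximatecocycle} yields a cocycle $\alpha\colon G\times X_H\to H$, feeding $\vv_n$ into it yields $\beta\colon H\times X_G\to G$, and gluing these over $\Omega=X_H\times H=X_G\times G$ (with the product of the ultralimit measure and counting measure) produces a standard space carrying commuting free p.m.p.\ actions of $G$ and $H$ whose fundamental domains are $X_H$ and $X_G$. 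This is the asserted ME coupling; its construction is to \cref{prop:approximatecocycle} what \cref{thm:Subgroup} is to its subgroup counterpart.

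For coboundedness from $G$ to $H$ I would locate a typical point of $X_G$ inside the $H$-tiling $\Omega=\bigsqcup_{h\in H}hX_H$. Representing $x\in X_G$ by $(y_n)$ with $y_n\in\mathcal{H}_n$, its transfer into $X_H$-coordinates is governed by $\vv_n(y_n)\in\mathcal{G}_n$ together with the residual $H$-displacement between $y_n$ and $\uu_n(\vv_n(y_n))$: the $H$-coordinate of $x$ is the $\mathcal U$-limit of the element $h_n\in H$ with $\uu_n(\vv_n(y_n))\,h_n=y_n$, read off through the right almost-action in the sofic graph $\mathcal{H}_n$. If every $\uu_n$ has $C$-dense image then $|h_n|_{S_H}=d(\uu_n(\vv_n(y_n)),y_n)\le C$ (it is $0$ when $y_n\in\Im\uu_n$ and at most $C$ otherwise), so $x$ lies in one of the finitely many translates $hX_H$ with $|h|_{S_H}\le C$; hence $X_G\subseteq\bigcup_{|h|_{S_H}\le C}hX_H$ up to a null set. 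Coboundedness from $H$ to $G$ is the mirror computation: a point of $X_H$ represented by $(x_n)$, $x_n\in\mathcal{G}_n$, has $G$-coordinate equal to the $\mathcal U$-limit of the displacement between $x_n$ and $\vv_n(\uu_n(x_n))$, and since both of these lie in the single preimage $\uu_n^{-1}(\uu_n(x_n))$ (here the section property of $\vv_n$ is used), a uniform bound $\diam\uu_n^{-1}(\uu_n(x_n))\le C$ confines that displacement to $B_G(e_G,C)$, giving $X_H\subseteq\bigcup_{|g|_{S_G}\le C}gX_G$ up to a null set.

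For the last item, if every $\uu_n$ is bijective then both preceding hypotheses hold with $C=0$: point-preimages are singletons and the image is all of $\mathcal{H}_n$, so the two displacement terms above are trivial and $X_G=X_H$ up to a null set. Moreover a bijection between finite sets carries the uniform measure to the uniform measure, so the induced identification of the ultralimits $X_G\cong X_H$ is measure-preserving; thus the coupling is OE.

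The step I expect to be the main obstacle is the passage from the $\mathcal U$-statistical coarse-equivalence hypothesis to honest, almost-everywhere-defined cocycles on the ultralimit, together with the verification that the two ultralimit measures glue into a single $\sigma$-finite measure on $\Omega$ for which $X_H$ and $X_G$ are bona fide finite-measure fundamental domains. This is precisely where soficity of $\mathcal{H}_n$ enters, since it is what lets one interpret the $H$-valued displacements $\uu_n(x_n)^{-1}\uu_n(x_n g)$ as genuine group elements with high probability, and it is the content packaged by \cref{prop:approximatecocycle}. Granting that, the three refinements are comparatively soft: the only real care is the bookkeeping identifying which translate a point of $X_G$ (resp.\ $X_H$) occupies, and checking that the \emph{uniform} constant $C$ makes the relevant index set $\{h:|h|_{S_H}\le C\}$ (resp.\ $\{g:|g|_{S_G}\le C\}$) finite.
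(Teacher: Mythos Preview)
Your outline has the right intuition but a genuine structural gap, and the paper's route is not the one you sketch.

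The gap is exactly where you flag it: the ``gluing'' $\Omega=X_H\times H=X_G\times G$. You build $X_H$ as the Loeb ultralimit of $(\mathcal G_n,\PP_{\mathcal G_n})$ and $X_G$ as the Loeb ultralimit of $(\mathcal H_n,\PP_{\mathcal H_n})$, and then assert these assemble into a single coupling space. But these are two genuinely different probability spaces; the maps $\uu_n,\vv_n$ are not bijections (only statistically coarsely bijective), so there is no canonical measure-preserving identification between $[\mathcal G_n]_{\mathcal U}\times H$ and $[\mathcal H_n]_{\mathcal U}\times G$. Producing one amounts to exhibiting the coupling you are trying to construct. Also, Loeb spaces are \emph{not} standard Borel in general, so your claim ``both are standard probability spaces'' is false and you would not yet have an ME coupling in the usual sense even if the gluing went through.

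The paper sidesteps both issues. It never takes the ultralimit of $\mathcal H_n$. Instead it fixes in advance a single \emph{standard} topological space $\Omega=(H\times\N)^G$ carrying commuting $G\times H$ actions by construction (\S\ref{sec:couplingspace}), takes only the one Loeb space $\overline\Omega_0=[\mathcal G_n]_{\mathcal U}$ to manufacture the cocycle (\S\ref{section:non-standard}), and then pushes the non-standard measure forward to $\Omega$ via an explicit equivariant map $J$ (\S\ref{sec:measure}). The $H$-fundamental domain is the obvious slice $\Omega_0$; the $G$-fundamental domain $X_G=\bigsqcup_i Y_i$ is built by hand inside $\Omega$ (\cref{prop:X_H}), and its finite measure is computed directly from the $\mathcal U$-statistical expansivity of $\uu_n$ (\cref{prop:fundDom}). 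No symmetric construction with $\vv_n$ is used. The coboundedness and OE refinements are then read off this explicit $X_G$ in \cref{prop:fundamentalDomainBis}, by estimating $\mu(Y_i)$ in terms of $\uu_n$; in particular, $C$-dense image kills all $Y_i$ with $|h_i|>C$, and the bounded-preimage hypothesis forces $X_H\subset B_G(e_G,C)\ast X_G$ a.e. Your heuristic for these refinements is morally right, but it rests on the unestablished identification of $X_G$ with $[\mathcal H_n]_{\mathcal U}$, which the paper neither needs nor proves.
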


We now turn to integrability conditions. Let us go back to our heuristic description of how the cocycle $\alpha$ is obtained from our sequence of maps $\uu_n$.  In order to guess the connection between properties of $\uu_n$ and integrability of the cocyle $\alpha$, we can follow the rough description given above. Note that requiring that the cocycle $\alpha$ is $L^{\infty}$ implies that the family of maps $\alpha(\cdot,x):G\to H$ are uniformly Lipschitz\footnote{for right-invariant word metrics on both groups.}. By extension, imposing an integrability condition on $\alpha$ can be viewed as a way to tame the defect of being Lipschitz for this family of maps. It is therefore expected to be obtained from an analogous summability condition on the sequence  of its approximations. Precisely, we get the following statement.

\begin{theorem}\label{thm:quantSubgroup}
	Let $\varphi\colon \R^+\to \R^+$ be an unbounded non-decreasing map. Assume the maps $\uu_n\colon \mathcal{G}_n\to H$ are injective and satisfy the following:
 for every $s\in S_G$ there exists $\delta>0$ such that
\begin{equation}\label{eq:quantLip}
	\sum_{r=0}^\infty \varphi\!\left(\delta r\right) \lim_\mathcal U\PP_{\mathcal{G}_n}\left(\left\{x\in \mathcal{G}_n\mid d_{\mathcal{H}_n}(\uu_n(x),\uu_n(xs))=r\right\}\right)<\infty;
	\end{equation}
	Then, there exists an injective measure subgroup coupling from $G$ to $H$ which is $\varphi$-integrable. 
\end{theorem}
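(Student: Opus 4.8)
The plan is to feed the hypothesis into the qualitative statement \cref{thm:Subgroup} and then read off the $\varphi$-integrability of the resulting cocycle directly from the ultralimit construction. First I would check that we are in the setting of \cref{thm:Subgroup}: the maps are injective by assumption, and the convergence of the series in \eqref{eq:quantLip}, together with the fact that $\varphi$ is unbounded and non-decreasing, controls the limiting distribution of the displacements $x\mapsto |\uu_n(x)^{-1}\uu_n(xs)|_{S_H}$ and in particular guarantees that no displacement mass escapes to infinity in the ultralimit; this is exactly the $\mathcal U$-statistical Lipschitz condition of \cref{def:statisticCoarseMetric}. Hence \cref{thm:Subgroup} provides an injective measure subgroup coupling $(\Omega,\mu)$ from $G$ to $H$, whose fundamental domain $(X_H,\mu)$ is the ultralimit of the probability spaces $(\mathcal{G}_n,\PP_{\mathcal{G}_n})$ and whose cocycle $\alpha$ is built, as recalled in \S\ref{sec:approxCocycle} and \cref{prop:SubgroupCoupling}, as the ultralimit of the maps $\uu_n$. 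It then remains only to establish the integrability bound of \cref{defIntro:QuantME}.

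Fix $s\in S_G$ and the associated $\delta>0$. The heart of the argument is the identification, valid for $\mu$-a.e.\ $x=[(x_n)]\in X_H$, of the word length $|\alpha(s,x)|_{S_H}$ with the ultralimit $\lim_{\mathcal U}|\uu_n(x_n)^{-1}\uu_n(x_n s)|_{S_H}$; this is precisely what the construction of $\alpha$ in \cref{prop:SubgroupCoupling} delivers (up to passing to inverses, which leaves the word length unchanged since $S_H$ is symmetric). Because this ultralimit is, a.e., a finite integer, an integer-valued sequence converging along $\mathcal U$ eventually attains its limit, so for each fixed $r\geq 0$ the level set $\{x\in X_H : |\alpha(s,x)|_{S_H}=r\}$ coincides, up to a $\mu$-null set, with the internal set associated to the sequence $\{x\in\mathcal{G}_n : |\uu_n(x)^{-1}\uu_n(xs)|_{S_H}=r\}$. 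By the very definition of the ultralimit (Loeb) measure its $\mu$-measure is therefore $p_r := \lim_{\mathcal U}\PP_{\mathcal{G}_n}\big(\{x : |\uu_n(x)^{-1}\uu_n(xs)|_{S_H}=r\}\big)$. Since $\alpha(s,x)\in H$ is finite almost everywhere, these level sets partition $X_H$ up to measure zero, and as $\varphi(\delta|\alpha(s,\cdot)|_{S_H})$ is constant equal to $\varphi(\delta r)$ on the $r$-th level set, integrating over this partition yields
\[
\int_{X_H}\varphi\!\left(\delta|\alpha(s,x)|_{S_H}\right)\,d\mu(x) \;=\; \sum_{r=0}^{\infty}\varphi(\delta r)\,p_r \;<\;\infty,
\]
the finiteness being exactly \eqref{eq:quantLip}. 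Running this for every $s\in S_G$ shows that the coupling is $\varphi$-integrable, while the injectivity of the cocycle is inherited from \cref{thm:Subgroup}; this proves the theorem.

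The main obstacle is the measure-theoretic identification underpinning the displayed equality. One must justify both that $|\alpha(s,\cdot)|_{S_H}$ is finite $\mu$-a.e.\ — i.e.\ that the cocycle is genuinely $H$-valued on a full-measure set, which is where the $\mathcal U$-statistical Lipschitz hypothesis (and hence the unboundedness of $\varphi$ in \eqref{eq:quantLip}) is really used — and that the ultralimit measure of each level set is computed by the limit $p_r$ of the corresponding finite measures. The virtue of organizing the computation as a sum over level sets is that it handles a single value $r$ at a time and so never requires interchanging the infinite sum over $r$ with the ultralimit over $n$, an exchange that is \emph{not} valid in general; all the analytic content is thereby concentrated in the per-level identification $\mu(\{|\alpha(s,\cdot)|_{S_H}=r\})=p_r$, which is exactly the kind of statement the Loeb-measure machinery of \S\ref{sec:approxCocycle} is designed to provide.
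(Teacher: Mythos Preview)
Your proposal is correct and follows essentially the same route as the paper: first deduce from \eqref{eq:quantLip} and the unboundedness of $\varphi$ that $(\uu_n)$ is $\mathcal U$-statistically Lipschitz (the paper does this by an explicit contradiction argument in \S\ref{sec:Proofs}), apply \cref{thm:Subgroup}/\cref{prop:SubgroupCoupling} to get the coupling, and then identify the integral $\int_{X_H}\varphi(\delta|\alpha(s,x)|)\,d\mu$ with the series $\sum_r\varphi(\delta r)p_r$ via the Loeb-measure computation (this is exactly \cref{prop:QuantSubgroup}). The only cosmetic discrepancy is that the paper's formula $\alpha(g,\aaa)=\aaa_H(g^{-1})^{-1}$ gives $|\alpha(s,x)|=\lim_{\mathcal U}d_H(\uu_n(x_n),\uu_n(x_ns^{-1}))$ rather than with $s$, which is harmless once \eqref{eq:quantLip} is assumed for all generators.
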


We now turn to the ME case.  
In order to obtain integrability conditions on the inverse cocycle, the ideal situation is when the $\uu_n$ are bijections, as we can simply treat $\beta$ as we did for $\alpha$ but replacing $\uu_n$ by their inverses. By contrast with our qualitative statement \cref{thm:ME}, we shall impose that our maps are not far from being bijective by assuming a uniform bound on the diameter of point-preimages and on the coarse density of the image. 
\begin{theorem}\label{thm:quantME}
	Let $\varphi,\psi\colon \R^+\to \R^+$ be two unbounded non-decreasing maps. Let $G$ and $H$ be two finitely generated groups with sofic approximations $(\mathcal{G}_n)_n$ and $(\mathcal{H}_n)_n$ and $\uu_n\colon \mathcal{G}_n\to \mathcal{H}_n$ satisfy the following:
	\begin{enumerate}
	\item There exists $C>0$ such that every $\uu_n$ has $C$-dense image and has point-preimages of diameter at most $C$;
	\item For every $s\in S_G$ there exists $\delta>0$ such that (\ref{eq:quantLip}) holds.
	\item For every $h\in H$ of length at most $2C+1$, there exists $\delta>0$ such that
		\begin{equation}\label{eq:quantExp}
	\sum_{r=0}^\infty \psi\!\left(\delta r\right)  \lim_\mathcal U \PP_{\mathcal{H}_n}\left(\left\{y \in \uu_n(\mathcal{G}_n) \mid  \diam_{\mathcal{G}_n}(\uu_n^{-1}(y)\cup \uu_n^{-1}(yh))=r\right\}\right)<\infty.
		\end{equation}
	\end{enumerate}
	Then, there exists a  measure equivalence coupling from $G$ to $H$ which  is $(\varphi,\psi)$-integrable and mutually cobounded (OE if the maps $\uu_n$ are bijective).\end{theorem}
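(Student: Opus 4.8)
The plan is to combine the qualitative existence result \cref{thm:ME} with two quantitative integrability estimates, one for each cocycle. First I would check that hypotheses (1)--(3) supply the hypotheses of \cref{thm:ME}, namely that $\uu_n$ is $\mathcal U$-statistically a coarse equivalence: the $C$-density of the image and the uniform bound on the diameter of point-preimages in (1) give coboundedness and coarse injectivity, while the summability conditions (\ref{eq:quantLip}) and (\ref{eq:quantExp}), since $\varphi$ and $\psi$ are unbounded, force the forward and backward increments to be statistically bounded, i.e.\ the statistical coarse-Lipschitz controls in both directions. Thus \cref{thm:ME} produces an ME coupling $(\Omega,\mu)$ from $G$ to $H$ with fundamental domains $X_G$, $X_H$ and cocycles $\alpha$, $\beta$; the same theorem already yields mutual coboundedness (from the two halves of (1)) and the OE conclusion when the $\uu_n$ are bijective. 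It remains only to establish the integrability of $\alpha$ and of $\beta$.

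For the $\varphi$-integrability of $\alpha\colon G\times X_H\to H$ I would argue exactly as in \cref{thm:quantSubgroup}. Under the ultralimit construction of \cref{prop:approximatecocycle}, for $\mathcal U$-almost every sequence $(x_n)$ representing $x$ the length $|\alpha(s,x)|_{S_H}$ equals $\lim_\mathcal U d_{\mathcal{H}_n}(\uu_n(x_n),\uu_n(x_n s))$. Since $(X_H,\mu)$ is the ultralimit of the probability spaces $(\mathcal{G}_n,\PP_{\mathcal{G}_n})$ and $\varphi$ is non-decreasing, pushing the integral to this ultralimit bounds $\int_{X_H}\varphi(\delta|\alpha(s,x)|_{S_H})\,d\mu(x)$ by the left-hand side of (\ref{eq:quantLip}), which is finite by (2). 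This gives $\varphi$-integrability for every $s\in S_G$.

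The main work, and the step I expect to be the principal obstacle, is the $\psi$-integrability of $\beta\colon H\times X_G\to G$, since the $\uu_n$ are not assumed bijective and there is no literal inverse to feed into the previous argument. I would introduce coarse inverses $\vv_n\colon \mathcal{H}_n\to \mathcal{G}_n$, choosing for each $y$ some $\vv_n(y)\in \uu_n^{-1}(B_{\mathcal{H}_n}(y,C))$, which exists since the image is $C$-dense and is well-defined up to an error of order $C$ since point-preimages have diameter at most $C$. The key observation is that a unit increment of $\beta$ corresponds, in the $\mathcal{H}_n$-picture, to comparing $\vv_n(y)$ and $\vv_n(ys')$ for a generator $s'\in S_H$: writing $\uu_n(\vv_n(y))=z$ and $\uu_n(\vv_n(ys'))=z'$ one has $d_{\mathcal{H}_n}(z,z')\le 2C+1$, so $z'=zh$ with $|h|_{S_H}\le 2C+1$, whence $d_{\mathcal{G}_n}(\vv_n(y),\vv_n(ys'))\le \diam_{\mathcal{G}_n}(\uu_n^{-1}(z)\cup \uu_n^{-1}(zh))$. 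This is precisely the quantity controlled by (\ref{eq:quantExp}).

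To finish the $\beta$-estimate I would note that only finitely many $h\in H$ with $|h|_{S_H}\le 2C+1$ can occur; applying (3) to each and summing the finitely many convergent series shows $\int_{X_G}\psi(\delta'|\beta(s',x)|_{S_G})\,d\mu(x)<\infty$ for every $s'\in S_H$, after transporting the integral over $X_G$ to the ultralimit of the $\PP_{\mathcal{H}_n}$ and using monotonicity of $\psi$. The genuine difficulty is the bookkeeping: one must verify that the abstract cocycle $\beta$ delivered by \cref{prop:approximatecocycle} coincides, up to the bounded ambiguity in the choice of $\vv_n$ (absorbed into the constant $\delta'$), with the combinatorial increments of these coarse inverses, and that passing to the finite set of correction elements $h$ does not destroy summability. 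Once this matching is in place, combining the two integrability statements with the coboundedness already furnished by \cref{thm:ME} yields the desired $(\varphi,\psi)$-integrable, mutually cobounded ME coupling, OE in the bijective case.
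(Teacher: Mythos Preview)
Your overall plan and the treatment of the forward cocycle $\alpha$ match the paper: you first verify that (\ref{eq:quantLip}) and (\ref{eq:quantExp}) force the sequence $(\uu_n)$ to be $\mathcal U$-statistically Lipschitz and $\mathcal U$-statistically expansive (this is exactly what the paper does in \S\ref{sec:Proofs}, via \cref{prop:StatBiLGroups} and \cref{prop:expsofic}), then invoke the existence result, and then the $\varphi$-integrability of $\alpha$ is \cref{prop:QuantSubgroup}.

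Where you diverge, and where there is a genuine gap, is the $\psi$-integrability of $\beta$. You propose to introduce coarse inverses $\vv_n\colon\mathcal H_n\to\mathcal G_n$ and to identify $|\beta(s',\cdot)|$ with increments of $\vv_n$, then ``transport the integral over $X_G$ to the ultralimit of the $\PP_{\mathcal{H}_n}$''. Two issues: first, $\beta$ is \emph{not} delivered by \cref{prop:approximatecocycle}; that proposition produces the almost-cocycle $T_n$ underlying $\alpha$. The cocycle $\beta$ is defined only after one has built the explicit fundamental domain $X_G=\bigsqcup_i Y_i$ of \cref{prop:X_H}, and there is no a priori identification of $\beta$ with increments of any $\vv_n$. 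Second, $(X_G,\mu)$ is not the ultralimit of $(\mathcal H_n,\PP_{\mathcal H_n})$; the whole coupling lives over the ultralimit of $(\mathcal G_n,\PP_{\mathcal G_n})$, and $X_G$ is an explicit Borel subset of $\Omega''$. So the ``matching'' step you flag is not bookkeeping: it is the heart of the argument, and your sketch does not indicate how to carry it out.

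The paper avoids coarse inverses entirely. In \cref{prop:Cocycle} it uses \cref{prop:fundamentalDomainBis} to get $X_G\subset\bigsqcup_{|h_i|\le C} Y_i$ up to null sets, then writes, for $\aaa\in Y_j$ with $\beta(h,\aaa)=g$, the explicit condition $\aaa(e_G)=(h_j,0)$ and $\aaa(g^{-1})=(h^{-1}h_i,0)$. Translating via \S\ref{sec:BasicSubsets} to the level of $\mathcal G_n$ gives the condition $\uu_n(xg^{-1})=\uu_n(x)h_j^{-1}h^{-1}h_i$ with $\rho_n(x)=0$; summing over $|g|=r$ and using injectivity of $\uu_n$ on $\{\rho_n=0\}$ bounds the contribution by $\PP_{\mathcal H_n}\bigl(\{y:\diam(\uu_n^{-1}(y)\cup\uu_n^{-1}(yh_{ij}))\ge r\}\bigr)$, after which the finite sum over $i,j$ (with $|h_{ij}|\le 2C+1$ when $h\in S_H$) reduces to (\ref{eq:quantExp}). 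This is where the restriction ``$|h|\le 2C+1$'' in hypothesis (3) actually enters, and why the preimage-diameter bound is needed to turn ``$\ge r$'' into ``$=r$'' as in (\ref{eq:UQuantExpC}). Your outline reaches the same quantity $\diam(\uu_n^{-1}(z)\cup\uu_n^{-1}(zh))$, but through an unjustified identification rather than through the structure of $X_G$.
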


To understand the formula (\ref{eq:quantExp}) above, observe that when the map $\uu_n$ is not invertible, a natural substitute for $d_{\mathcal{G}_n}(\uu_n^{-1}(y),\uu_n^{-1}(yh))$ is  $\diam_{\mathcal{G}_n}(\uu_n^{-1}(y)\cup \uu_n^{-1}(yh))$.

\subsection{An application}
We now present an application to a family of solvable groups which has been intensively studied from the point of view of quasi-isometries.  Denote by $\mathcal M$ the following class of groups:
\begin{itemize}
\item for $k\geq 2$,  the lamplighter group $\Lamp_k=(\Z/k\Z)\wr\Z$;
\item for $k\geq 2$,  Baumslag-Solitar's group $\operatorname{BS}(1,k)=\Z[1/k]\rtimes \Z$, where $n\in \Z$ acts by multiplication by $k^n$; 
\item $\SOL_A=\Z^2\rtimes_A\Z$, where $A\in \SL_2(\Z)$ has a real eigenvalue $\lambda>1$.
\end{itemize}

These groups share important geometric properties. For instance, all groups from $\mathcal M$ have isometric asymptotic cones \cite{Cornulier_dimcone}, and same isoperimetric profile \cite{coulhonGeometricApproachOndiagonal2001} (see also \cite{Tes-16} for a common description as uniform lattices in $\SOL$-type groups). 

On the other hand, they are also known for their strong quasi-isometric rigidity features. For instance that $\Lamp_k$ is quasi-isometric to $\Lamp_{k'}$ if and only if $k$ and $k'$ are powers of some common integer \cite{EFWII}. The same statement holds for $\operatorname{BS}(1,k)$  and  $\operatorname{BS}(1,k')$ \cite{MR1608595}. By contrast, $\SOL_A$ is a uniform lattice in the group $\R^2\rtimes \R$, where $\R$ acts by a diagonal matrix with coefficients $(e^t,e^{-t})$, hence the quasi-isometry class of $\SOL_A$ does not depend on $A$. Moreover, groups from two of these three classes cannot be quasi-isometric: this follows for instance from the fact that $\SOL$, $\operatorname{BS}$ and $\Lamp$ have respectively asymptotic dimension $3$, $2$ and $1$. Another criterion is finite presentability: while $\operatorname{BS}$ and $\SOL$ are finitely presented, $\Lamp$ is not.

All these properties make the class $\mathcal M$ particularly interesting to study from the quantitative ME point of view. 
In fact, these rigidity results can be interpreted in terms of quantitative ME. Indeed, by \cite{Sau-06,Sha-04}, any two finitely generated amenable groups are quasi-isometric if and only if they admit an $L^{\infty}$ ME coupling which is mutually cobounded.

We recall that $\varphi$-integrability of a cocycle becomes more restrictive as $\varphi$ goes faster to infinity. In particular, exponential integrability is stronger than being $L^p$ for all $p<\infty$. Contrary to $L^p$ ME for $p\geq 1$, exponentially integrable ME does not behave well under composition of couplings, and therefore may not define an equivalence relation. In \cite{DKLMT-22}, the following stronger property is introduced to deal with this defect.

\begin{definition}
Let $G$ and $H$ be two finitely generated groups, 
$X$ be a finite measure space equipped with a measure preserving action of $G$, and  $c\colon G\times X\to H$ be a cocycle. The cocycle $c$ is strongly exponentially integrable (denoted $\exp^\diamond$-integrable), if for every $\varepsilon>0$ there exists a $\delta>0$ and $C>0$ such that
	\[\int_{X}\exp\left(\delta|c(g,x)|\right)d\mu(x) \le C\exp(\varepsilon |g|)\]
	for every $g\in G$.
\end{definition}
The main interest of this notion lies in the fact that being (mutually coboundedly) strongly exponentially ME is an equivalence relation\footnote{Mutual coboundedness is not addressed in this reference, but it is easy to check that the composition of couplings which is described there do preserve this condition.} among finitely generated groups \cite[Proposition 2.29.]{DKLMT-22}.

 In \cite[Theorem 8.1.]{DKLMT-22} an explicit OE coupling is constructed from $\Lamp_k$ to $\operatorname{BS}(1,k)$ which is $(L^\infty,\exp^\diamond)$. This construction strongly relies on the fact that the parameter $k$ is the same for both groups: it is a priori much harder to construct a coupling between $\Lamp_2$ and $\Lamp_3$ for instance. 
Here we use a variant of \cref{thm:quantME} to construct a mutually cobounded $(L^{\infty},\exp^\diamond)$ ME coupling from $\Lamp_k$ to $\SOL_A$ (see \cref{thm:example}). As a corollary, we obtain the following theorem, which contrasts with the situation for quasi-isometries: 

\begin{theorem}\label{thm:SolvableExpME}
There exists a mutually cobounded $(\exp^\diamond,\exp^\diamond)$-integrable measure equivalence coupling between any two groups in $\mathcal M$.
\end{theorem}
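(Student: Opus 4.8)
The plan is to prove \cref{thm:SolvableExpME} by establishing a single ``hub'' coupling and then leveraging the equivalence-relation structure provided by strong exponential integrability. Concretely, I would first prove the key building block announced in the introduction (\cref{thm:example}): a mutually cobounded $(L^\infty,\exp^\diamond)$-integrable ME coupling from $\Lamp_k$ to $\SOL_A$, for every $k\ge 2$ and every admissible $A\in\SL_2(\Z)$. This is the geometric heart of the argument and where I expect the real work to lie. The strategy is to apply the (variant of the) machinery of \cref{thm:quantME}: I must produce sofic approximations $(\mathcal G_n)$ of $\Lamp_k$ and $(\mathcal H_n)$ of $\SOL_A$ together with maps $\uu_n\colon\mathcal G_n\to\mathcal H_n$ that are $\mathcal U$-statistically coarse equivalences, have uniformly $C$-dense image and uniformly bounded point-preimages, and whose distortion satisfies the summability conditions \eqref{eq:quantLip} and \eqref{eq:quantExp} with $\varphi$ taken to be $L^\infty$ (a boundedness condition on the displacement $d_{\mathcal H_n}(\uu_n(x),\uu_n(xs))$) and $\psi$ taken to be an exponential, upgraded to the strong form $\exp^\diamond$.

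The geometric input for constructing the $\uu_n$ comes from the common description of all groups in $\mathcal M$ as uniform lattices in $\SOL$-type (Lie or algebraic) groups, as recorded in the citations to \cite{Cornulier_dimcone,Tes-16}. I would build the sofic approximations as explicit finite ``boxes'': for $\SOL_A$ one takes finite grids in $\Z^2\rtimes_A\{-N,\dots,N\}$, while for $\Lamp_k$ one takes the natural F\o lner-type tiles $(\Z/k\Z)^{[-N,N]}\rtimes[-N,N]$. The map $\uu_n$ should send a lamp configuration together with a cursor position to the corresponding point of the $\SOL$-box by reading the lamp configuration as a base-$k$ expansion along the expanding/contracting horocyclic directions; this is precisely the sort of comparison that underlies the quasi-isometric classification results and the explicit coupling from $\Lamp_k$ to $\operatorname{BS}(1,k)$ in \cite[Theorem 8.1.]{DKLMT-22}. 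The crucial estimate is that changing the cursor position by a single generator of $\Lamp_k$ moves the image by a bounded amount (giving the $L^\infty$ side), whereas moving by a generator of $\SOL_A$ may require changing logarithmically many lamps, but with the probability of a displacement of size $r$ decaying like $k^{-r}$; this geometric decay is exactly what makes the exponential sum in \eqref{eq:quantExp} converge and yields $\exp^\diamond$-integrability after the refinement to the strong form. I would need to verify the uniform density and bounded-fiber hypotheses, which follow from the boxes being genuine bi-Lipschitz-comparable tilings at the relevant scale.

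Once the hub coupling $\Lamp_k\leftrightarrow\SOL_A$ is in place, the theorem follows by combining three observations. First, since the quasi-isometry class of $\SOL_A$ is independent of $A$ and all these groups are amenable, the rigidity/realization equivalence of \cite{Sau-06,Sha-04} gives mutually cobounded $L^\infty$ ME couplings between the various $\SOL_A$; in particular $\SOL_A$ and $\SOL_{A'}$ are mutually coboundedly $(\exp^\diamond,\exp^\diamond)$-integrable (an $L^\infty$ coupling is a fortiori $\exp^\diamond$). Second, the same hub construction applied with $\operatorname{BS}(1,k)$ in place of $\Lamp_k$ (or composed with the existing $(L^\infty,\exp^\diamond)$ coupling $\Lamp_k\to\operatorname{BS}(1,k)$ from \cite[Theorem 8.1.]{DKLMT-22}) connects the Baumslag--Solitar groups to the same $\SOL$ hub. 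Third, and decisively, \cite[Proposition 2.29.]{DKLMT-22} asserts that mutual coboundedness together with strong exponential integrability defines an \emph{equivalence relation} on finitely generated groups, the composition of couplings preserving both $\exp^\diamond$-integrability and mutual coboundedness. Therefore, having exhibited every group of $\mathcal M$ as mutually coboundedly $(\exp^\diamond,\exp^\diamond)$-integrably equivalent to a common $\SOL$-type hub, transitivity and symmetry of this relation immediately yield such a coupling between any two members of $\mathcal M$, completing the proof.

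The main obstacle I anticipate is the quantitative distortion analysis for the maps $\uu_n$ between the lamplighter boxes and the $\SOL$ boxes, specifically checking \eqref{eq:quantExp} with the \emph{strong} exponential tail and not merely an $L^p$-for-all-$p$ tail: one must control not only that a $\SOL$-generator typically changes boundedly many lamps, but that the full distribution of preimage-diameters decays fast enough, uniformly in $n$ and along the ultrafilter $\mathcal U$, to survive the upgrade from $\exp$ to $\exp^\diamond$. A secondary subtlety is bookkeeping the mutual coboundedness through each composition; while \cite[Proposition 2.29.]{DKLMT-22} handles $\exp^\diamond$-integrability, the footnote in the excerpt indicates coboundedness must be tracked by hand, so I would verify that each elementary coupling in the chain (hub construction and the $\operatorname{BS}$ coupling of \cite{DKLMT-22}) is mutually cobounded and that composition preserves this, which is routine but must be stated explicitly.
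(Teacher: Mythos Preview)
Your proposal is correct and follows essentially the same architecture as the paper: prove \cref{thm:example} as the hub coupling via the sofic-approximation machinery, then invoke the equivalence-relation property of mutually cobounded $\exp^\diamond$-ME together with the $\Lamp_k\leftrightarrow\operatorname{BS}(1,k)$ coupling of \cite[Theorem~8.1]{DKLMT-22} and the common-lattice fact for the $\SOL_A$.

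Two small differences are worth flagging. First, the paper does not build the map directly into a finite grid of $\Z^2\rtimes_A[-N,N]$ as you suggest; instead it constructs the base-$k$ map into F\o lner boxes of the real group $\SOL_\R$ (where the eigendirections of $A$ give clean coordinates), and only afterwards perturbs at bounded distance to land in a cocompact copy of $\SOL_A$. This detour matters: the expanding/contracting directions of $A$ are generically irrational lines in $\R^2$, so your ``base-$k$ expansion along horocyclic directions'' does not naturally produce lattice points of $\Z^2$; you would rediscover the need to pass through $\SOL_\R$. Second, for the equivalence of the various $\SOL_A$ the paper simply uses that they are all uniform lattices in the same connected Lie group (the ambient group is the coupling), which is more direct than going through quasi-isometry plus \cite{Sau-06,Sha-04}.
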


Let us end this section with a problem. The previous theorem implies in particular that amenable Baumslag-Solitar groups $\operatorname{BS}(1,n)$, for $n\geq 2$ are $\exp^\diamond$-ME. 
What can be said of the class of {\it non-amenable} Baumslag-Solitar groups $\operatorname{BS}(m,n)$, with $1<m<n$? 
By contrast with the amenable case, Whyte has famously proved that any two groups from this class are quasi-isometric \cite{Whyte01}.
Recently, Gaboriau, Poulin, Tserunyan, Tucker-Drob, and Wrobel have announced a proof of the measurable analogue of this result, namely that any two groups from this class are ME. Can this be generalized to $L^p$ ME, or even $\exp^\diamond$-ME?

\subsection{Organisation}
In \S \ref{sec:PrelimME}, we recall some basic facts about ME and OE couplings, and their quantitative versions. Subgroup couplings are not addressed there, as the definitions given in the introduction are sufficient for our purpose. While the use of a (non-principal) utrafilter $\mathcal U$ is omnipresent in this paper, the only non-elementary statement we need about it is Loeb's construction of an ultralimit of probability spaces, which is recalled in \S \ref{sec:Loeb}.
In \S \ref{sec:StatisticCoarse} we introduce the concept of $\mathcal U$-statistical properties of a sequence of maps between finite metric spaces (equipped with the uniform probability measure). 
The main part of the paper is \S \ref{BoxSpaces}, where the various couplings are constructed (the plan of this section is explained there). 
In \S \ref{sec:integrability}, we treat the integrability properties of our couplings. In \S \ref{sec:Proofs}, we gather ingredients from the two previous sections to prove the main theorems of the introduction.   
Finally in \S \ref{sec:M}, we prove the theorems relative to the class $\mathcal M$.

\begin{ack} We thank Alessandro Carderi for helping us with the references on Loeb's theorem. We are grateful to him and François Le Maître for many useful conversations around this 
	project. We are indepted to Corentin Correia for his careful reading, and we thank Claudio Llosa Isenrich, and Matthieu Joseph for their useful remarks.
\end{ack}

\section{Preminaries}
\subsection{Variations on (quantitative) measure and orbit equivalence}\label{sec:PrelimME}
We begin by recalling the notion of measure equivalence. 
For the sake of clarity, we first recall some terminology following \cite{MBS}. A \emph{standard Borel measure space} $(X,\mu)$ is a Borel space $(X,\mathcal{B}(X))$, frequently also known as a measurable space, with a measure $\mu$ on the $\sigma$-algebra $\mathcal{B}(X)$ given by the Borel $\sigma$-algebra of some Polish (separable and completely metrizable) topology on $X$. The elements of $\mathcal{B}(X)$ are called \emph{Borel}. A Borel subset $X_0 \subseteq X$ is \emph{conull} if $\mu(X \setminus X_0)=0$ and a property that holds for all $x \in X_0$ is said to hold \emph{almost everywhere} or \emph{a.e.} The space $(X,\mu)$ is said to be a standard Borel probability space if $\mu(X)=1$.

By a \emph{measure preserving action} of a finitely generated group $G$ on $(X,\mu)$, for short $G \curvearrowright (X,\mu)$, we mean that the action map $G \times X \rightarrow X$ given by $(g,x) \mapsto g\ast x$ is Borel and that $\mu(g\ast E) = \mu(E)$ for all $g \in G$ and all $E \in \mathcal{B}(X)$. A measure preserving action on a standard Borel probability space is said to be probability measure preserving. Finally, by a \emph{measure fundamental domain} for $G \curvearrowright (X,\mu)$ we mean a Borel set $X_G \subseteq X$ for which $\mu(g\ast X_G \cap X_G)= 0$ for all $g \in G \setminus \lbrace e_G\rbrace$ and $\mu(X \setminus G\ast X_G) = 0$. 
The definition of measure equivalence can now be stated as follows.
\begin{definition}
	Two infinite finitely generated groups $G$ and $H$ are said to be \emph{measure equivalent} if there exists a standard Borel measure space $(\Omega,\mu)$ of infinite measure with commuting measure preserving actions of $G$ and $H$ with measure fundamental domains of finite measure, that is, there exist Borel sets $X_G, X_H \subseteq \Omega$ of finite measure for which
	\begin{itemize}
		\item $\mu(g\ast X_G \cap X_G)= 0$ for all $g \in G \setminus \lbrace e_G\rbrace$ and $\mu(\Omega \setminus G\ast X_G) = 0$;
		\item $\mu(h\ast X_H \cap X_H)= 0$ for all $h \in H \setminus \lbrace e_H\rbrace$ and $\mu(\Omega \setminus H\ast X_H) = 0$.
	\end{itemize}
\end{definition}
The quadruple $(\Omega,X_G,X_H,\mu)$ is called a measure equivalence coupling from $G$ to $H$ and it is cobounded if the fundamental domains are contained in finitely many translates of each other.
\begin{remark}
	If $(\Omega,X_G,X_H,\mu)$ is a measure equivalence coupling and $\mu(X_G \triangle X_H)= 0$, then $X_G$ is a measure fundamental domain for $H$, and we can take $X_G$ as a common measure fundamental domain for both actions.
\end{remark}

To  quantify measure equivalence we need some terminology regarding integrability of \emph{cocycles}. Towards this, let $G$ be some finitely generated group with a measure preserving action on a standard Borel measure space $(X,\mu)$ with finite measure, and let $c \colon G \times X \rightarrow H$ be a Borel function into another finitely generated group $H$.
We say that $c \colon G \times X \rightarrow H$ is a \emph{Borel cocycle} if for each $g,g' \in G$, we have $c(gg',x)=c(g,g'\!.\hspace{1pt}x)c(g',x)$ for a.e. $x \in X$.
\begin{definition}
	Let $\varphi\colon \R^+\to \R^+$ be a non-decreasing maps, let $X$ be a finite measure space and let $G$ and $H$ be two groups with a cocycle $c\colon G\times X\to H$. The cocycle $c$ is $\varphi$-integrable if there exists a $\delta>0$ such that
	$$\int_{X_H}\varphi\left(\delta|c(g,x)|\right)d\mu(x)<+\infty$$
	for every $g\in G$ and $c$ is strongly $\varphi$-integrable (or $\varphi^\diamond$-integrable) if for every $\varepsilon>0$ there exists a $\delta>0$ and $C>0$ such that
	\[\int_{X_H}\varphi\left(\delta|c(g,x)|\right)d\mu(x) \le C\varphi(\varepsilon |g|)\]
	for every $g\in G$.
\end{definition}
We will write $\LL^p$-integrable when $\varphi(x)=x^p$ and write $\LL^\infty$ when $x\mapsto c(g,x)$ is essentially bounded for every $g\in G$.
Note that when $Y$ is of finite measure, then $\psi$-integrability implies $\varphi$-integrability, whenever there exists $K>0$ such that $\varphi(x)\le K\psi(K x)+K$.
\par

A measure equivalence $(\Omega,X_G,X_H,\mu)$ comes naturally equipped with two Borel cocycles given by its measure fundamental domains. These are given by the Borel maps $\alpha \colon G \times X_H \rightarrow H$ where for every $g \in G$, we have that $$g\ast x \in \alpha(g,x)^{-1}\ast X_H$$ for a.e. $x \in X_H$, and $\beta \colon H \times X_G \rightarrow G$ where for every $h \in H$, we have that $$h\ast y \in \beta(h,y)^{-1}\ast X_G$$ for a.e. $y \in X_G$. The map $\alpha \colon G \times X_H \rightarrow H$ is a Borel cocycle with respect to a measure preserving action $G \curvearrowright (X_H, \mu\vert_{X_H})$ where for every $g \in G$, we have that $$g \cdot x = \alpha(g,x)\ast g\ast x$$ for a.e. $x \in X_H$, and $\beta \colon H \times X_G \rightarrow G$ is a Borel cocycle with respect to a measure preserving action $H \curvearrowright (X_G, \mu\vert_{X_G})$ given by $$h \cdot y = \beta(h,y)\ast h\ast y$$ for a.e. $y \in X_G$.
The Borel cocycles defined in this way are called the \emph{measure equivalence cocycles}.
\begin{definition}
	Let $\varphi,\psi\colon \R^+\to \R^+$ be non-decreasing maps, let $G$ and $H$ be two groups with a measure equivalence coupling $(\Omega,X_G,X_H,\mu)$ from $G$ to $H$, let $\alpha\colon G\times X_H \to H$ be the cocycle such that $g*x \in \alpha(g,x)^{-1}X_H$ and let $\beta\colon H\times X_G \to G$ be the cocycle such that $h*x \in \beta(h,x)^{-1}X_G$.
	The measure equivalence coupling $(\Omega,X_G,X_H,\mu)$ from $G$ to $H$ is $(\varphi,\psi)$-integrable if $\alpha$ is $\varphi$-integrable and $\beta$ is $\psi$-integrable.
\end{definition}

We now turn our attention to orbit equivalence of groups, which implies measure equivalence. Towards this, we recall that a probability measure preserving action $G \curvearrowright (X,\nu)$ of a finitely generated group $G$ on a standard Borel probability space $(X,\nu)$ is \emph{essentially free} if there exists a $G$-invariant conull Borel subset of $X$ on which the action is free. The definition of orbit equivalence can now be stated as follows. 
\begin{definition}
	Two infinite finitely generated groups $G$ and $H$ are said to be \emph{orbit equivalent} if there exists a standard Borel probability space $(X,\nu)$ with essentially free probability measure preserving actions $G \curvearrowright (X,\nu)$ and $H \curvearrowright (X,\nu)$ and a conull invariant Borel subset $X_0 \subseteq X$ such that $G\ast x = H\ast x$ for all $x \in X_0$.
	The space $(X,\mu)$ is called an orbit equivalence from $G$ to $H$.
\end{definition}
Like measure equivalence, orbit equivalence defines two Borel cocycles, given by $\alpha' \colon G \times X \rightarrow H$ and $\beta' \colon H \times X \rightarrow G$, where for each $g \in G$, we have $g\ast x = \alpha'(g,x)\ast x$ for every $x \in X_0$, and for each $h \in H$, that $h\ast x = \beta'(h,x)\ast x$ for every $x \in X_0$. These are called the \emph{orbit equivalence cocycles}. 

As mentioned before, orbit equivalence implies measure equivalence. In fact, we have an equivalence, when we assume the fundamental domains in the measure equivalence coupling to be the same. In addition, the correspondent cocycles are the same.
\begin{proposition}[Folklore]
	Let $\varphi,\psi\colon \R^+\to \R^+$ be non-decreasing maps and let $G$ and $H$ be two infinite, finitely generated groups. There exists a measure equivalence coupling $(\Omega,X,X,\mu)$ from $G$ to $H$ that is $(\varphi,\psi)$-integrable if and only if there exists an orbit equivalence coupling $(X,\nu)$ from $G$ to $H$ that is $(\varphi,\psi)$-integrable.
\end{proposition}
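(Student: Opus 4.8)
The plan is to prove both implications by exhibiting an explicit correspondence between the two kinds of couplings under which the measure equivalence cocycles $\alpha,\beta$ and the orbit equivalence cocycles $\alpha',\beta'$ literally coincide. Once this is established, $(\varphi,\psi)$-integrability transfers for free, since the integrals defining it become the very same integrals over the very same probability space. The whole content therefore lies in constructing the couplings and matching the cocycles, and the key technical device in both directions will be the uniqueness of the representative of a given orbit inside a fundamental domain.

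For the implication from ME to OE, I would start from a coupling $(\Omega,X,X,\mu)$ with common fundamental domain $X$ of measure $1$ and set $\nu=\mu|_X$. The commuting free actions of $G$ and $H$ on $\Omega$ induce actions on $X$ by $g\cdot x=\alpha(g,x)\ast g\ast x$ and $h\cdot x=\beta(h,x)\ast h\ast x$, which are probability measure preserving and essentially free, freeness being inherited from $\Omega$ since a nontrivial stabiliser for an induced action would give a nontrivial stabiliser for the $\ast$-action. The heart of the argument is that these two actions have the same orbits: for $x\in X$ I would show that $G\cdot x$ equals the intersection of $X$ with the $(G\times H)$-orbit of $x$, and that the same holds for $H\cdot x$, so the two coincide. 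The first equality comes from the fact that each $g\ast x$ has a unique $H$-representative in $X$, which by commutativity of the actions is exactly $g\cdot x$, and the second is symmetric. Finally, writing $z=\alpha(g,x)\ast x$, both $g\ast z$ and $\beta(\alpha(g,x),x)\ast z$ lie in $X$ and in the same $G$-orbit, hence are equal by uniqueness of the representative; this yields $g\cdot x=\alpha(g,x)\cdot x$, i.e.\ $\alpha'=\alpha$, and symmetrically $\beta'=\beta$, so the integrability hypotheses are preserved.

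For the converse, given an OE coupling $(X,\nu)$ I would build $\Omega=X\times H$ equipped with $\nu$ times the counting measure on $H$, let $H$ act by left translation on the second coordinate, and let $G$ act by $g\ast(x,h_0)=(g\ast x,\,h_0\,\alpha'(g,x)^{-1})$. The cocycle identity for $\alpha'$ makes this a genuine $G$-action, it manifestly commutes with the $H$-action, and both are measure preserving and free. The set $X\times\{e_H\}$ is a fundamental domain for $H$; the crucial point is that it is also one for $G$. For this I would check that the $G$-translates are essentially disjoint, since if $g\ast(x,e_H)$ has second coordinate $e_H$ then $\alpha'(g,x)=e_H$, forcing $g\ast x=x$ and hence $g=e_G$ by freeness, and that they cover $\Omega$: given $(y,h)$, the point $x:=h\ast y$ lies in the common orbit of $y$, so some $g$ satisfies $g\ast x=y$, whereupon freeness of the $H$-action forces $\alpha'(g,x)^{-1}=h$, giving $g\ast(x,e_H)=(y,h)$. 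A direct computation of the resulting ME cocycle then gives $\alpha(g,(x,e_H))=\alpha'(g,x)$, and symmetrically for $\beta$, so again integrability is inherited verbatim.

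The main obstacle I expect is not the integrability bookkeeping, which is immediate once the cocycles are identified, but the careful verification that $X$, respectively $X\times\{e_H\}$, is a genuine fundamental domain for the second group. In both directions this rests on the same two ingredients: essential freeness of the actions and the coincidence of the $G$- and $H$-orbits, combined with uniqueness of orbit representatives in a fundamental domain. One should also keep track of the Borel measurability of all maps involved, and note that coboundedness is automatic throughout, since the two groups share a common fundamental domain.
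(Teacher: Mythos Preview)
Your OE $\Rightarrow$ ME direction is correct and is essentially the paper's argument with the roles of $G$ and $H$ swapped (the paper takes $\Omega=X\times G$ and lets $G$ act by left translation on the second factor, you take $\Omega=X\times H$; this is harmless by symmetry).

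In the ME $\Rightarrow$ OE direction, however, there is a genuine gap in your freeness argument. You write that essential freeness of the induced $G$- and $H$-actions on $X$ ``is inherited from $\Omega$ since a nontrivial stabiliser for an induced action would give a nontrivial stabiliser for the $\ast$-action.'' But if $g\cdot x=x$ with $g\neq e_G$, what you obtain is $(g,\alpha(g,x))\ast x=x$, i.e.\ a nontrivial stabiliser for the \emph{$G\times H$-action}, not for the $G$-action or the $H$-action alone. The definition of an ME coupling only guarantees that $G$ and $H$ each act essentially freely; it says nothing about the product. And this is not a phantom issue: for instance, take $G=H=\Z$ acting on $\Omega=\Z$ by translation with $X=\{0\}$; then $(1,-1)$ fixes every point, and the induced action of $G$ on $X$ is trivial.

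The paper confronts this explicitly: before passing to the induced actions, it replaces $(\Omega,\mu)$ by $(\Omega\times\mathcal B,\mu\times\mu')$, where $\mathcal B=\{0,1\}^{G\times H}$ carries the Bernoulli shift, so that the diagonal $G\times H$-action becomes essentially free while the cocycles (hence their integrability) are unchanged. Once you insert this step, the rest of your orbit-comparison and cocycle-identification argument goes through exactly as you wrote it.
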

\begin{proof}
	Let $(\Omega,X,X,\mu)$ be a measure equivalence coupling from $G$ to $H$. As previously, we write $\alpha \colon G \times X \rightarrow H$ and $\beta \colon H \times X \rightarrow G$ for the measure equivalence cocycles.
	
	First we need the action of $G\times H$ to be essentially free as well. In general this is not the case. Therefore we make the following construction.
	Let $\mathcal{B} = \lbrace 0,1\rbrace^{G \times H}$ and equip it with the shift action $G \times H \curvearrowright \mathcal{B}$. Considering each copy of $\lbrace 0, 1\rbrace$ with the equiprobability measure, we obtain a product measure $\mu'$ on $\mathcal{B}$ with respect to which the shift action is probability measure preserving and essentially free as the product group is infinite. Extending the action of $G$ and $H$ diagonally to $\Omega \times \mathcal{B}$ we now obtain a new measure equivalence $(\Omega \times \mathcal{B}, X \times \mathcal{B}, X \times \mathcal{B}, \mu \times \mu')$ for $G$ and $H$ and where the action of $G\times H$ is essentially free. Its measure equivalence cocycles are given by $\alpha'' \colon G \times (X \times \mathcal{B}) \rightarrow H$ where $\alpha''(g,(x,z)) = \alpha(g,x)$ and $\beta'' \colon H \times (X \times \mathcal{B}) \rightarrow G$ where $\beta''(h,(x,z)) = \beta(h,x)$ which are $\varphi$-integrable and $\psi$-integrable respectively. 
	
	Now, the induced actions of $G$ and $H$ on $(X \times \mathcal{B}, \mu_{X} \times \mu')$ are essentially free and probability measure preserving, where $\mu_{X} = \mu(X)^{-1}\mu\vert_{X}$.
	We claim that these induced actions are orbit equivalent, and that their cocyles are given by the measure equivalence cocycles, which proves that $(X\times \mathcal{B},\mu\times \mu')$ is a $(\varphi,\psi)$-integrable orbit equivalence coupling.
	First, we note that for all $g \in G$, we have $g\ast (x,z) \in \alpha(g,x)^{-1}\ast X \times \mathcal{B}$ for a.e. $(x,z) \in X \times \mathcal{B}$. Fixing $g \in G$ and letting $h = \alpha(g,x)$, we then have for the induced actions that $g \cdot (x,z) = g\ast \alpha(g,x)\ast (x,z)$ and $h \cdot (x,z) = \beta(\alpha(g,x),x)\ast \alpha(g,x)\ast (x,z)$ for a.e. $(x,z) \in X \times \mathcal{B}$.
	Now, without loss of generality it suffices to show that $\beta(\alpha(g,x),x)=g$ for a.e.\ $(x,z) \in X \times \mathcal{B}$, as it proves that $G\cdot x\subseteq H\cdot x$ and that its orbit equivalence cocycle $G\times (X\times \mathcal{B})\to H$ is the same as $\alpha''$.
	This is true since $\alpha(g,x)\ast (x,z) \in \beta(\alpha(g,x),x)^{-1}\ast X \times \mathcal{B}$ and $g\ast (x,z) \in \alpha(g,x)^{-1}\ast X_G \times \mathcal{B}$ for a.e. $(x,z) \in X \times \mathcal{B}$ and $X\times\mathcal{B}$ is a measure fundamental domain. Moreover, for every $g \in G$, we have $g \cdot (x,z) = \alpha''(g,(x,z)) \cdot (x,z)$ for almost every $(x,z) \in X \times \mathcal{B}$. This concludes the first implication.
	\\
	
	Towards proving the converse, let $(X,\nu)$ be an orbit equivalence coupling from $G$ to $H$ that is $(\varphi,\psi)$-integrable. Now, take $\Omega = X\times G$ with $\mu = \nu\times \bar{\mu}$, where $\bar{\mu}$ is the counting measure. Note that $X$ can be seen as the subset $X\times \{e_G\}$ of $\Omega$.
	Next, let $G\times H$ act on $(\Omega,\mu)$ such that $(g,h)\ast (x,g')= (h\cdot x , gg'\beta'(h,x)^{-1})$. Then $(\Omega,X,X,\mu)$ is a $(\varphi,\psi)$-integrable measure equivalence coupling.
	Indeed, $\Omega$ is standard Borel, as it is the product of two standard Borel spaces. The space $X$ is a measure fundamental domain of $G$, since $G$ acts by left multiplication on the $G$-component in $\Omega$. Towards proving that $X$ is also a fundamental domain for $H$, note that $h\ast (x,e_G)\in \beta'(h,x)^{-1}\ast X$ for every $h\in H$ and $x\in X$, note that $\beta'(h,x)=e_G$ if and only if $h=e_H$ for almost every $x\in X$ because the action of $H$ on $X$ is essentially free and note that $h\mapsto \beta'(h,x)$ is surjective for almost every $x\in X$. Hence, $X$ is a fundamental domain for the $H$-action and $\beta(h,(x,e_G))=\beta'(h,x)$. Finally, note that $(g, \alpha'(g,x))\ast (x,e_G) \in X$, since $\beta'(\alpha'(g,x),g\cdot x)=g$ for every $g\in G$ and almost every $x\in X$. So, $\alpha(g,(x,e_G))=\alpha'(g,x)$.
	Thus, $G$ and $H$ have commuting, measure preserving actions on $\Omega$, both actions have $X$ as a fundamental domain with finite measure and the cocycles $\alpha$ and $\beta$ are $\varphi$-integrable and $\psi$-integrable respectively.
	This concludes the proof.
\end{proof}

\subsection{Ultralimit of probability spaces}\label{sec:Loeb}
We briefly recall the Loeb construction of an ultralimit of probability spaces (see \cite{CKTD-13,Car-16} for more details).  This material will be used in \S \ref{section:non-standard}. In what follows, $\mathcal{U}$ denotes a non-principal ultrafilter on $\N$.

Let $\{(X_n,\nu_n)\}_{n\in\mathbb N}$ be a sequence of finite probability spaces. The ultraproduct $X_{\mathcal{U}}$ is defined as usual: we consider the equivalence relation $\sim_{\mathcal{U}}$ on $\prod_n X_n$ defined as $(x_n)_n\sim_{\mathcal{U}}(y_n)_n$ if $\{n\in \N\colon x_n=y_n\}\in {\mathcal{U}}$ and we set
\[X_{\mathcal{U}}\coloneqq \prod_{n\in\mathbb N}X_n/\sim_{\mathcal{U}}.\]

We will denote by $x_{\mathcal{U}}$ and $A_{\mathcal{U}}$ elements and subsets of $X_{\mathcal{U}}$. For a sequence $\{x_n\in X_n\}_n$, we will denote its class in $X_{\mathcal{U}}$ by $[x_n]_{\mathcal{U}}$ and similarly for a sequence of subsets $\{A_n\subset X_n\}_n$ we will denote its class by $[A_n]_{\mathcal{U}}$. If each $X_n$ is a probability space, then there is a canonical way to construct a probability measure on $X_{\mathcal{U}}$, the Loeb probability space, such that sets of the form $[A_n]_{\mathcal{U}}$ are measurable. 

\begin{theorem} 
Let $\{(X_n,\mu_n)\}_{n\in\N}$ be a sequence of finite probability spaces and let $X_{\mathcal{U}}$ be the ultraproduct of the sequence $\{X_n\}_n$. Then there exists a complete probability space $(X_{\mathcal{U}},\mathfrak{B}_{\mathcal{U}},\mu_{\mathcal{U}})$ such that
  \begin{enumerate}
  \item for every sequence $\{A_n\subseteq X_n\}_n$ of subsets, the set $[A_n]_{\mathcal{U}}$ is measurable and $\mu_{\mathcal{U}}([A_n]_{\mathcal{U}})=\lim_{\mathcal{U}}\mu_n(A_n)$,
  \item for every measurable subset $A_{\mathcal{U}}\subseteq X_{\mathcal{U}}$ there exists a sequence of subsets $\{A_n\subseteq X_n\}_n$ such that $\mu_{\mathcal{U}}(A_{\mathcal{U}}\Delta [A_n]_{\mathcal{U}})=0$.
  \end{enumerate}
\end{theorem}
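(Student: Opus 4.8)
The plan is to identify $\mu_{\mathcal U}$ as the completion of the Carathéodory extension of the evident finitely additive set function on the algebra of \emph{internal} sets; the only non-formal ingredient is the countable saturation of the ultraproduct. First I would set up this algebra. Let $\mathcal A=\{[A_n]_{\mathcal U}\mid A_n\subseteq X_n\}$ be the family of internal subsets of $X_{\mathcal U}$. Since $[A_n]_{\mathcal U}^c=[X_n\setminus A_n]_{\mathcal U}$, $[A_n]_{\mathcal U}\cup[B_n]_{\mathcal U}=[A_n\cup B_n]_{\mathcal U}$ and similarly for finite intersections, $\mathcal A$ is a Boolean subalgebra of $2^{X_{\mathcal U}}$ containing $\emptyset=[\emptyset]_{\mathcal U}$ and $X_{\mathcal U}=[X_n]_{\mathcal U}$. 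A short check gives that $[A_n]_{\mathcal U}=[B_n]_{\mathcal U}$ if and only if $\{n\mid A_n=B_n\}\in\mathcal U$, so that $\mu_0([A_n]_{\mathcal U}):=\lim_{\mathcal U}\mu_n(A_n)$ is well defined on $\mathcal A$. Finite additivity is immediate: if $[A_n]_{\mathcal U}$ and $[B_n]_{\mathcal U}$ are disjoint then $\{n\mid A_n\cap B_n=\emptyset\}\in\mathcal U$, and on this $\mathcal U$-large set $\mu_n(A_n\cup B_n)=\mu_n(A_n)+\mu_n(B_n)$, an identity which survives the ultralimit.

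The key step is to promote $\mu_0$ to a \emph{premeasure} (countably additive on $\mathcal A$) using the $\aleph_1$-\emph{saturation} of the ultraproduct: any sequence of nonempty internal sets with the finite intersection property has nonempty intersection. [This is a diagonal argument: writing $I_m=\{n\mid D_n^{(1)}\cap\cdots\cap D_n^{(m)}\neq\emptyset\}\in\mathcal U$ for internal $D_k=[D_n^{(k)}]_{\mathcal U}$, one picks $x_n$ in the intersection of the first $m(n)$ of the $D_n^{(k)}$, where $m(n)=\max\{m\le n\mid n\in I_1\cap\cdots\cap I_m\}$, and checks $[x_n]_{\mathcal U}\in\bigcap_kD_k$, using that $\mathcal U$ is non-principal and $\N$ countable.] Granting this, continuity from above at $\emptyset$ is automatic: a decreasing sequence $C_k\in\mathcal A$ with $\bigcap_kC_k=\emptyset$ must, by saturation, have some $C_N=\emptyset$, whence $\mu_0(C_k)\to0$. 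A finitely additive probability on an algebra that is continuous at $\emptyset$ is $\sigma$-additive, so $\mu_0$ is a premeasure; by Carathéodory's extension theorem it extends to a measure on $\sigma(\mathcal A)$, and I take $(X_{\mathcal U},\mathfrak B_{\mathcal U},\mu_{\mathcal U})$ to be its completion. This is a complete probability space whose measure restricts to $\mu_0$ on $\mathcal A$, which is exactly assertion (1).

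For assertion (2) I would first prove the weaker statement that every $A\in\mathfrak B_{\mathcal U}$ is approximable in measure by internal sets, i.e. for all $\eps>0$ there is $B\in\mathcal A$ with $\mu_{\mathcal U}(A\triangle B)<\eps$: the collection of such $A$ is readily seen to be a $\sigma$-algebra, it contains $\mathcal A$, hence $\sigma(\mathcal A)$, and by completeness all of $\mathfrak B_{\mathcal U}$. To upgrade ``approximable'' to ``equal mod null'', I would take an inner approximation by increasing internal sets $B_k\subseteq A$ with $\mu_0(B_k)\uparrow\mu_{\mathcal U}(A)$ and invoke a second consequence of saturation — the existence of an \emph{internal hull}: an internal $B\supseteq\bigcup_kB_k$ with $\mu_0(B)=\sup_k\mu_0(B_k)=\mu_{\mathcal U}(A)$. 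Then $\bigcup_kB_k\subseteq B$ forces $\mu_{\mathcal U}(A\cap B)=\mu_{\mathcal U}(A)=\mu_{\mathcal U}(B)$, so $\mu_{\mathcal U}(A\setminus B)=\mu_{\mathcal U}(B\setminus A)=0$, giving $\mu_{\mathcal U}(A\triangle B)=0$ as required.

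Everything apart from saturation is bookkeeping, so the main obstacle is the countable saturation property itself — the nonempty-intersection principle for descending nonempty internal sets and its internal-hull variant. It is used once to make $\mu_0$ a premeasure and once to pass from approximate to exact internal representatives, and it is precisely where the non-principality of $\mathcal U$ and the countability of the index set are essential.
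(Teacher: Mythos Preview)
The paper does not actually prove this theorem: it is stated in \S\ref{sec:Loeb} as a background result (the Loeb construction) with references to \cite{CKTD-13,Car-16} for details. So there is no ``paper's own proof'' to compare against.

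Your argument is the standard Loeb construction and is essentially correct. The one place that deserves a word more is the sentence ``take an inner approximation by increasing internal sets $B_k\subseteq A$ with $\mu_0(B_k)\uparrow\mu_{\mathcal U}(A)$'': approximation in symmetric difference does not by itself produce internal sets \emph{contained in} $A$. The usual fix is to first observe (using the internal-hull trick you already have) that the Carath\'eodory outer measure simplifies to $\mu^*(A)=\inf\{\mu_0(E)\mid E\in\mathcal A,\ A\subseteq E\}$, so that by complementation the inner measure is $\sup\{\mu_0(E)\mid E\in\mathcal A,\ E\subseteq A\}$; for measurable $A$ this equals $\mu_{\mathcal U}(A)$ and gives the desired $B_k$. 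Alternatively, and perhaps more cleanly, you can bypass the inner approximation entirely: show directly that the family $\mathcal C=\{A\mid \exists B\in\mathcal A,\ \mu_{\mathcal U}(A\triangle B)=0\}$ is a $\sigma$-algebra containing $\mathcal A$, using the internal hull for countable unions of internal sets; then $\mathcal C\supseteq\sigma(\mathcal A)$ and, by completeness, $\mathcal C\supseteq\mathfrak B_{\mathcal U}$. Either route closes the gap with the same saturation ingredient you already isolated.
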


\section{Statistical coarse geometry of finite metric spaces}\label{sec:StatisticCoarse}

This section is devoted to $\mathcal U$-statistical metric properties of a sequence of maps between finite metric spaces.   We believe that the interest of these definitions goes beyond the setting of this paper, and we encourage the interested reader to extend them even beyond the framework of finite metric spaces\footnote{A possible generalization of finite metric space adapted to these notions could be that of a random pointed metric spaces: a measurable family of pointed metric spaces indexed by a probability space.}.

\subsection{$\mathcal U$-statistical coarse properties}\label{sec:statistical}
We say that a sequence $(X_n)_n$ of finite metric spaces is \emph{uniformly locally finite} if for every $R>0$ there exists a $N_R>0$ such that the cardinality $\#B_{X_n}(x,R)\le N_R$ for every $x$ in any of the $X_n$ where $B_{X_n}(x,R)=\{y\in X_n\mid d_{X_n}(x,y)\le R\}$.
Given a finite set $X$, we denote by $\PP_X$ the uniform probability measure on $X$.

In what follows, we let $\mathcal{U}$ be a non-principal ultrafilter on $\N$. 
\begin{definition}\label{def:statisticSet}
	Let $(Y_n)_n$ and $(Z_n)_n$ be two sequences of metric spaces and $\uu_n\colon Y_n\to Z_n$ be a sequence of maps.
	\begin{itemize}
		\item Assume that $Y_n$ is finite. We say that the sequence $(\uu_n)_n$ is \emph{$\mathcal{U}$-statistically coarsely injective} if
		\[\lim_{C\to \infty}\lim_\mathcal{U}\PP_{Y_n}\left(\left\{y\in Y_n \mid \diam_{Y_n}(\uu_n^{-1}(\uu_n(y)))\geq C\right\}\right)=0.\]
	
		\item Assume that $Z_n$ is finite. We say that the sequence $(\uu_n)_n$ is \emph{$\mathcal{U}$-statistically coarsely surjective} if
		\[\lim_{C\to \infty}\lim_\mathcal{U}\PP_{Z_n}\left(\left\{z\in Z_n \mid d_{Z_n}(z,\uu_n(Y_n))\ge C\right\}\right)=0.\]
		\item Assume that $Y_n$ and $Z_n$ are finite.  The sequence $(\uu_n)_n$ is \emph{$\mathcal{U}$-statistically coarsely bijective} if it is both $\mathcal{U}$-statistically coarsely injective and $\mathcal{U}$-statistically coarsely surjective.
	\end{itemize}
\end{definition}
Using the material of \S \ref{sec:Loeb},  $\lim_\mathcal{U}\PP$ can be interpreted as a probability measure on a certain (non-standard) space. This allows to reformulate  the above notions as follows:  $\mathcal{U}$-statistical coarse injectivity means that the probability that the diameter of the preimages are larger than $C$ tends to zero as $C$ tends to infinity. Similarly, is $\mathcal{U}$-statistically coarsely surjective means that the probability that an element of the target space is at distance $C$ from the image tends to zero as $C$ tends to infinity.

\begin{proposition}\label{prop:compaireSize}
	Let $(Y_n)_n$ and $(Z_n)_n$ be two sequences of uniformly locally finite metric spaces and let $\uu_n\colon Y_n\to Z_n$ be a sequence of maps, then
	\begin{itemize}
		\item If the sequence $(\uu_n)_n$ is $\mathcal{U}$-statistically coarsely surjective, then $\lim_\mathcal{U}\# Z_n/\# Y_n$ is finite.
		\item If the sequence $(\uu_n)_n$ is $\mathcal{U}$-statistically coarsely injective, then  $\lim_\mathcal{U}\# Y_n/\# \Im\uu_n$ (so in particular $\lim_\mathcal{U}\# Y_n/\# Z_n$) is finite.
	\end{itemize} 
\end{proposition}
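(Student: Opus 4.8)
The plan is to convert each statistical coarseness hypothesis into a genuine counting inequality valid for $\mathcal U$-almost every $n$, and then simply pass to the ultralimit. A key simplification I would exploit is that one need not let $C\to\infty$: it suffices to fix a single value of $C$ for which the inner ultralimit drops below a fixed threshold such as $1/2$, and such a $C$ exists precisely because the inner ultralimits tend to $0$ as $C\to\infty$. Uniform local finiteness then converts ``covered by few balls'' into ``few points'', which is the whole content.

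For the surjective case, I would choose $C$ with $\lim_\mathcal U\PP_{Z_n}(\{z\in Z_n \mid d_{Z_n}(z,\uu_n(Y_n))\ge C\})<1/2$. Since an ultralimit lies in every neighbourhood of itself along a set of $\mathcal U$, the set of indices $n$ for which $\PP_{Z_n}(\{z\in Z_n\mid d_{Z_n}(z,\uu_n(Y_n))<C\})\ge 1/2$ belongs to $\mathcal U$. For such $n$, every $z$ with $d_{Z_n}(z,\uu_n(Y_n))<C$ lies in some ball $B_{Z_n}(\uu_n(y),C)$ with $y\in Y_n$, so this set is covered by at most $\#Y_n$ balls of radius $C$, each of cardinality at most $N_C$ by uniform local finiteness of $(Z_n)_n$. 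Hence $\tfrac12\#Z_n\le \#Y_n\cdot N_C$, i.e.\ $\#Z_n/\#Y_n\le 2N_C$ for $\mathcal U$-almost every $n$, and taking $\lim_\mathcal U$ gives the first claim.

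For the injective case, I would fix $C$ with $\lim_\mathcal U\PP_{Y_n}(\{y\in Y_n\mid \diam_{Y_n}(\uu_n^{-1}(\uu_n(y)))\ge C\})<1/2$, so that the set of $n$ with $\PP_{Y_n}(Y_n')\ge 1/2$ lies in $\mathcal U$, where $Y_n':=\{y\in Y_n\mid \diam_{Y_n}(\uu_n^{-1}(\uu_n(y)))<C\}$. The crucial point is that $Y_n'$ is a union of fibers of $\uu_n$, since membership depends only on $\uu_n(y)$; each such fiber has diameter $<C$, hence is contained in a ball of radius $C$ and so has at most $N_C$ points by uniform local finiteness of $(Y_n)_n$. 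As there are at most $\#\Im\uu_n$ fibers in total, this yields $\#Y_n'\le \#\Im\uu_n\cdot N_C$, whence $\tfrac12\#Y_n\le \#\Im\uu_n\cdot N_C$ and $\#Y_n/\#\Im\uu_n\le 2N_C$ for $\mathcal U$-almost every $n$. Passing to $\lim_\mathcal U$ proves finiteness of $\lim_\mathcal U\#Y_n/\#\Im\uu_n$, and the parenthetical statement follows at once from $\#\Im\uu_n\le \#Z_n$.

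Overall the argument is entirely elementary and I expect no serious obstacle. The only points requiring a little care are (i) justifying that a single good $C$ can be fixed, rather than manipulating the double limit directly, and (ii) checking that an inequality of the form ``$\#Z_n/\#Y_n\le 2N_C$ for $n$ in a set belonging to $\mathcal U$'' transfers to the ultralimit. The latter is immediate, since an ultralimit is determined by the values of the sequence on any set in $\mathcal U$, and the relevant ratio is bounded by $2N_C$ along such a set, so its ultralimit exists and is at most $2N_C$.
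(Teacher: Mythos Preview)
Your proof is correct and follows essentially the same approach as the paper's own argument: fix a single $C$ at which the inner ultralimit is below $1/2$, use uniform local finiteness to bound the relevant set by $N_C$ times the number of balls or fibers, and conclude the ratio is at most $2N_C$. If anything, you are slightly more explicit than the paper about the ``for $\mathcal U$-almost every $n$'' step and about why $Y_n'$ is a union of fibers.
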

\begin{proof}
	When this sequence of maps is $\mathcal{U}$-statistically coarsely surjective, then there exists $C>0$ such that $\#\{z\in Z_n \mid d_{Z_n}(z,\uu_n(Y_n))\le C\}\ge \frac{1}{2}\#Z_n$. As $(Z_n)_n$ is uniformly locally finite, for every $C$ there exists a constant $N_C$ such that every ball in $Z_n$ of radius $C$ has cardinality at most $N_C$ for any $n$. Hence, $\frac{1}{2}\#Z_n\le N_C\#\uu_n(Y_n)\le N_C\#Y_n$. Therefore $\# Z_n/\# Y_n\le 2N_C$, which proves the first claim.
	
	When this sequence of maps is $\mathcal{U}$-statistically coarsely injective, then there exists $C>0$ such that \[\lim_\mathcal{U}\PP_{Y_n}\left(\{y\in Y_n \mid \diam_{Y_n}(\uu_n^{-1}(\uu_n(y)))\le C\}\right)\ge \frac{1}{2}.\] As $(Y_n)_n$ is uniformly locally finite, for every $C$ there exists a constant $N'_C$ such that every ball in $Y_n$ of radius $C$ has cardinality at most $N'_C$ for any $n$. Hence, $\frac{1}{2}\#Y_n\le N'_C\#\uu_n(Y_n)$. Therefore $\# Y_n/\#\uu_n(Y_n)\le 2N'_C$, which proves the second claim.
\end{proof}

We now turn to ``statistical versions'' of metric properties such as: Lipschitz, expansivity, coarse equivalence... 

\begin{definition}\label{def:statisticCoarseMetric}
	Let $(Y_n)_n$ and $(Z_n)_n$ be two sequences of finite metric spaces that are uniformly locally finite and $\uu_n\colon Y_n\to Z_n$ be a sequence of maps.
	\begin{itemize}
		\item The sequence $(\uu_n)_n$ is \emph{$\mathcal{U}$-statistically Lipschitz} if for every $r>0$,
		\[\lim_{C\to \infty}\lim_\mathcal{U}\PP_{Y_n}\left(\left\{y\in Y_n\mid \diam_{Z_n}(\uu_n(B_{Y_n}(y,r))\geq C\right\}\right)= 0.\]

		\item The sequence $(\uu_n)_n$ is \emph{$\mathcal{U}$-statistically expansive} if for every $r>0,$
		\[\lim_{C\to \infty}\lim_\mathcal{U}\PP_{Y_n}\left(\left\{y\in Y_n\mid \diam_{Y_n}(\uu_n^{-1}(B_{Z_n}(\uu_n(y),r))\geq C\right\}\right)= 0.\]

		\item The sequence $(\uu_n)_n$ is \emph{$\mathcal{U}$-statistically a coarse equivalence} if it is $\mathcal{U}$-statistically Lipschitz, $\mathcal{U}$-statistically expansive and $\mathcal{U}$-statistically coarsely bijective.	\end{itemize}
\end{definition}
An example of a $\mathcal{U}$-statistically coarse equivalence is a sequence of quasi-isometries with uniform constants.

\begin{remark}\label{rem:expanImpliesInj}
Note that $\mathcal{U}$-statistically expansivity implies $\mathcal{U}$-statistically coarse injectivity.
\end{remark}

\subsection{Reformulation for sofic approximations of finitely generated groups}

We start by the following useful observation.

\begin{proposition}\label{prop:coarseBij}
Let $G$ and $H$ be two groups each with finite generating set $S_G$ and $S_H$, respectively, and let $(\mathcal{G}_n)_n$ and $(\mathcal{H}_n)_n$ be sequences of sofic approximations of the Cayley graphs $(G,S_G)$ and $(H,S_H)$ respectively.
If $\uu_n: \mathcal{G}_n\to \mathcal{H}_n$ is $\mathcal{U}$-statistically coarsely bijective, then for all $r\geq 0$
\[\lim_\mathcal{U}\PP_{\mathcal{G}_n}\left(\left\{x\in\mathcal{G}_n\mid \uu_n(x)\in \mathcal{H}_n^{(r)}\right\}\right)=1. \]
\end{proposition}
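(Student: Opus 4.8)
The plan is to bound the complementary ``bad'' set $\{x \in \mathcal{G}_n : \uu_n(x) \notin \mathcal{H}_n^{(r)}\} = \uu_n^{-1}(\mathcal{H}_n \setminus \mathcal{H}_n^{(r)})$ and to show that its $\mathcal{U}$-probability tends to $0$. The starting point is the soficity of $(\mathcal{H}_n)_n$, which by (\ref{eq:sofic}) gives $\lim_\mathcal{U} \PP_{\mathcal{H}_n}(\mathcal{H}_n \setminus \mathcal{H}_n^{(r)}) = 0$: the target bad set is negligible inside $\mathcal{H}_n$. The one genuine difficulty is that $\uu_n$ is not assumed measure preserving, so a small subset of $\mathcal{H}_n$ could a priori have a large preimage in $\mathcal{G}_n$. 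Overcoming this is exactly where coarse bijectivity enters: coarse injectivity will control the sizes of fibers, and coarse surjectivity (through Proposition \ref{prop:compaireSize}) will control the ratio $\#\mathcal{H}_n / \#\mathcal{G}_n$.

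Concretely, for a threshold $C > 0$ I would introduce $B_C = \{x \in \mathcal{G}_n : \diam_{\mathcal{G}_n}(\uu_n^{-1}(\uu_n(x))) \geq C\}$ and split the bad set as the union of its intersection with $B_C$ and its complement in $B_C$. The first piece is contained in $B_C$, whose $\mathcal{U}$-probability tends to $0$ as $C \to \infty$ by $\mathcal{U}$-statistical coarse injectivity (Definition \ref{def:statisticSet}). For the second piece, every $x \notin B_C$ has a fiber $\uu_n^{-1}(\uu_n(x))$ of diameter strictly less than $C$, hence---since $(\mathcal{G}_n)_n$ is uniformly locally finite---of cardinality at most a constant $N_C$ independent of $n$. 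A fiber count then yields the cardinality bound $\#\bigl((\text{bad set}) \setminus B_C\bigr) \leq N_C \cdot \#(\mathcal{H}_n \setminus \mathcal{H}_n^{(r)})$, because each such $x$ maps into $\mathcal{H}_n \setminus \mathcal{H}_n^{(r)}$ along a fiber of size at most $N_C$.

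Dividing by $\#\mathcal{G}_n$ turns this into the probability estimate $\PP_{\mathcal{G}_n}\bigl((\text{bad set}) \setminus B_C\bigr) \leq N_C \cdot \PP_{\mathcal{H}_n}(\mathcal{H}_n \setminus \mathcal{H}_n^{(r)}) \cdot (\#\mathcal{H}_n / \#\mathcal{G}_n)$. Taking $\lim_\mathcal{U}$ with $C$ fixed, the factor $\PP_{\mathcal{H}_n}(\mathcal{H}_n \setminus \mathcal{H}_n^{(r)})$ vanishes by soficity, while $\#\mathcal{H}_n / \#\mathcal{G}_n$ stays bounded by Proposition \ref{prop:compaireSize} (using coarse surjectivity), so the entire right-hand side tends to $0$. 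This gives $\lim_\mathcal{U}\PP_{\mathcal{G}_n}(\text{bad set}) \leq \lim_\mathcal{U}\PP_{\mathcal{G}_n}(B_C)$ for every $C$; letting $C \to \infty$ kills the remaining term and forces $\lim_\mathcal{U}\PP_{\mathcal{G}_n}(\text{bad set}) = 0$, which is the assertion. The main obstacle, as flagged above, is the absence of measure preservation for $\uu_n$, and the crux of the argument is the interplay between coarse injectivity (bounding fibers via uniform local finiteness) and Proposition \ref{prop:compaireSize} (bounding the cardinality ratio), which together allow the soficity of $\mathcal{H}_n$ to be transferred back through $\uu_n$ to $\mathcal{G}_n$.
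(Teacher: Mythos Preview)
Your proof is correct and follows essentially the same approach as the paper's. Both arguments split on whether the fiber $\uu_n^{-1}(\uu_n(x))$ is small (the paper bounds its cardinality directly, you bound its diameter and then invoke uniform local finiteness---which is precisely the step the paper phrases as ``since balls of bounded radius in $\mathcal{G}_n$ have bounded size''), and then use Proposition~\ref{prop:compaireSize} together with the soficity of $(\mathcal{H}_n)_n$ to eliminate the small-fiber contribution.
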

\begin{proof}
Let $\eta= \lim_{\mathcal U}\#\mathcal{H}_n/\#\mathcal{G}_n$, which by
 \cref{prop:compaireSize} is finite. 
 Since balls of bounded radius in $\mathcal G_n$ have bounded size (because $\mathcal G_n$ has bounded degree), $\mathcal{U}$-statistical coarse injectivity implies that 
 \[\lim_{C\to \infty}\lim_\mathcal{U} \PP_{\mathcal{G}_n}\left(\left\{x\in\mathcal{G}_n\mid \#\uu_n^{-1}(x)\leq C\right\}\right)=1. \]
Hence it is enough to prove that for each $C$,
\[\lim_\mathcal{U}\PP_{\mathcal{G}_n}\left(\left\{x\in\mathcal{G}_n\mid \uu_n(x)\notin \mathcal{H}_n^{(r)},\; \#\uu_n^{-1}(x)\leq C\right\}\right)=0
\]
But this quantity is at most 
\[\eta C\lim_\mathcal{U}\PP_{\mathcal{H}_n}\left(\left\{y\in\mathcal{H}_n\mid y\notin \mathcal{H}_n^{(r)}\right\}\right), 
\]
which equals $0$ since $\mathcal{H}_n$ is a sofic approximation.
\end{proof}

Note that a map between two graphs is $C$-Lipschitz if and only if adjacent vertices are sent to vertices at distance at most $C$. Something similar happens  for $\mathcal{U}$-statistically Lipschitz maps between sofic approximations, as shown by the following proposition.

\begin{proposition}\label{prop:StatBiLGroups}
Let $G$ and $H$ be two groups each with finite generating set $S_G$ and $S_H$, respectively, and let $(\mathcal{G}_n)_n$ and $(\mathcal{H}_n)_n$ be sequences of sofic approximations of the Cayley graphs $(G,S_G)$ and $(H,S_H)$ respectively.
Let $\uu_n\colon \mathcal{G}_n\to \mathcal{H}_n$ be a sequence of maps.
The following are equivalent:
	\begin{enumerate}
		\item\label{item:StatBiLGroup1a} for every $s\in S_G$ we have
		\[\lim_{C\to \infty}\lim_\mathcal{U}\PP_{\mathcal{G}_n}\left(\left\{y\in \mathcal{G}_n\mid d_{\mathcal{H}_n}(\uu_n(y),\uu_n(ys))\le C\right\}\right)= 1;\]

		\item\label{item:StatBiLGroup1b} for every $g\in G$ we have
		\[\lim_{C\to \infty}\lim_\mathcal{U}\PP_{\mathcal{G}_n}\left(\left\{y\in \mathcal{G}_n\mid d_{\mathcal{H}_n}(\uu_n(y),\uu_n(yg))\le C\right\}\right)= 1;\]
		\item\label{item:StatBiLGroup1c} the sequence $\uu_n$ is $\mathcal{U}$-statistically Lipschitz
	\end{enumerate}
\end{proposition}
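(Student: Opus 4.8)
The plan is to prove the cyclic chain of implications $(\ref{item:StatBiLGroup1a})\Rightarrow(\ref{item:StatBiLGroup1b})\Rightarrow(\ref{item:StatBiLGroup1c})\Rightarrow(\ref{item:StatBiLGroup1a})$, which establishes all three equivalences at once. Two of these are essentially formal manipulations with the triangle inequality and a finite union bound; the real content lies in promoting control along generators to control along arbitrary group elements, which is where soficity is used.

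I would dispatch the two easy implications first. For $(\ref{item:StatBiLGroup1c})\Rightarrow(\ref{item:StatBiLGroup1a})$, specialize the $\mathcal U$-statistical Lipschitz condition to $r=1$: for $y\in\mathcal{G}_n^{(1)}$ the ball $B_{\mathcal{G}_n}(y,1)$ contains $ys$ for every $s\in S_G$, so $d_{\mathcal{H}_n}(\uu_n(y),\uu_n(ys))\le\diam_{\mathcal{H}_n}(\uu_n(B_{\mathcal{G}_n}(y,1)))$. Thus the set where the left-hand side is $\geq C$ is contained, up to the error set $\mathcal{G}_n\setminus\mathcal{G}_n^{(1)}$ (which is $\mathcal U$-null by (\ref{eq:sofic})), in the set where the ball-image has diameter $\geq C$; passing to complements and letting $C\to\infty$ yields (\ref{item:StatBiLGroup1a}). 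For $(\ref{item:StatBiLGroup1b})\Rightarrow(\ref{item:StatBiLGroup1c})$, fix $r$ and recall that for $y\in\mathcal{G}_n^{(r)}$ the ball $B_{\mathcal{G}_n}(y,r)$ equals $\{yg: g\in B_G(e_G,r)\}$, so the triangle inequality gives $\diam_{\mathcal{H}_n}(\uu_n(B_{\mathcal{G}_n}(y,r)))\le 2\max_{g\in B_G(e_G,r)}d_{\mathcal{H}_n}(\uu_n(y),\uu_n(yg))$. Intersecting over the finite set $B_G(e_G,r)$ the ``good'' sets provided by (\ref{item:StatBiLGroup1b}) on which $d_{\mathcal{H}_n}(\uu_n(y),\uu_n(yg))\le C$, a finite union bound shows their intersection still has $\lim_{\mathcal U}$-probability tending to $1$ as $C\to\infty$, and on it the ball-image has diameter $\le 2C$, which is exactly (\ref{item:StatBiLGroup1c}).

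The main step is $(\ref{item:StatBiLGroup1a})\Rightarrow(\ref{item:StatBiLGroup1b})$. Write $g=s_1\cdots s_k$ with $s_i\in S_G$ (if $S_G$ is not symmetric, note first that (\ref{item:StatBiLGroup1a}) for $s$ gives it for $s^{-1}$ by the re-centering argument below, via the substitution $y\mapsto ys^{-1}$). Setting $w_i=s_1\cdots s_{i-1}$, the triangle inequality gives $d_{\mathcal{H}_n}(\uu_n(y),\uu_n(yg))\le\sum_{i=1}^k d_{\mathcal{H}_n}(\uu_n(yw_i),\uu_n(yw_is_i))$, and the $i$-th summand is $\le C$ precisely when the translated point $yw_i$ lies in the set $B_n^{(i)}(C)=\{z: d_{\mathcal{H}_n}(\uu_n(z),\uu_n(zs_i))\le C\}$ furnished by (\ref{item:StatBiLGroup1a}). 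The crux is that right translation by the fixed element $w_i$ is asymptotically measure preserving on the sofic approximation: the partial map $y\mapsto yw_i$ is injective and defined on $\mathcal{G}_n^{(|w_i|)}$ and its image contains $\mathcal{G}_n^{(2|w_i|)}$, so by (\ref{eq:sofic}) both domain and image are $\mathcal U$-conull and $\lim_{\mathcal U}\PP_{\mathcal{G}_n}(\{y: yw_i\in B_n^{(i)}(C)\})=\lim_{\mathcal U}\PP_{\mathcal{G}_n}(B_n^{(i)}(C))$. A final finite union bound over $i=1,\dots,k$ then forces all $k$ summands to be $\le C$ with $\lim_{\mathcal U}$-probability tending to $1$ as $C\to\infty$, whence $d_{\mathcal{H}_n}(\uu_n(y),\uu_n(yg))\le kC$ and (\ref{item:StatBiLGroup1b}) follows. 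I expect the bookkeeping around this asymptotic measure-invariance of the almost-action — in particular checking that restricting to the relevant sets $\mathcal{G}_n^{(\cdot)}$ incurs only $\mathcal U$-negligible error — to be the one genuinely delicate point of the argument.
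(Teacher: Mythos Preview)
Your proof is correct and follows essentially the same approach as the paper: the easy implications via set containment and a finite union bound, and the key step $(\ref{item:StatBiLGroup1a})\Rightarrow(\ref{item:StatBiLGroup1b})$ by writing $g=s_1\cdots s_k$ and applying the triangle inequality. In fact you supply more detail than the paper does on that step---the paper simply says ``this follows by writing $g=s_1\ldots s_n$ and arguing by triangle inequality; we leave the details to the reader''---and your observation about asymptotic measure preservation of the right translation $y\mapsto yw_i$ is exactly what makes those details work.
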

\begin{proof}
Clearly \ref{item:StatBiLGroup1b} implies \ref{item:StatBiLGroup1a}. The fact that \ref{item:StatBiLGroup1c} implies \ref{item:StatBiLGroup1b} results from the following inequality 

\[\PP_{\mathcal{G}_n}\left(\left\{y\in \mathcal{G}_n\mid \diam_{\mathcal{H}_n}(\uu_n(B_{\mathcal{G}_n}(y,|g|))) \le C\right\}\right)\le \PP_{\mathcal{G}_n}\left(\left\{y\in \mathcal{G}_n\mid d_{\mathcal{H}_n}(\uu_n(y),\uu_n(yg))\le C\right\}\right). 
\]

Conversely, we have

\[\PP_{\mathcal{G}_n}\left(\left\{y\in \mathcal{G}_n\mid \diam_{\mathcal{H}_n}(\uu_n(B_{\mathcal{G}_n}(y,R))) \ge C\right\}\right)\le \sum_{|g|\le R}\PP_{\mathcal{G}_n}\left(\left\{y\in \mathcal{G}_n\mid \diam_{\mathcal{H}_n}(\uu_n(B_{\mathcal{G}_n}(y,|g|))) \le C\right\}\right),\]
which gives that \ref{item:StatBiLGroup1b} implies \ref{item:StatBiLGroup1c}.

To complete the proof of the proposition, it thus remains to show that  \ref{item:StatBiLGroup1a} implies \ref{item:StatBiLGroup1b}. This simply follows by writting $g=s_1\ldots s_n$ and arguing by triangle inequality. We leave the details to the reader.
\end{proof}

Below is an analogous result for expansivity.
\begin{proposition}\label{prop:expsofic}
Let $G$ and $H$ be two groups each with finite generating set $S_G$ and $S_H$, respectively, and let $(\mathcal{G}_n)_n$ and $(\mathcal{H}_n)_n$ be sequences of sofic approximations of the Cayley graphs $(G,S_G)$ and $(H,S_H)$ respectively.
Let $\uu_n\colon \mathcal{G}_n\to \mathcal{H}_n$ be a sequence of maps.
\begin{enumerate}
	\item\label{item:StatBiLGroup2} If the sequence $\uu_n$ is $\mathcal{U}$-statistically coarsely surjective, then it is $\mathcal{U}$-statistically expansive if and only if for every $h\in H$ we have
	\[\lim_{C\to \infty}\lim_{\mathcal U}\PP_{\mathcal{G}_n}\left(\left\{x\in \mathcal{G}_n\mid \diam_{\mathcal{G}_n}(\uu_n^{-1}(\uu_n(x))\cup\uu_n^{-1}(\uu_n(x)h))\ge C\right\}\right)= 0.\]
	\item\label{item:StatBiLGroup3} If the sequence $\uu_n$  has $C'$-dense images, and  has preimages  of bounded diameter, then it is $\mathcal{U}$-statistically expansive if and only if for every $h\in B_H(e_H,2C'+1)$, 
	\[\lim_{C\to \infty}\lim_{\mathcal U}\PP_{\mathcal{G}_n}\left(\left\{y\in \uu_n(\mathcal{G}_n)\mid \diam_{\mathcal{G}_n}(\uu_n^{-1}(y)\cup\uu_n^{-1}(yh))\ge C\right\}\right)= 0\]
\end{enumerate}
\end{proposition}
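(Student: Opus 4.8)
The plan is to reduce both items to a single intermediate statement — call it the \emph{fibered condition} for an element $h$, namely
\[
\lim_{C\to\infty}\lim_\mathcal U\PP_{\mathcal{G}_n}\bigl(\{x\in\mathcal{G}_n\mid \diam_{\mathcal{G}_n}(\uu_n^{-1}(\uu_n(x))\cup\uu_n^{-1}(\uu_n(x)h))\ge C\}\bigr)=0
\]
— and to show that $\mathcal U$-statistical expansivity is equivalent to the fibered condition holding for all $h\in H$. The common mechanism throughout is the elementary bound that for nonempty sets $T_0,T_1,T_2$ one has $\diam(T_0\cup T_2)\le\diam(T_0\cup T_1)+\diam(T_1\cup T_2)$, which iterates along any finite chain of nonempty sets.

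For the first item I would argue as follows. When $\uu_n(x)\in\mathcal{H}_n^{(r)}$ the ball $B_{\mathcal{H}_n}(\uu_n(x),r)$ is canonically identified with $B_H(e_H,r)$, so $\uu_n^{-1}(B_{\mathcal{H}_n}(\uu_n(x),r))=\bigcup_{|h|\le r}\uu_n^{-1}(\uu_n(x)h)$, a union over the finite set $B_H(e_H,r)$. Applying the triangle inequality for diameters through the common nonempty set $\uu_n^{-1}(\uu_n(x))$ gives
\[
\diam_{\mathcal{G}_n}\bigl(\uu_n^{-1}(B_{\mathcal{H}_n}(\uu_n(x),r))\bigr)\le 2\max_{|h|\le r}\diam_{\mathcal{G}_n}\bigl(\uu_n^{-1}(\uu_n(x))\cup\uu_n^{-1}(\uu_n(x)h)\bigr),
\]
while conversely each term on the right is bounded by the left side once $r\ge|h|$. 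Hence, up to the factor $2$ and a union bound over the finitely many $h$ with $|h|\le r$, expansivity for radius $r$ and the fibered condition for $h\in B_H(e_H,r)$ coincide. The only gap is that this identification lives on $\mathcal{H}_n^{(r)}$; but expansivity forces coarse injectivity (\cref{rem:expanImpliesInj}), which with the assumed coarse surjectivity and \cref{prop:coarseBij} yields $\lim_\mathcal U\PP_{\mathcal{G}_n}(\{x\mid\uu_n(x)\notin\mathcal{H}_n^{(r)}\})=0$, making the discrepancy negligible. In the converse direction the case $h=e_H$ of the fibered condition is exactly coarse injectivity, so the same applies.

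For the second item I would first convert between the two measures by counting. The bounded-preimage hypothesis gives a uniform bound $\#\uu_n^{-1}(y)\le N$, and $C'$-density gives $\#\mathcal{H}_n\le N_{C'}\#\uu_n(\mathcal{G}_n)$; since the event in the first item is exactly the $\uu_n$-preimage of the event in the second item, their cardinalities, and hence the probabilities $\PP_{\mathcal{G}_n}$ and $\PP_{\mathcal{H}_n}$, agree up to fixed multiplicative constants for each fixed $h$. Thus the condition of the second item for $h$ is equivalent to the fibered condition for $h$. Given the first item, it then suffices to show that the fibered condition for all $h\in B_H(e_H,2C'+1)$ implies it for all $h\in H$; the reverse implication and the equivalence with expansivity follow from the first item (using that $C'$-density implies coarse surjectivity).

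This bootstrapping is the heart of the second item and I expect it to be the main obstacle. Working in $\mathcal{H}_n$, I would use that $C'$-density makes $\uu_n(\mathcal{G}_n)$ coarsely connected with controlled jumps: replacing each vertex of a geodesic from $w$ to $wh$ by an image point within distance $C'$ produces a chain $w=w_0,\dots,w_m=wh$ in $\uu_n(\mathcal{G}_n)$ with $m\le|h|$ and $d_{\mathcal{H}_n}(w_i,w_{i+1})\le 2C'+1$. On $\mathcal{H}_n^{(|h|+3C'+1)}$ each jump is right-translation by some $k_i\in B_H(e_H,2C'+1)$, so telescoping gives
\[
\diam_{\mathcal{G}_n}\bigl(\uu_n^{-1}(w)\cup\uu_n^{-1}(wh)\bigr)\le\sum_{i=0}^{m-1}\diam_{\mathcal{G}_n}\bigl(\uu_n^{-1}(w_i)\cup\uu_n^{-1}(w_i k_i)\bigr)\le|h|\max_i\Phi(w_i),
\]
where $\Phi(w)=\max_{|k|\le 2C'+1}\diam_{\mathcal{G}_n}(\uu_n^{-1}(w)\cup\uu_n^{-1}(wk))$. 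Consequently, if the left-hand diameter is at least $|h|C_0$ (the case $wh\notin\uu_n(\mathcal{G}_n)$ being controlled by the uniform preimage bound for $C_0$ large), some $w_i$ lies in $\{\Phi\ge C_0\}$, forcing $w$ into the $(|h|+C')$-neighborhood of that bad set. By uniform local finiteness this neighborhood has $\PP_{\mathcal{H}_n}$-measure at most $N_{|h|+C'}$ times that of the bad set, which tends to $0$ as $C_0\to\infty$ (after $\lim_\mathcal U$) by a union bound over $B_H(e_H,2C'+1)$ applied to the hypothesis. The points where the relevant balls fail to be isomorphic to those of $H$ form a set of vanishing $\PP_{\mathcal{H}_n}$-measure by soficity. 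The delicate part is keeping the neighborhood and union-bound constants uniform in $n$ (while allowing dependence on $h$) and checking that all group-translation identifications along the chain are valid on a set of full asymptotic measure.
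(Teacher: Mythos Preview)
Your treatment of item~(1) is essentially the paper's argument: both of you use \cref{prop:coarseBij} to restrict to $\uu_n(x)\in\mathcal{H}_n^{(r)}$, identify the ball with $\{\uu_n(x)h:|h|\le r\}$, and control the diameter of the full preimage by a finite union bound over $h$. You are actually more careful than the paper on two points: you make explicit the factor~$2$ in the diameter inequality, and you note that the $h=e_H$ case of the fibered condition supplies the coarse injectivity needed to invoke \cref{prop:coarseBij} in the converse direction.

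For item~(2) the approaches diverge. The paper reduces to showing that the fibered condition for all $h\in B_H(e_H,2C'+1)$ implies it for all $h\in H$, and does this by a minimal-counterexample argument: if $h$ is shortest with the condition failing, write $h=h_1h_2$ geodesically with $|h_1|,|h_2|\ge C'+1$, use $C'$-density to perturb the midpoint $yh_1$ into the image, and split the diameter via the triangle inequality into two pieces each governed by a strictly shorter element of $H$, contradicting minimality. Your route is instead a direct chain argument: for fixed $h$ you build a path $w=w_0,\dots,w_m=wh$ in $\uu_n(\mathcal{G}_n)$ with $m\le|h|$ and jumps in $B_H(e_H,2C'+1)$, telescope the diameter, and conclude that if it is large then $w$ lies in a bounded neighbourhood of the ``bad'' set $\{\Phi\ge C_0\}$, whose measure you then control by the hypothesis and uniform local finiteness. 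Both arguments are correct. The paper's induction is shorter and avoids the neighbourhood-enlargement bookkeeping, but your argument is more transparent about the constants and, because it never re-centres the probability at an intermediate point, sidesteps the change-of-basepoint step (and the minor inaccuracies in the paper's write-up of it). Your observation that $wh\notin\uu_n(\mathcal{G}_n)$ forces $\diam(\uu_n^{-1}(w)\cup\uu_n^{-1}(wh))$ to be at most the uniform preimage-diameter bound is exactly the right way to dispose of that boundary case.
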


\begin{proof}
Note that $\mathcal{U}$-statistical expansivity obviously implies (\ref{item:StatBiLGroup2}), the useful implication being the converse one.
Assume (\ref{item:StatBiLGroup2}) holds. Then by \cref{prop:coarseBij}, we have
\[\lim_{C\to \infty}\lim_{\mathcal U}\PP_{\mathcal{G}_n}\left(\left\{x\in \mathcal{G}_n\mid \diam_{\mathcal{G}_n}(\uu_n^{-1}(B(\uu_n(x),r))\ge C\right\}\right)\]
is equal to 
\[\lim_{C\to \infty}\lim_{\mathcal U}\PP_{\mathcal{G}_n}\left(\left\{x\in \mathcal{G}_n\mid \diam_{\mathcal{G}_n}(\uu_n^{-1}(B(\uu_n(x),r))\ge C,\; \uu_n(x)\in \mathcal{H}_n^{(r)}\right\}\right).\]
Assuming that $\uu_n(x)\in \mathcal{H}_n^{(r)}$ implies that the ball of radius $r$ around it are vertices of the form $\uu_n(x)h$ for $h\in H$ with $|h|\leq r$.
Hence the latter is at most
\[\sum_{h\in H, |h|\leq r}\lim_{C\to \infty}\lim_{\mathcal U}\PP_{\mathcal{G}_n}\left(\left\{x\in \mathcal{G}_n\mid \diam_{\mathcal{G}_n}(\uu_n^{-1}(\uu_n(x))\cup\uu_n^{-1}(\uu_n(x)h))\ge C\right\}\right)\]
which equals $0$ by (\ref{item:StatBiLGroup2}).

Let us prove the second statement. 
First by our assumption on the diameter of preimages, \ref{item:StatBiLGroup2} is equivalent to having for all $h\in H$,
	\[\lim_{C\to \infty}\lim_{\mathcal U}\PP_{\mathcal{G}_n}\left(\left\{y\in  \uu_n(\mathcal{G}_n)\mid \diam_{\mathcal{G}_n}(\uu_n^{-1}(y)\cup\uu_n^{-1}(yh))\ge C\right\}\right)= 0.\]
Now assume that the latter holds for $|h|\leq 2C'+1$.
We argue by contradiction, letting $h\in H$ be the shortest element for which we do not have convergence. First, note that $|h|\ge 2C'+2$. Thus, we can take $h_1,h_2\in H$ with $h=h_1h_2$, $|h_1|,|h_2|\ge C'+1$ and $|h_1|+|h_2|=|h|$. Next, for every $y\in\mathcal{H}_n$ there exists an $h_y\in B_H(e_H,C')$ such that $zh_1h_y\in \Im(\uu_n)$. Observe that $|h_z^{-1}h_1^{-1}h|=|h_z^{-1}h_2^{-1}|\leq 2C'+1$. 
Hence
\begin{align*}
	&\lim_{C\to \infty}\lim_{\mathcal U}\PP_{\mathcal{G}_n}\left(\left\{y\in \uu_n(\mathcal{G}_n)\mid \diam_{\mathcal{G}_n}(\uu_n^{-1}(y)\cup\uu_n^{-1}(yh))\ge C\right\}\right)\\
	& \le \lim_{C\to \infty}\sum_{h'\in B_H(e_H,C')} \lim_{\mathcal U}\PP_{\mathcal{G}_n}\left(\left\{y\in \uu_n(\mathcal{G}_n)\mid \diam_{\mathcal{G}_n}(\uu_n^{-1}(y)\cup\uu_n^{-1}(yh_1h'))\ge \frac{C}{2}\right\}\right)\\
	& + \lim_{C\to \infty}\sum_{h'\in B_H(e_H,C')} \lim_{\mathcal U}\PP_{\mathcal{G}_n}\left(\left\{y\in \uu_n(\mathcal{G}_n)\mid  \diam_{\mathcal{G}_n}(\uu_n^{-1}(yh_1h')\cup\uu_n^{-1}(yh))\ge \frac{C}{2}\right\}\right).
\end{align*}
Observe that $|h'^{-1}h_1^{-1}h|\leq 2C'+1$, hence both terms equal $0$, leading to a contradiction. This ends the proof.
\end{proof}

\section{Construction of a measure coupling}\label{BoxSpaces}
This section contains the core of the paper, namely the proofs of Theorems \ref{thm:Subgroup}, and \ref{thm:ME}.
It is organized as follows:
First, \S \ref{sec:couplingspace} provides a ``topological coupling'', namely a Polish metric space $\Omega$ on which the two groups admits free commuting actions by homeomorphisms. The topological space in question is roughly the set of functions from $G$ to $H$, slightly modified to ``force'' the $G$-action to be free. This coupling does not require any specific assumption on the groups, but is not yet equipped with an invariant measure. 

In \S \ref{section:non-standard}, we independently construct a measure coupling (subgroup/ME depending on the context), which is obtained as follows. We start taking an ultralimit of probability spaces $(X,\nu)=\lim_\mathcal{U}(\mathcal{G}_n,\PP_{\mathcal{G}_n})$, which comes equipped with a p.m.p. action of $G$, and with a cocycle $\overline{\alpha}: G\times X\to H$, whose existence, relying on the maps $\uu_n$,  is proved in \S \ref{sec:approxCocycle}. This part of the argument has been roughly outlined in the introduction.

We then push forward the measure $\nu$ on the space $\Omega$ through the map $X\to \Omega$ using the cocycle, yielding a measure coupling between the groups (either subgroup or ME depending on the conditions on $\uu_n$).


\subsection{The topological coupling space $\Omega$}\label{sec:couplingspace}

We shall make the following assumptions, depending on whether we aim to construct a ME-coupling between $G$ and $H$ or simply a measure subgroup coupling from $G$ to $H$.
\begin{itemize}
\item In the first case we let $(\mathcal{G}_n)_n$ and $(\mathcal{H}_n)_n$ be sofic approximations of $G$ and $H$ respectively and $\uu_n:\mathcal{G}_n\to \mathcal{H}_n$ be a sequence of maps. Various assumptions will be made on this sequence of maps, but ultimately, we will assume it to be $\mathcal{U}$-statistically a coarse equivalence. 
As we shall see partial results will be obtained under weaker assumptions, in particular assuming that $(\uu_n)_n$ is $\mathcal{U}$-statistically Lipschitz and $\mathcal{U}$-statistically coarsely bijective. 
\item In the second case, we will consider a sequence $\uu_n:\mathcal{G}_n\to H$, where $(\uu_n)_n$ is $\mathcal{U}$-statistically Lipschitz and $\mathcal{U}$-statistically coarsely injective (or simply injective, which should be enough for applications).
\end{itemize}

We start with a informal discussion to motivate our definition of the coupling space below.
The first idea is to work with the space $H^G$, which comes equipped with the action of $G\times H$:
$(g,h)\ast f=hf(g^{-1}\cdot)$. Note that $H$ acts freely, and a fundamental domain $X_H$ for $H$ is simply the set of functions mapping $e_G$ to $e_H$. Moreover the $G$-action is free in restriction to the subspace of injective maps: $G\to H$. 

The second step is to equip the space $H^G$ with a $G\times H$-invariant Borel measure. This can be done exploiting the sequence of maps $(\uu_n)_n$ (see \S \ref{sec:measure}). Under the additional assumption that the $\uu_n$ are  injective, the measure will be supported on injective maps, and so our $G$-action will be essentially free. In this case, $G$ acts essentially freely and it is easy to define a Borel fundamental domain for $G$ as well (in restriction to this subspace), which contains $X_H$. 
But a problem arises if we only assume that $(\uu_n)_n$ are $\mathcal{U}$-statistically coarsely injective. Under this sole assumption, the measure will only be supported on functions with finite pre-images, which is not sufficient to ensure that the $G$-action is essentially free. 

In \cite{Das}, Das considered a special case where $(\uu_n)_n$ were (uniform) quasi-isometries. Since quasi-isometries have pre-images of bounded cardinality, the problem was solved by replacing $H$ by $H\times F$ where $F$ is a finite group, and perturbating the maps $\uu_n$ to make them injective.

Here, in the absence of a uniform bound on the size of preimages, we will have to enlarge the space to $H\times \N$. To force the injectivity of the maps $\uu_n$, we  define auxilliary maps $\rho_n\colon \mathcal{G}_n\to \N$ as follows. 
For every $x\in \mathcal{G}_n$, we fix an arbitrary indexing of $\uu_n^{-1}(\{\uu_n(x)\})=\{x_0,\ldots, x_{m-1}\}$. We then define $\rho_n(x_i)=i$. This provides us with injective maps $\mathcal{G}_n\to \mathcal{H}_n\times \N\colon x\to (\uu_n(x),\rho_n(x))$. Moreover, this leads us to the following definition of coupling space (the relation between the two will become clear in \S \ref{sec:measure}). 

\begin{definition}[Coupling space]
Let $\Omega = (H \times \N)^G = H^G\times\N^G$, endowed with the topology of pointwise convergence, which is a Polish space. We consider the natural (continuous) action of $G\times H$: $(g,h)*(\aaa_H (x),\aaa_\N(x)) = (h\aaa_H(g^{-1}x), \aaa_{\N}(g^{-1}x))$ for $g,x\in G$, $h\in H$ and $(\aaa_H,\aaa_\N)\in (H \times \N)^G$. 
\end{definition}

Note that \[\Omega_0=\left\{(\aaa_H,\aaa_\N)\in \Omega\mid \aaa_H(e_G)=e_H\right\}\] defines a clopen fundamental domain for the action of $H$. 
\begin{definition}[Basis of the topology]
 Let $\Sigma\subseteq G$ be a finite subset, $\aaa\in \Omega$. Then, we define $$W_{\aaa,\Sigma} = \{\cc \in \Omega \mid \forall g\in \Sigma\colon \aaa(g)=\cc(g)\}.$$ 
 We denote by  $\A_\tau$ the collection of subsets  $W_{\aaa,\Sigma}$ and the emptyset.
\end{definition}

Clearly, $\A_\tau$ forms a basis for the topology on $\Omega$. The $\sigma$-algebra generated by $\A_\tau$ contains the entire topology $\tau$ of $\Omega$.  Indeed since $(\Omega,\tau)$ is second countable, every set in $\tau$ is a countable union of basis elements in $\A_\tau$. It follows that $\mathcal{B}(\Omega)=\sigma(\A_\tau)$ and $\left(\Omega,\mathcal{B}(\Omega)\right)$ is standard Borel. Note also that since $G$ is countable, the sets $W_{\aaa,\Sigma}$ are measurable for all (not necessarily finite) subsets $\Sigma\subset G$.

\begin{remark}\label{rem:Cone}
	For all $g\in G$, $h\in H$ and $W_{\aaa,\Sigma}\in \A_\tau$ we have that $$(g,h)*W_{\aaa,\Sigma}=W_{(g,h)*\aaa,g\Sigma}.$$
\end{remark}

\subsection{Almost cocycles}\label{sec:approxCocycle}

Our main tool is a notion of ``almost cocycle" associated to the maps $\uu_n$ and to the ``almost actions" of the groups $G$ and $H$ on their sofic approximations.  

The idea is to associate for every $n\in \N$ and every $x\in \mathcal{G}_n$ a map $T_n(\cdot,x): G\to H$ such that 
$\uu_n(xg)=\uu_n(x)T_n(g,x)$. Such map is defined unambiguously if $\uu_n$ takes values in $H$ (in our ``context of measured subgroups'').

We want to think of $T_n$ as a cocycle, i.e. a map satisfying the following cocycle relation
\begin{equation}\label{eq:Almostcocycle}
T_n(gg',x)=T_n(g,x)T_n(g',xg).
\end{equation}
Of course, such a cocycle does not exist as we don't have actions of $G$ and $H$ on $\mathcal{H}_n^{\mathcal{G}_n}$. Note that  even when such actions exist: e.g.\ if $\mathcal{H}_n$ and $\mathcal{G}_n$ are finite quotients of $H$ and $G$, the fact that $H$-action is not free forces $T_n$ to depend on some arbitrary choice that prevents it from satisfying (\ref{eq:Almostcocycle}). 
Here is a precise definition of $T_n$.

\begin{definition}
Let $G$ and $H$ be two groups with sofic approximations $(\mathcal{G}_n)_n$ and $(\mathcal{H}_n)_n$, and let $\uu_n:\mathcal{G}_n\to \mathcal{H}_n$ (or $\uu_n:\mathcal{G}_n\to H$) be a sequence of maps.
	For any $n\in\N$, we define a map $T_n\colon G\times \mathcal{G}_n\to H\cup\{\infty\}$ as follows. 
	If the following conditions are satisfied  (in case $\uu_n:\mathcal{G}_n\to H$, we omit the second condition): 
	\begin{itemize}
		\item 	$x\in \mathcal{G}_n^{(|g|)}$, 
		\item	$\uu_n(x)\in \mathcal{H}_n^{(r)},$ where $r=d(\uu_n(x),\uu_n(xg))$,	\end{itemize}
		then we let $T_n(g,x)=h$, where $h$ is the unique element of $H$ of length $r$ such that $\uu_n(xg)=\uu_n(x)h$ else we set $T_n(g,x)=\infty$. 
	
\end{definition}

For all $g,g'\in G$, we denote by $\mathcal{C}_n(g,g')$ the set of elements $x\in \mathcal{G}_n^{(|g|)}$ such that (\ref{eq:Almostcocycle}) makes sense and is satisfied. 

\begin{remark}\label{rem:C}
We start observing that $\mathcal{C}_n(g,g')$ is contained in the set $x\in \mathcal{G}_n^{(|g|)}$ such that $\uu_n(x)\in \mathcal{H}_n^{(r_n(x))},$ where $r_n(x)=\max\{d(\uu_n(x),\uu_n(xg)),d(\uu_n(x),\uu_n(xgg')\}$.
\end{remark}
The following proposition shows that $T_n$ plays the role of an ``almost cocycle" in the sense that it will be behave ``statistically'' like a cocycle.

\begin{proposition}\label{prop:approximatecocycle}
	Assuming 
	\begin{itemize}
	\item either that $\uu_n:\mathcal{G}_n\to H$ is {\bf  $\mathcal{U}$-statistically Lipschitz};  	
	\item or that $\uu_n:\mathcal{G}_n\to \mathcal{H}_n$ is {\bf  $\mathcal{U}$-statistically Lipschitz  and  $\mathcal{U}$-statistically coarsely bijective},
\end{itemize}	
then for every $g,g'\in G$,
	\[\lim_\mathcal{U}\PP_{\mathcal{G}_n}\left(\mathcal{C}_n(g,g')\right)= 1,\]
	and  \[\lim_{R\to \infty }\PP_{\mathcal{G}_n}\left( \left\{|T_n(g,x)|_{S_H}\leq R)\right\}\right)=1.\]
\end{proposition}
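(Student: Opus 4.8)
The plan is to prove both assertions at once, reducing everything to three ingredients already at our disposal: soficity in the form \eqref{eq:sofic}; the per-element reformulation of statistical Lipschitzness from \cref{prop:StatBiLGroups}, namely that for each fixed $g\in G$,
\[\lim_{R\to\infty}\lim_\mathcal U\PP_{\mathcal{G}_n}\!\left(\left\{x\mid d_{\mathcal{H}_n}(\uu_n(x),\uu_n(xg))>R\right\}\right)=0\]
(with $H$ in place of $\mathcal{H}_n$ in the measure subgroup case, where the proof of \cref{prop:StatBiLGroups} applies verbatim); and, in the coarsely bijective case, \cref{prop:coarseBij}, which yields $\lim_\mathcal U\PP_{\mathcal{G}_n}(\{x\mid\uu_n(x)\in\mathcal{H}_n^{(r)}\})=1$ for every fixed $r$. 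The key elementary observation is that whenever $T_n(g,x)\neq\infty$ its length equals exactly $d_{\mathcal{H}_n}(\uu_n(x),\uu_n(xg))$, so the boundedness assertion is a near-tautological consequence of statistical Lipschitzness, once one checks that the defining conditions $x\in\mathcal{G}_n^{(|g|)}$ and $\uu_n(x)\in\mathcal{H}_n^{(r)}$ hold with probability tending to $1$.

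For the boundedness statement I would observe the inclusion
\[\left\{|T_n(g,x)|_{S_H}\le R\right\}\supseteq \mathcal{G}_n^{(|g|)}\cap\left\{d_{\mathcal{H}_n}(\uu_n(x),\uu_n(xg))\le R\right\}\cap\left\{\uu_n(x)\in\mathcal{H}_n^{(R)}\right\},\]
valid because $d_{\mathcal{H}_n}(\uu_n(x),\uu_n(xg))\le R$ together with $\uu_n(x)\in\mathcal{H}_n^{(R)}\subseteq\mathcal{H}_n^{(d_{\mathcal{H}_n}(\uu_n(x),\uu_n(xg)))}$ forces $T_n(g,x)\neq\infty$ of length at most $R$. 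The complement of the right-hand side is contained in the union of $\mathcal{G}_n\setminus\mathcal{G}_n^{(|g|)}$, of $\{d_{\mathcal{H}_n}(\uu_n(x),\uu_n(xg))>R\}$, and of $\mathcal{G}_n\setminus\{\uu_n(x)\in\mathcal{H}_n^{(R)}\}$. Taking $\lim_\mathcal U\PP_{\mathcal{G}_n}$, the first term vanishes by \eqref{eq:sofic}, the third by \cref{prop:coarseBij} for each fixed $R$ (and is absent in the subgroup case), and the middle term tends to $0$ as $R\to\infty$ by statistical Lipschitzness, giving the second assertion.

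For the first assertion the heart is a deterministic sufficient condition for membership in $\mathcal{C}_n(g,g')$: writing $r_1=d_{\mathcal{H}_n}(\uu_n(x),\uu_n(xg))$ and $r_2=d_{\mathcal{H}_n}(\uu_n(xg),\uu_n(xgg'))$, I claim that if $x\in\mathcal{G}_n^{(|g|+|g'|)}$ and $\uu_n(x)\in\mathcal{H}_n^{(r_1+r_2)}$ then $x\in\mathcal{C}_n(g,g')$. Indeed, $x\in\mathcal{G}_n^{(|g|+|g'|)}$ ensures that $xg$ and $xgg'$ are defined and that $(xg)g'=xgg'$, since left translation is a label-preserving automorphism of the Cayley graph. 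Setting $y=\uu_n(x)$ and using the isomorphism $\theta_y\colon B_H(e_H,r_1+r_2)\to B_{\mathcal{H}_n}(y,r_1+r_2)$, one checks successively that $\uu_n(xg)=\theta_y(T_n(g,x))\in\mathcal{H}_n^{(r_2)}$, so that $T_n(g',xg)$ is defined; that the almost-action is locally associative on this ball, whence $y\cdot(T_n(g,x)T_n(g',xg))=(y\cdot T_n(g,x))\cdot T_n(g',xg)=\uu_n(xgg')$; and that $T_n(gg',x)$ is likewise defined with $y\cdot T_n(gg',x)=\uu_n(xgg')$, using $d_{\mathcal{H}_n}(y,\uu_n(xgg'))\le r_1+r_2$. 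Since $\theta_y$ is injective and both $T_n(gg',x)$ and $T_n(g,x)T_n(g',xg)$ lie in $B_H(e_H,r_1+r_2)$, they coincide, establishing \eqref{eq:Almostcocycle} at $x$. In the measure subgroup case this is simpler still: $\mathcal{C}_n(g,g')\supseteq\mathcal{G}_n^{(|g|+|g'|)}$, because $T_n(g,x)=\uu_n(x)^{-1}\uu_n(xg)$ makes the cocycle relation automatic.

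It then remains to see that this sufficient condition holds with probability tending to $1$. By the triangle inequality $r_1+r_2\le 2r_1+d_{\mathcal{H}_n}(\uu_n(x),\uu_n(xgg'))$, so $\{r_1+r_2>R\}$ is contained in $\{r_1>R/4\}\cup\{d_{\mathcal{H}_n}(\uu_n(x),\uu_n(xgg'))>R/2\}$, both statistically negligible as $R\to\infty$ by statistical Lipschitzness applied to $g$ and to $gg'$ respectively. Consequently, for each fixed $R$, the set $\mathcal{C}_n(g,g')$ contains $\mathcal{G}_n^{(|g|+|g'|)}\cap\{r_1+r_2\le R\}\cap\{\uu_n(x)\in\mathcal{H}_n^{(R)}\}$; bounding the complement exactly as in the second paragraph and letting $R\to\infty$ yields $\lim_\mathcal U\PP_{\mathcal{G}_n}(\mathcal{C}_n(g,g'))=1$. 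The main obstacle is the deterministic claim of the third paragraph: one must handle with care the local associativity of the $H$-almost-action on $\mathcal{H}_n$ and the fact that $\theta_y$ is a bijection whose inverse sends $\uu_n(xgg')$ to a single group element of length $d_{\mathcal{H}_n}(y,\uu_n(xgg'))$, so that no length defect can break the cocycle identity. The probabilistic estimates, by contrast, are routine given \cref{prop:StatBiLGroups,prop:coarseBij} and \eqref{eq:sofic}.
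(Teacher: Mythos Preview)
Your proof is correct and follows essentially the same route as the paper: isolate a deterministic sufficient condition for membership in $\mathcal{C}_n(g,g')$ in terms of $x\in\mathcal{G}_n^{(\cdot)}$ and $\uu_n(x)\in\mathcal{H}_n^{(\cdot)}$, then show it holds with $\lim_\mathcal U$-probability one via soficity, statistical Lipschitzness, and \cref{prop:coarseBij}. Your choice of radius $r_1+r_2$ (rather than the paper's $\max\{d(\uu_n(x),\uu_n(xg)),d(\uu_n(x),\uu_n(xgg'))\}$ from \cref{rem:C}) is in fact slightly more careful, since it directly guarantees that $T_n(g',xg)$ is defined; your explicit verification of the local associativity via $\theta_y$ fills in a detail the paper leaves implicit.
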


\begin{proof}
The first case is straightforward: the first statement directly stems from the fact that $\mathcal{G}_n$ is a sofic approximation of $G$, and the second one from the $\mathcal{U}$-statistical Lipschitz condition. Let us focus on the second case, which is more subtle.  By Remark \ref{rem:C} the first statement follows if we can prove that \[\lim_\mathcal{U}\PP_{\mathcal{G}_n}\left(\left\{x\in \mathcal{G}_n^{(r)}\mid \uu_n(x)\in \mathcal{H}_n^{r_n(x)} \right\}\right)= 1,\]
	where $r=\max\{g,gg'\}$, and $r(x)=\max\{d(\uu_n(x),\uu_n(xg)),d(\uu_n(x),\uu_n(xgg')\}$.

Since $(\uu_n)_n$ is statistically Lipschitz, we have 
	\[\lim_{R\to \infty}\lim_\mathcal{U}\PP_{\mathcal{G}_n}\left(\left\{x\in \mathcal{G}_n^{(r)}\mid r_n(x)\leq R \right\}\right)= 1.\]

Hence, it is enough to show that for every $R$, $\lim_\mathcal{U}\PP_{\mathcal{G}_n}\left(\left\{x\in \mathcal{G}_n^{(r)}\mid \uu_n(x)\notin \mathcal{H}_n^{(R)} \right\}\right)=0$, or simply that 
	\[\lim_\mathcal{U}\PP_{\mathcal{G}_n}\left(\left\{x\in \mathcal{G}_n\mid \uu_n(x)\notin \mathcal{H}_n^{(R)} \right\}\right)= 0.\]
 Now, since $\uu_n$ is  $\mathcal{U}$-statistically coarsely injective, we have that $\# \uu_n^{-1}(\uu_n(x))$ is bounded up to an arbitrarily small proportion of $x\in \mathcal{G}_n$. Therefore we are left to show that 
	\[\lim_\mathcal{U}\frac{\#\left(\mathcal{H}_n^{(R)}\right)^c}{\# \mathcal{G}_n}= 0,\]
	Finally, since the sequence $(\uu_n)_n$ is  $\mathcal{U}$-statistically coarsely surjective, we have, by \cref{prop:compaireSize}, that $\# \mathcal{G}_n\le C\# \mathcal{H}_n$ for some constant $C>0$, so the result follows from the fact that $\mathcal{H}_n$ is a sofic approximation. Thus, the first statement follows. The second statement results from the fact that for every $g$, with probability tending to $1$, $|T_n(g,x)|_{S_H}=d_{\mathcal{H}_n}(\uu_n(xg),\uu_n(x))$, combined with the fact that $\uu_n$ is $\mathcal{U}$-statistically Lipschitz (we leave the straightforward details to the reader). 

\end{proof}		

We end this paragraph with the following facts that will come in complement of the previous proposition. Recall that $\rho_n$ is the auxilliary function  $\mathcal{G}_n\to \N$ defined in \S \ref{sec:couplingspace}.

\begin{proposition}\label{prop:rho}
	Assuming that the sequence $(\uu_n)$ is {\bf $\mathcal{U}$-statistically coarsely injective}. Then, 
	\[\lim_{N\to \infty}\lim_\mathcal{U}\PP_{\mathcal{G}_n}\left(\left\{x\in \mathcal{G}_n\mid \rho_n(x)\leq N\right\}\right)= 1.\]
Moreover, for every $g\in G\setminus\{e_G\}$,
	\[\lim_\mathcal{U}\PP_{\mathcal{G}_n}\left(\left\{x\in \mathcal{G}_n\mid \uu_n(xg)=\uu_n(x) \;{\text and}\; \rho_n(xg)=\rho_n(x)\right\}\right)= 0,\]
	 and
	\[\lim_\mathcal{U}\PP_{\mathcal{G}_n}\left(\left\{x\in \mathcal{G}_n\mid \exists g\in G,\; \uu_n(x)=\uu_n(xg) \textnormal{ and }\rho_n(xg)= 0\right\}\right)= 1.\]
	In other words, 
	\[\lim_\mathcal{U}\PP_{\mathcal{G}_n}\left(\left\{x\in \mathcal{G}_n\mid \exists g\in G,\; T_n(g,x)=e_H \textnormal{ and }\rho_n(xg)= 0\right\}\right)= 1.\]
\end{proposition}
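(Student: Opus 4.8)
The plan is to handle the three assertions in turn, using only $\mathcal{U}$-statistical coarse injectivity together with the fact that sofic approximations have uniformly bounded degree, hence are uniformly locally finite: for every $R$ there is $N_R$ with $\#B_{\mathcal{G}_n}(x,R)\le N_R$ for all $n$ and all $x\in\mathcal{G}_n$. For the first assertion, I would note that $\rho_n(x)$ is the index of $x$ in an enumeration of its fiber $\uu_n^{-1}(\uu_n(x))$, so $\rho_n(x)<\#\uu_n^{-1}(\uu_n(x))$. Whenever $\diam_{\mathcal{G}_n}(\uu_n^{-1}(\uu_n(x)))<C$, the fiber is contained in $B_{\mathcal{G}_n}(x,C)$ and therefore has at most $N_C$ elements, so $\rho_n(x)<N_C$. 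Consequently, for every $C$,
\[
\lim_\mathcal{U}\PP_{\mathcal{G}_n}\left(\left\{x\mid \rho_n(x)\le N_C\right\}\right)\ge 1-\lim_\mathcal{U}\PP_{\mathcal{G}_n}\left(\left\{x\mid \diam_{\mathcal{G}_n}(\uu_n^{-1}(\uu_n(x)))\ge C\right\}\right),
\]
and letting $C\to\infty$ the right-hand side tends to $1$ by coarse injectivity. Since $N\mapsto \lim_\mathcal{U}\PP_{\mathcal{G}_n}(\{\rho_n\le N\})$ is non-decreasing and reaches values arbitrarily close to $1$ along the thresholds $N_C$, the first limit follows.

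For the second assertion, the two key facts are that $\rho_n$ is injective in restriction to each fiber (by construction it is a bijection of the fiber onto $\{0,\dots,m-1\}$), and that the almost-action is free where it is defined: if $x\in\mathcal{G}_n^{(|g|)}$ and $g\ne e_G$ then $xg=\theta_x(g)\ne\theta_x(e_G)=x$, since $\theta_x$ is a bijection. Now if $\uu_n(xg)=\uu_n(x)$ then $x$ and $xg$ lie in the same fiber, so $\rho_n(xg)=\rho_n(x)$ forces $xg=x$, contradicting freeness. Hence the set in question, read (per the Warning) inside $\mathcal{G}_n^{(|g|)}$, is empty, and its probability is $0$.

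The third assertion requires the most care, and its conversion between the fiber bookkeeping and the geometry of the almost-action is the main obstacle. Given $x$, let $x_0$ be the unique element of its fiber with $\rho_n(x_0)=0$; the plan is to realise $x_0$ as a translate $xg$. If $x\in\mathcal{G}_n^{(C)}$ and $\diam_{\mathcal{G}_n}(\uu_n^{-1}(\uu_n(x)))<C$, then $d_{\mathcal{G}_n}(x,x_0)<C$, so $x_0\in B_{\mathcal{G}_n}(x,C)$; since $\theta_x\colon B_G(e_G,C)\to B_{\mathcal{G}_n}(x,C)$ is an isomorphism of labelled graphs, the element $g:=\theta_x^{-1}(x_0)$ satisfies $|g|<C$ and $xg=x_0$, whence $\uu_n(xg)=\uu_n(x_0)=\uu_n(x)$ and $\rho_n(xg)=\rho_n(x_0)=0$. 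This gives the inclusion
\[
\left\{x\in\mathcal{G}_n^{(C)}\mid \diam_{\mathcal{G}_n}(\uu_n^{-1}(\uu_n(x)))<C\right\}\subseteq \left\{x\mid \exists g\in G,\ \uu_n(x)=\uu_n(xg)\text{ and }\rho_n(xg)=0\right\},
\]
so taking $\lim_\mathcal{U}$ and using $\lim_\mathcal{U}\PP_{\mathcal{G}_n}(\mathcal{G}_n^{(C)})=1$ yields, for every $C$,
\[
\lim_\mathcal{U}\PP_{\mathcal{G}_n}\left(\left\{x\mid \exists g,\ \uu_n(x)=\uu_n(xg),\ \rho_n(xg)=0\right\}\right)\ge 1-\lim_\mathcal{U}\PP_{\mathcal{G}_n}\left(\left\{x\mid \diam_{\mathcal{G}_n}(\uu_n^{-1}(\uu_n(x)))\ge C\right\}\right).
\]
Letting $C\to\infty$ gives the claim. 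Finally, the reformulation in terms of $T_n$ is immediate: on the event just produced one has $x\in\mathcal{G}_n^{(|g|)}$ and $d(\uu_n(x),\uu_n(xg))=0$, so $T_n(g,x)$ is defined and equals the unique length-$0$ element $e_H$.
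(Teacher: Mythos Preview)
Your proof is correct and follows essentially the same approach as the paper's own proof: bounding $\rho_n(x)$ by the fiber cardinality (hence by the volume of a ball of radius $C$) for the first assertion, using injectivity of $\rho_n$ on fibers together with freeness of the almost-action on $\mathcal{G}_n^{(|g|)}$ for the second, and for the third identifying the index-$0$ element of the fiber as a translate $xg$ once $x\in\mathcal{G}_n^{(C)}$ and the fiber has diameter below $C$. You have simply spelled out in more detail the steps the paper summarises.
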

\begin{proof}
By definition of $\rho_n$, $\rho_n(x)$ is at most the cardinality of $\uu_n^{-1}(\uu_n(x))$. Hence the first statement is an immediate consequence of $\mathcal{U}$-statistical coarse injectivity. Let $r_n:=\diam (\uu_n^{-1}(\uu_n(x)))$.
The second statement follows from the definition of $\rho_n$ and the fact that  $\lim_\mathcal{U}\PP_{\mathcal{G}_n}\left(\mathcal{G}_n^{(|g|)}\right)=1$. Indeed, if $x\in \mathcal{G}_n^{(|g|)}$, then $xg$ exists and is distinct from $x$, which means that either $\uu_n(x)\neq \uu_n(xg)$, or that  $\rho_n(xg)\neq \rho_n(x)$.

	For the last statement, note that if $x\in \mathcal{G}_n^{(C)}$ and $\diam(\uu_n^{-1}(\uu_n(x)))\le C$, then there exists $g\in \uu_n^{-1}(\uu_n(x))$ of length $\leq C$ such that $\rho_n(xg)= 0$. Hence, the conclusion follows from the facts that $\lim_\mathcal{U}\PP_{\mathcal{G}_n}\left(\mathcal{G}_n^{(C)}\right)=1$, and the $\mathcal{U}$-statistical coarse injectivity of $\uu_n$.
\end{proof}

\subsection{A non-standard coupling measure space $(\overline{\Omega},\overline{\mu})$}\label{section:non-standard}
In this section, we assume that $\uu_n:\mathcal{G}_n\to \mathcal{H}_n$ satisfy the conditions of Proposition \ref{prop:approximatecocycle}, i.e.\ 
\begin{itemize}
	\item either that $\uu_n:\mathcal{G}_n\to H$ and  $\uu_n:\mathcal{G}_n\to \mathcal{H}_n$ is {\bf  $\mathcal{U}$-statistically Lipschitz};	\item or that $\uu_n:\mathcal{G}_n\to \mathcal{H}_n$ is {\bf  $\mathcal{U}$-statistically Lipschitz  and  $\mathcal{U}$-statistically coarsely bijective},
\end{itemize}	

Here, we construct a non-standard measure coupling between $G$ and $H$. 

We define \[(X,\nu)=[\mathcal{G}_n]_{\mathcal{U}},\]
as in \S \ref{sec:Loeb}.
The fact that $\mathcal{G}_n$ is a sofic approximation implies that for every $g\in G$, $\PP_{\mathcal{G}_n}(\mathcal{G}_n^g)\to 1$ as $n\to \infty$. In other word, the subset 
\[X'=\bigcap_{g\in G}[\mathcal{G}_n^g]_\mathcal{U}\]
has full measure in $X$. We denote by $\nu'$ the restriction of $\nu$ to $X'$.
We deduce that $G$ acts by measure-preserving transformation on $(X',\nu')$ through the following formula
\[g\cdot [x_n]_\mathcal{U}=[x_ng^{-1}]_\mathcal{U}.\]
One easily checks that the corresponding action is free.

By Proposition \ref{prop:approximatecocycle}, $[\mathcal{C}_n(g,g')]_\mathcal{U}$ has full measure in $X$, as well as $\bigcup_R\left\{x\mid |T_n(g,x)|_{S_H}\leq R)\right\}$, for all $g,g'\in G$ (observe that both are contained in $X'$). Therefore, for all $x\in Y$, $T(g,x)=[T_n(g,x_n)]_\mathcal{U}$ is a well-defined map from $G$ to $H$.
Also note that, by compactness of $\N\cup \{\infty\}$, $\rho(x):=[\rho_n(x_n)]_\mathcal{U}$ exists, and by Proposition \ref{prop:rho}, there exists a subset $Y\subset X$  of full measure such that for all $x\in Y$, $\rho(x)\in \N$, $g\to (T(g,x),\rho(g^{-1}\cdot x))$ is injective and there exists $g\in G$ such that $T(g,x)=e_H$ and $\rho(g^{-1}\cdot x)=0$.

Let \[X''=\bigcap_{g,g'\in G} [\mathcal{C}_n(g,g')]_\mathcal{U},\]
\[X'''= \bigcap_{g\in G}\bigcup_R [\left\{x\mid |T_n(g,x)|_{S_H}\leq R)\right\}]_\mathcal{U},\] 
and let 
\[\overline{\Omega}_0 = \bigcap_{g\in G}g\cdot (X''\cap X'''\cap  Y).\] 
Note that $\overline{\Omega}_0$ is a full-measure, $G$-invariant subset of $X'$. We denote by $\bar{\mu}_0$ the restriction of $\nu$ to $\overline{\Omega}_0$.

Note that by definition of $\overline{\Omega}_0$ the restriction of $T$ to 
$G\times \overline{\Omega}_0$ satisfies the cocycle relation
\begin{equation}\label{eq:cocycle}
T(gg',x)=T(g,x)T(g',g^{-1}\cdot x).
\end{equation}

We now consider the space $(\overline{\Omega},\overline{\mu})=(\overline{\Omega}_0,\overline{\mu}_0)\times H$, where $H$ is equipped with the counting measure. 
We define a measure preserving action of $G\times H$-action on $(\overline{\Omega},\overline{\mu})$ in the usual way:
\[(g,h)\ast (x,k)=(g\cdot x,hkT(g^{-1},x)).\]
Observe that since the action of $G$ on $\overline{\Omega}_0$ is free, we deduce that the $G\times H$ action on $\overline{\Omega}$ is free.

Denote $\overline{\alpha}:\overline{\Omega}_0\times G\to H$, the corresponding cocycle, defined such that $\overline{\alpha}(g,x)$ is the unique element $h\in H$ such that $(g,h)\ast x\in \overline{\Omega}_0$. We deduce that $\overline{\alpha}(g,x)=T(g^{-1},x)^{-1}$.

We gather the main conclusions of this discussion in the following proposition.

\begin{proposition}\label{prop:nonstandardCoupling}
The measure space $(\overline{\Omega},\overline{\mu})$ comes equipped with a free measure-preserving action of $G\times H$, such that $\overline{\Omega}_0$ is a fundamental domain  of $H$ with measure $1$. The associated cocycle $\overline{\alpha}:\overline{\Omega}_0\times G\to H$, satisfies $\overline{\alpha}(g,x)=T(g^{-1},x)^{-1}$. Besides, for all $x\in \overline{\Omega}_0$, $g\to (T(g,x),\rho(g^{-1}\cdot x))$ is injective. Finally, for all $x\in \overline{\Omega}_0$, there exists $g\in G$ such that $T(g,x)=e_H$ and $\rho(g^{-1}\cdot x)=0$.
\end{proposition}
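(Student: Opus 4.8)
The plan is to assemble \cref{prop:nonstandardCoupling} by verifying each of its assertions against the constructions of \S\ref{section:non-standard}, treating the statement essentially as a summary of facts that have been set up but not yet packaged together. First I would recall that $(X,\nu)=[\mathcal{G}_n]_{\mathcal U}$ is a probability space by the Loeb construction of \S\ref{sec:Loeb}, and that $\overline{\Omega}=\overline{\Omega}_0\times H$ carries the $\sigma$-finite product measure $\overline{\mu}=\overline{\mu}_0\times(\text{counting})$. The bulk of the verification reduces to three independent checks: that the $G\times H$-action is free and measure-preserving, that $\overline{\Omega}_0$ is a fundamental domain for $H$ of measure $1$, and that the injectivity and surjectivity-type properties of $g\mapsto(T(g,x),\rho(g^{-1}\cdot x))$ hold pointwise on $\overline{\Omega}_0$.

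For the action, I would argue that $G$ acts on $(X',\nu')$ by $g\cdot[x_n]_{\mathcal U}=[x_ng^{-1}]_{\mathcal U}$ in a measure-preserving and free way, since each $\mathcal{G}_n$ carries the uniform measure and the right almost-action is free wherever it is defined (on $\bigcap_g[\mathcal{G}_n^g]_{\mathcal U}$, which has full measure by soficity). The product action $(g,h)\ast(x,k)=(g\cdot x,hkT(g^{-1},x))$ then preserves $\overline{\mu}$ because $H$ acts by left translation on the counting-measure factor and $G$ preserves $\overline{\mu}_0$; freeness of the product action follows because freeness of the $G$-action forces the $x$-coordinate to move whenever $g\neq e_G$, and the $H$-factor moves whenever $h\neq e_H$. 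The cocycle relation \eqref{eq:cocycle} for $T$ on $G\times\overline{\Omega}_0$, already recorded in the excerpt, is exactly what makes this a genuine action rather than a mere family of maps, and from it I would read off that $\overline{\alpha}(g,x)$, the unique $h$ returning $(g,h)\ast x$ to $\overline{\Omega}_0$, equals $T(g^{-1},x)^{-1}$ by a direct computation using the definition of the action.

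That $\overline{\Omega}_0$ is a fundamental domain for $H$ of measure $1$ is almost immediate: $\overline{\Omega}_0$ has the form $\overline{\Omega}_0\times\{e_H\}$ inside $\overline{\Omega}=\overline{\Omega}_0\times H$, the $H$-action permutes the second factor by left translation, so the $H$-translates of $\overline{\Omega}_0$ are exactly the slices $\overline{\Omega}_0\times\{h\}$, which tile $\overline{\Omega}$ disjointly; and $\overline{\mu}_0(\overline{\Omega}_0)=\nu(\overline{\Omega}_0)=1$ since $\overline{\Omega}_0$ is a full-measure subset of $X'\subseteq X$ and $\nu$ is a probability measure. Finally, the pointwise statements about $g\mapsto(T(g,x),\rho(g^{-1}\cdot x))$ are precisely what was built into the full-measure set $Y$ via \cref{prop:rho}: injectivity and the existence of $g$ with $T(g,x)=e_H$, $\rho(g^{-1}\cdot x)=0$ hold on $Y$, hence on $\overline{\Omega}_0\subseteq\bigcap_g g\cdot Y$, because $\overline{\Omega}_0$ is $G$-invariant and these are $G$-equivariant conditions.

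I expect the main obstacle to be bookkeeping rather than any deep difficulty: one must be careful that every ultralimit-defined object ($T(g,x)$, $\rho(x)$, the cocycle relation) is genuinely well-defined \emph{pointwise} on $\overline{\Omega}_0$ and not merely almost everywhere, which is exactly why $\overline{\Omega}_0$ was cut out as the $G$-invariant intersection $\bigcap_{g}g\cdot(X''\cap X'''\cap Y)$ rather than as a full-measure set defined up to null sets. The subtle point is checking that intersecting over all $g\in G$ still leaves a full-measure set; this is where countability of $G$ together with the countable-intersection argument (each $g\cdot(X''\cap X'''\cap Y)$ is conull and $G$ is countable) is essential, and I would make sure to invoke it explicitly before concluding that $\overline{\mu}_0(\overline{\Omega}_0)=1$.
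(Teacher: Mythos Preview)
Your proposal is correct and mirrors the paper's own treatment: the proposition is stated there as a summary of the discussion in \S\ref{section:non-standard}, and your verification unpacks exactly the same ingredients (freeness via the free $G$-action on $\overline{\Omega}_0$, the product-structure argument for the $H$-fundamental domain, the cocycle formula read off from the action, and the pointwise properties inherited from $Y$ via \cref{prop:rho}). One minor simplification: since $\overline{\Omega}_0\subseteq Y$ already (take $g=e_G$ in the intersection), the injectivity and existence-of-$g$ properties transfer directly without needing to invoke $G$-equivariance.
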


Let us conclude this section with a remark about the map $\rho$. Clearly the coupling space $\overline{\Omega}$ does not at all involve $\rho$. But the reason why we keep track of it is that it will be needed in the next section when factorizing $\overline{\Omega}$ to our topological coupling space $\Omega$ from \S \ref{sec:couplingspace}.

\subsection{From $\overline{\Omega}$ to $\Omega$}\label{sec:measure}
Here, we will factor our non-standard coupling constructed in the previous section to the topological coupling space $\Omega=(H \times \N)^G$ described in \S \ref{sec:couplingspace}. 

\subsubsection{Factorising to $\Omega$}
We now define a $G\times H$-equivariant map $J:\overline{\Omega}\to \Omega$ as follows: 
We first define the following map $J_0:\overline{\Omega}_0\to \Omega_0$ by
\[J_0(x)(g)=(T(g,x),\rho(g^{-1}\cdot x))\]
We finally define $J:\overline{\Omega}=\overline{\Omega}_0\times H\to \Omega$ as follows

 \[J(x,h)(g)=(e_G,h)\ast J_0(x)(g)=(hT(g,x),\rho(g^{-1}\cdot x)).\]
Equivariance with respect to the $H$-action is obvious,  so let us check the $G$-equivariance: pick $g,g'\in G$, and let us calculate
\[J((g,e_H)\ast (x,h))(g')=J(g\cdot x,hT(g^{-1},x))(g')=(hT(g^{-1},x)T(g',g\cdot x),\rho(g'^{-1}\cdot (g\cdot x)))\]
Using (\ref{eq:cocycle}), we have $T(g^{-1},x)T(g',g\cdot x)=T(g^{-1}g',x)$, while $\rho(g'^{-1}\cdot (g\cdot x))=\rho((g^{-1}g')^{-1}\cdot x)$.
Hence 
\[J((g,e_H)\ast (x,h))(g')=(hT(g^{-1}g',x),\rho((g^{-1}g')^{-1}\cdot x))=J(x,h)(g^{-1}g'),\]
and we are done.

\begin{lemma}
The map $J$ is measurable with respect to the $\sigma$-algebras $\mathcal{B}(\Omega)$ and $\mathcal{B}_0(\overline{\Omega})$, where the latter is the $H$-invariant $\sigma$-algebra on $\overline{\Omega}$ spanned by subsets of the form  
$[A_n]_\mathcal{U}$, for any sequence $A_n\in  \mathcal{G}_n$.
\end{lemma}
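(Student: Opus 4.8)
The claim is that the map $J\colon \overline\Omega \to \Omega$ is measurable with respect to $\mathcal B(\Omega)$ and the $H$-invariant $\sigma$-algebra $\mathcal B_0(\overline\Omega)$ on $\overline\Omega$ spanned by sets of the form $[A_n]_{\mathcal U}$. Since $\mathcal B(\Omega)=\sigma(\A_\tau)$ and $\A_\tau$ is the collection of basic cylinders $W_{\aaa,\Sigma}$ with $\Sigma\subset G$ finite, it suffices to show that $J^{-1}(W_{\aaa,\Sigma})\in\mathcal B_0(\overline\Omega)$ for each such basic set, and the general case then follows because preimages commute with the $\sigma$-algebra operations (countable unions, complements) and because $G$ is countable.

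\emph{First step: reduce to single evaluations.} A basic cylinder $W_{\aaa,\Sigma}$ is a finite intersection over $g\in\Sigma$ of the conditions ``$J(x,h)(g)=\aaa(g)$'', so it is enough to prove that for each fixed $g\in G$ and each fixed value $(h_0,m)\in H\times\N$, the set $\{(x,h)\in\overline\Omega\mid J(x,h)(g)=(h_0,m)\}$ lies in $\mathcal B_0(\overline\Omega)$. Unwinding the definition $J(x,h)(g)=(hT(g,x),\rho(g^{-1}\cdot x))$, this set is the intersection of
\[
\{(x,h)\mid hT(g,x)=h_0\}\quad\text{and}\quad\{(x,h)\mid \rho(g^{-1}\cdot x)=m\}.
\]
Because $H$ is countable, fixing the $H$-coordinate is harmless: it suffices to check measurability of the two evaluation maps $(x,h)\mapsto T(g,x)$ and $(x,h)\mapsto \rho(g^{-1}\cdot x)$ into the discrete spaces $H$ and $\N$.

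\emph{Second step: measurability of the two evaluation maps.} This is where the content lies. Both $T(g,\cdot)=[T_n(g,\cdot)]_{\mathcal U}$ and $\rho(g^{-1}\cdot\,\cdot)=[\rho_n(g^{-1}\cdot\,\cdot)]_{\mathcal U}$ are ultralimits of functions defined on the finite spaces $\mathcal G_n$, taking values in the \emph{countable} discrete targets $H$ and $\N$ (on the full-measure set $\overline\Omega_0$ where they are finite, by Propositions~\ref{prop:approximatecocycle} and \ref{prop:rho}). The plan is to fix a value $h\in H$ and write, for the level set of $T(g,\cdot)$,
\[
\{x\in\overline\Omega_0\mid T(g,x)=h\}=[A_n]_{\mathcal U}\cap\overline\Omega_0,\qquad A_n:=\{x_n\in\mathcal G_n\mid T_n(g,x_n)=h\},
\]
up to a null set: indeed, by property~(1) of Loeb's theorem, $[A_n]_{\mathcal U}$ is measurable, and since $T(g,x)=[T_n(g,x_n)]_{\mathcal U}$ one checks that $x\in[A_n]_{\mathcal U}$ iff $T_n(g,x_n)=h$ for $\mathcal U$-almost every $n$, which for a value in the discrete set $H$ is exactly $T(g,x)=h$. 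The sets $A_n\subset\mathcal G_n$ are arbitrary subsets of the finite spaces, so $[A_n]_{\mathcal U}\in\mathcal B_0(\overline\Omega_0)$ by definition of that $\sigma$-algebra. The same argument applies verbatim to $\rho(g^{-1}\cdot x)=[\rho_n(g^{-1}\cdot x_n)]_{\mathcal U}$, noting that $g^{-1}\cdot x$ has representative $(x_n g)_n$ so the relevant subsets $\{x_n\mid \rho_n(x_n g)=m\}$ are again honest subsets of $\mathcal G_n$. Finally, since each level set is pulled back from $\overline\Omega_0$ and $\mathcal B_0(\overline\Omega)$ is $H$-invariant by hypothesis, the corresponding sets in $\overline\Omega=\overline\Omega_0\times H$ (obtained by fixing the $H$-coordinate, a countable operation) remain in $\mathcal B_0(\overline\Omega)$.

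\emph{Main obstacle.} The only genuinely delicate point is the identity ``$x\in[A_n]_{\mathcal U}$ iff the ultralimit value equals $h$'', i.e.\ that passing to the ultralimit commutes with taking level sets, together with the bookkeeping that all auxiliary full-measure sets ($X'$, $X''$, $X'''$, $Y$, hence $\overline\Omega_0$) are themselves of the form $[B_n]_{\mathcal U}$ up to null sets and thus lie in $\mathcal B_0(\overline\Omega)$. For $H$-valued and $\N$-valued maps this commutation holds on the nose because the targets are discrete and countable, so a value is determined by a $\mathcal U$-large set of indices; I expect this to be routine but it is the crux, and I would state it as the key lemma and verify it carefully before assembling the countable Boolean combinations.
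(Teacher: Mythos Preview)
Your proposal is correct and follows essentially the same approach as the paper: reduce to basic cylinders $W_{\aaa,\Sigma}$, then show that the preimage is described by conditions of the form $T_n(g,x_n)=h'$ and $\rho_n(x_n g)=m$, which cut out sets of the form $[A_n]_{\mathcal U}$.

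The only cosmetic difference is in the reduction step. The paper first translates $W_{\aaa,\Sigma}$ by a suitable element of $G\times H$ so that $e_G\in\Sigma$ and $\aaa_H(e_G)=e_H$, forcing $W_{\aaa,\Sigma}\subset\Omega_0$; then $J^{-1}(W_{\aaa,\Sigma})$ lands in $\overline\Omega_0\times\{e_H\}$ and equals a \emph{single} set $[A_n]_{\mathcal U}\times\{e_H\}$. This uses the $G\times H$-equivariance of $J$ together with the (tacitly used but easily checked) $G$-invariance of $\mathcal B_0(\overline\Omega)$, since $g\cdot[A_n]_{\mathcal U}=[A_n g^{-1}]_{\mathcal U}$. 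You instead keep $W_{\aaa,\Sigma}$ general and decompose $J^{-1}$ as a countable union over $h\in H$ of slices $\{x\mid T(g,x)=h^{-1}h_0,\ \rho(g^{-1}\cdot x)=m\}\times\{h\}$. Your route is slightly more hands-on but avoids invoking $G$-invariance of $\mathcal B_0$; the paper's route is terser but leans on equivariance. Both land on the same key identity $\{x\mid T(g,x)=h'\}=[\{x_n\mid T_n(g,x_n)=h'\}]_{\mathcal U}$, which is exactly the ``main obstacle'' you correctly identified and which holds because the target is discrete.
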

\begin{proof}
To check measurability, it is enough to show that $J^{-1}({W_{\aaa,\Sigma}})\in \mathcal{B}_0(\overline{\Omega})$ for every $W_{\aaa,\Sigma}\in \B_\tau$. On translating by a suitable element of $G\times H$, we are reduced to the case where $e_G\in \Sigma$ and $\aaa(e_G)=1$, in which case $W_{\aaa,\Sigma}\in \Omega_0$. It is then enough to check that $J_0^{-1}(W_{\aaa,\Sigma})\in \mathcal{B}_0(\overline{\Omega_0})$. But it follows from its definition that $x=[x_n]_\mathcal{U}\in J_0^{-1}(W_{\aaa,\Sigma})$ if and only if for all $b\in \Sigma$, $T(x,b)=\aaa_H(b)$ and $\rho(b\cdot x)= \aaa_\N(b)$, which translates into: for all $b\in \Sigma$, and for $\mathcal{U}$-almost all $n$, $T_n(x_n,b)=\aaa_H(b)$ and $\rho_n(b\cdot x_n)= \aaa_\N(b)$. Hence,
\begin{multline*}
J^{-1}(W_{\aaa,\Sigma})=J_0^{-1}(W_{\aaa,\Sigma}) \times \{e_H\} =   \left\{x\in X \mid T(b,x)=\aaa_H(b), \rho(x_nb)= \aaa_\N(b)\right\} \times \{e_H\} \\
= \left[\left\{x_n\in \mathcal{G}_n\mid T_n(b,x_n)=\aaa_H(b), \rho_n(x_nb) = \aaa_\N(b)\right\}\right]_\mathcal{U} \times \{e_H\},
\end{multline*}                                                                                    
which belongs to  $\mathcal{B}_0(\overline{\Omega})$, so we are done.
\end{proof}

We can therefore push the measure $\overline{\mu}$ to a $G\times H$-invariant Borel measure $\mu$ on $\Omega$. We claim that $J_0(\overline{\Omega}_0)$ is contained in the Borel subset $\Omega''_0$ of $\Omega_0$ consisting of injective maps $\aaa=(\aaa_H,\aaa_\N)$ such that $\aaa_H(e_G)=e_H$, and such that for all $g\in G$, there exists $g'\in G$ such that $\aaa(g')=(\aaa_H(g),0)$. Indeed, let $\aaa$ be in $J_0(\overline{\Omega}_0)$. Hence there is $x\in \overline{\Omega}_0$ such that $\aaa(g)=(T(g,x),\rho(g^{-1}\cdot x))$. Injectivity of $\aaa$ follows from \cref{prop:nonstandardCoupling}.  Fix some $g\in G$, and observe that $J_0(g^{-1}\cdot x)=(\aaa_H(g\; \cdot) \aaa_H(g)^{-1}, \aaa_\N(g\;\cdot)$. Since $\overline{\Omega}_0$ is $G$-invariant, we have $g\cdot x_0\in \overline{\Omega}_0$. By \cref{prop:nonstandardCoupling}, there exists $g'$ such that $T(g',g^{-1}\cdot x)=e_H$ and $\rho(g'^{-1}g^{-1}\cdot x)=0$. This translates into $\aaa_H(gg')=\aaa(g)$ and $\aaa_\N(gg'\;\cdot)=0$, and our claim is proved.

Likewise,  $J(\overline{\Omega})$ is contained in the Borel subset $\Omega''$ of $\Omega$ consisting of injective maps $\aaa$ such that for all $g\in G$, there exists $g'\in G$ such that $\aaa(g')=(\aaa_H(g),0)$. 
This yields the following proposition.
 
\begin{proposition}\label{prop:existenceofmu}
	There exists a Borel measure $\mu$ on $\Omega$ that is $G\times H$-invariant and such that $\mu(\Omega_0)=1$. Moreover, $\mu$ is supported on the Borel subset consisting of injective maps $\aaa$ such that for all $g\in G$, there exists $g'\in G$ such that $\aaa(g')=(\aaa_H(g),0)$. 
\end{proposition}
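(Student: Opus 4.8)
The plan is to realize $\mu$ as the pushforward of the non-standard coupling measure $\overline{\mu}$ under the equivariant factor map $J$ built above. Concretely, I would set $\mu := J_{*}\overline{\mu}$, that is $\mu(A)=\overline{\mu}(J^{-1}(A))$ for every $A\in\mathcal{B}(\Omega)$. This is a well-defined Borel measure: the lemma preceding the statement shows that $J$ is measurable from $(\overline{\Omega},\mathcal{B}_0(\overline{\Omega}))$ to $(\Omega,\mathcal{B}(\Omega))$, so $J^{-1}(A)$ always lies in the domain of $\overline{\mu}$. Moreover $\mu$ is $\sigma$-finite, since $\overline{\mu}=\overline{\mu}_0\times(\text{counting measure on }H)$ is $\sigma$-finite and pushforwards preserve this.

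For invariance I would combine the equivariance of $J$ with the $G\times H$-invariance of $\overline{\mu}$ from \cref{prop:nonstandardCoupling}. Writing the action as $\ast$, the equivariance relation $J((g,h)\ast z)=(g,h)\ast J(z)$ gives $J^{-1}((g,h)\ast A)=(g,h)\ast J^{-1}(A)$ for every Borel $A$, so that
\[\mu((g,h)\ast A)=\overline{\mu}\bigl((g,h)\ast J^{-1}(A)\bigr)=\overline{\mu}(J^{-1}(A))=\mu(A).\]

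The only genuine computation is the normalization $\mu(\Omega_0)=1$, and here I would pin down $J^{-1}(\Omega_0)$ exactly. Since $T(e_G,x)=e_H$ for every $x$ (by definition of $T_n$, as $\uu_n(x\cdot e_G)=\uu_n(x)$), we have $J(x,h)(e_G)=(h,\rho(x))$, so $J(x,h)\in\Omega_0$ if and only if $h=e_H$; hence $J^{-1}(\Omega_0)=\overline{\Omega}_0\times\{e_H\}$ and
\[\mu(\Omega_0)=\overline{\mu}\bigl(\overline{\Omega}_0\times\{e_H\}\bigr)=\overline{\mu}_0(\overline{\Omega}_0)=1,\]
using that $\overline{\Omega}_0$ is a fundamental domain for $H$ of measure $1$.

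Finally, the support statement follows directly from the containment $J(\overline{\Omega})\subseteq\Omega''$ established just before the proposition, where $\Omega''$ is the Borel set of injective maps $\aaa$ such that for every $g\in G$ there is $g'\in G$ with $\aaa(g')=(\aaa_H(g),0)$: indeed $J(\overline{\Omega})\subseteq\Omega''$ forces $J^{-1}(\Omega\setminus\Omega'')=\emptyset$, whence $\mu(\Omega\setminus\Omega'')=0$. I do not expect a serious obstacle here; the substance has been front-loaded into the measurability lemma and the containment $J(\overline{\Omega})\subseteq\Omega''$, so that this proposition is essentially a bookkeeping consequence of the pushforward construction, the only delicate point being the exact identification $J^{-1}(\Omega_0)=\overline{\Omega}_0\times\{e_H\}$ needed to fix the total mass.
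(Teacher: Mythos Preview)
Your proposal is correct and follows exactly the paper's approach: the proposition is stated as a summary of the pushforward construction carried out just before it, and the paper's own argument is precisely to define $\mu=J_*\overline\mu$, invoke the measurability lemma and the equivariance of $J$, and use the containment $J(\overline\Omega)\subseteq\Omega''$ for the support claim. Your explicit identification $J^{-1}(\Omega_0)=\overline\Omega_0\times\{e_H\}$ via $T(e_G,x)=e_H$ is a clean way to pin down $\mu(\Omega_0)=1$, which the paper leaves implicit in the fact that $J_0$ maps $\overline\Omega_0$ into $\Omega_0$ and $J$ extends $J_0$ $H$-equivariantly.
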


\subsubsection{Computing the measure of basic subsets}\label{sec:BasicSubsets}

We close this section by computing the measure of subsets in $\A_\tau$ in terms of ultralimits of measures of subsets of $\mathcal{G}_n$. Similar computations will be crucial in the next paragraph for proving that $G$ admits a fundamental domain of finite measure. We point out that such computations can be ignored for statements relative to measure subgroups.

Note that for any $W=W_{\aaa,\Sigma}$, with $\Sigma\subset G$ finite and non-empty, we have 
\[J^{-1}(W)=\Big\{(x,h)\in \overline{\Omega}\mid
\forall g\in \Sigma\colon T(g,x)=h\aaa_H(g), \;\rho(g^{-1}\cdot x)= \aaa_\N(g)\Big\}.\]

Since $\Sigma$ is non-empty, for each $x\in \overline{\Omega}_0$, there is at most one $h\in H$ such that $(x,h)\in J^{-1}(W)$.
Hence we deduce that $J^{-1}(W)$ has the same measure as
\begin{equation}\label{eq:ProjectedSet}
\Big\{x\in \overline{\Omega}_0\mid \exists h\in H\colon \forall g\in \Sigma\colon T(g,x)=h\aaa_H(g), \;\rho(g^{-1}\cdot x)= \aaa_\N(g)\Big\}.
\end{equation}
Fix some $g_0\in \Sigma$.  Then for any $x$ in the above set we have that $h$ must necessarily be equal to $T(g_0,x)\aaa_H(g_0)^{-1} = \lim_\mathcal{U}T_n(g_0,x_n)\aaa_H(g_0)^{-1}$. Hence, the set in equation (\ref{eq:ProjectedSet}) is equal to\footnote{Note that if $\Sigma=\{g_0\}$, then this is simply $\{x\in \overline{\Omega}_0\mid \rho(g_0^{-1}\cdot x)= \aaa_\N(g_0)\}.$}
\[\Big\{x\in \overline{\Omega}_0\mid  \forall g\in \Sigma\colon T(g,x)=T(g_0,x)\aaa_H(g_0)^{-1} \aaa_H(g), \;\rho(g^{-1}\cdot x)= \aaa_\N(g)\Big\}.\]

In practice, we will often use it when $g_0=e_G$, in which case $T(e_G,x)=e_H$, and so the above set is
 \[\Big\{x\in \overline{\Omega}_0\mid  \forall g\in \Sigma\colon T(g,x)=\aaa_H(e_G)^{-1} \aaa_H(g), \;\rho(g^{-1}\cdot x)= \aaa_\N(g)\Big\}.\]
Let us briefly pause here to interpret this formula. Let us ignore the expression involving $\rho$ (which is a technical adjustment). Here what we see is $ T(g,x)=\aaa_H(e_G)^{-1} \aaa_H(g)$, which means that the map $T(\cdot, x):G\to H$ has to coincide on $\Sigma$ with $\aaa$, conveniently translated so that it maps $e_G$ to $e_H$.
 Now using that $T(g,x)=[T_n(g,x_n)]_\mathcal{U}$, we have
\[\mu(W)=\lim_\mathcal{U}\PP_{\mathcal{G}_n}\Big(\Big\{x\in \mathcal{G}_n\mid \forall g\in \Sigma\colon T_n(g,x)=\aaa_H(e_G)^{-1} \aaa_H(g), \;\rho_n(xg)= \aaa_\N(g)\Big\}\Big).\]
Finally using the definition of $T_n$, this becomes
\[\mu(W)=\lim_\mathcal{U}\PP_{\mathcal{G}_n}\Big(\Big\{x\in \mathcal{G}_n\mid \forall g\in \Sigma\colon \uu_n(xg)=\uu_n(x)\aaa_H(e_G)^{-1} \aaa_H(g), \;\rho_n(xg)= \aaa_\N(g)\Big\}\Big).\]

\section{A fundamental domain for the $G$ action on $(\Omega,\mu)$}\label{sec:FundDomG}
The construction of \S \ref{BoxSpaces} only relied on the assumption  that the sequence $\uu_n:\mathcal{G}_n\to \mathcal{H}_n$ is {\bf $\mathcal{U}$-statistically Lipschitz}  and {\bf $\mathcal{U}$-statistically coarsely bijective}, or alternatively that $\uu_n:\mathcal{G}_n\to H$ is {\bf $\mathcal{U}$-statistically Lipschitz}. These assumptions allowed us to construct a coupling space $(\Omega,\mu)$ such that the $H$ action admits a (Borel) fundamental domain $X_H=\Omega_0$ such that  $\mu(X_H)=1$. 
We now turn our attention to the action of $G$. 
 By \cref{prop:existenceofmu},  the measure $\mu$ is supported in the Borel subspace $\Omega''$ consisting of injective maps $\aaa: G\to H\times \N$ such that for all $g\in G$, there exists $g'\in G$ such that $\aaa(g')=(\aaa_H(g),0)$. A quick way to construct a Borel fundamental domain for the action of $G$ on $\Omega''$ goes as follows: take a well-order $<$ on $H\times \N$ and let the fundamental domain be the subset of $\Omega''$ consisting of (injective) maps $G\to H\times \N$ which attain their $<$-minimum at $e_G$. This implies at least that we automatically get a measure subgroup coupling for both constructions.

\begin{proposition}\label{prop:SubgroupCoupling}
Assume  that $\uu_n:\mathcal{G}_n\to H$ are $\mathcal{U}$-statistically Lipschitz  and $\mathcal{U}$-statistically coarsely injective, or $\uu_n:\mathcal{G}_n\to \mathcal{H}_n$ are $\mathcal{U}$-statistically Lipschitz  and $\mathcal{U}$-statistically coarsely bijective. Then $(\Omega'',\mu)$ is a subgroup coupling from $G$ to $H$, where $X_H=\Omega''_0$, and whose associated cocycle $\alpha$ satisfies $\alpha(g,\aaa)=T(g^{-1},\aaa_H)^{-1}=\aaa_H(g^{-1})^{-1}$, for all $\aaa=(\aaa_H,\aaa_\N)\in \Omega''_0$.
\end{proposition}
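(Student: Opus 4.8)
The plan is to check the defining properties of a measure subgroup coupling one at a time on the $G\times H$-invariant set $\Omega''$, on which \cref{prop:existenceofmu} guarantees $\mu$ is concentrated and which consists entirely of injective maps $\aaa\colon G\to H\times\N$. First I would verify that $\Omega''$ really is $G\times H$-invariant: both the injectivity of $\aaa$ and the condition ``for all $g$ there is $g'$ with $\aaa(g')=(\aaa_H(g),0)$'' are preserved by the $H$-action (left translation in the $H$-coordinate) and by the $G$-action (precomposition with $g_0^{-1}$), as a short change-of-variables computation shows. Next, freeness: the $H$-action is free because $(e_G,h)*\aaa=\aaa$ forces $h=e_H$ upon evaluating at $e_G$, and the $G$-action is free because $(g,e_H)*\aaa=\aaa$ means $\aaa(g^{-1}x)=\aaa(x)$ for all $x$, so injectivity of $\aaa$ gives $g=e_G$. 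Since every point of $\Omega''$ is injective, both commuting, measure-preserving actions are genuinely free there.

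For the $H$-fundamental domain I would simply take $X_H=\Omega''_0=\Omega''\cap\Omega_0$: as $\Omega_0$ is already a fundamental domain for $H$ on $\Omega$, its trace on $\Omega''$ is one for $H$ on $\Omega''$, and $\mu(X_H)=\mu(\Omega_0)=1$ since $\mu(\Omega_0\setminus\Omega'')=0$. This also yields $\sigma$-finiteness, because $\Omega''=\bigcup_{h\in H}h*X_H$ is a countable union of sets of measure $1$.

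The only step with real content is the construction of a fundamental domain for $G$, following the sketch at the start of \cref{sec:FundDomG}. I would fix a well-order $<$ on the \emph{countable} set $H\times\N$ and set
\[
X_G=\left\{\aaa\in\Omega''\mid \aaa(e_G)<\aaa(g)\ \text{for all } g\neq e_G\right\}.
\]
For injective $\aaa$ the image $\aaa(G)$ has a unique $<$-minimal element, attained at a unique $g^*\in G$, and $((g^*)^{-1},e_H)*\aaa$ is then the unique point of the $G$-orbit of $\aaa$ lying in $X_G$ (uniqueness using freeness); hence $X_G$ is an exact fundamental domain. Borel measurability comes for free: since $H\times\N$ is countable, each set $\{\aaa\mid\aaa(e_G)<\aaa(g)\}$ is the preimage of a subset of $(H\times\N)^2$ under a coordinate map, and $X_G$ is a countable intersection of such Borel sets.

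Finally I would identify the cocycle. For $\aaa\in X_H$, the element $h=\alpha(g,\aaa)$ is characterised by $(g,h)*\aaa\in X_H$; evaluating $(g,h)*\aaa$ at $e_G$ gives the $H$-coordinate $h\,\aaa_H(g^{-1})$, which must equal $e_H$, so $\alpha(g,\aaa)=\aaa_H(g^{-1})^{-1}$, matching $\overline{\alpha}(g,\cdot)=T(g^{-1},\cdot)^{-1}$ from \cref{prop:nonstandardCoupling} through the identity $\aaa_H(g^{-1})=T(g^{-1},x)$ coming from the definition of $J_0$. I do not expect a serious obstacle anywhere; the one point needing care is checking that $X_G$ meets each $G$-orbit exactly once, which hinges on injectivity (hence on $\mu$ being supported on injective maps) together with the well-ordering of the countable set $H\times\N$, while everything else is bookkeeping with the two commuting actions.
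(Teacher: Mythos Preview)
Your proposal is correct and follows essentially the same approach as the paper: the paper's argument (given in the paragraph immediately preceding the proposition) also takes a well-order on $H\times\N$ and declares the $G$-fundamental domain to be those injective maps attaining their $<$-minimum at $e_G$. You simply flesh out the details the paper leaves implicit (invariance of $\Omega''$, freeness, measurability of $X_G$, and the cocycle computation), all of which are straightforward and handled correctly.
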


In order to obtain a measure equivalence coupling, these assumptions won't be enough. We shall need to assume in addition that $\uu_n:\mathcal{G}_n\to \mathcal{H}_n$ is $\mathcal{U}$-statistically expansive (hence $\mathcal{U}$-statistically a coarse equivalence). The main goal of this subsection and the following one will be to prove the following proposition.
\begin{proposition}\label{prop:OECoupling}
Under the assumption  that $\uu_n:\mathcal{G}_n\to \mathcal{H}_n$ are $\mathcal{U}$-statistically a coarse equivalence, then  $(\Omega'',\mu)$ defines an ME-coupling from $G$ to $H$. Besides, if $(\uu_n)$ is $\mathcal{U}$-statistically coarsely bijective, $(\Omega'',\mu)$ defines an OE coupling from $G$ to $H$, where $X_H=\Omega''_0=X_G$.
\end{proposition}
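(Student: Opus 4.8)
The plan is to upgrade the measure subgroup coupling of \cref{prop:SubgroupCoupling} to a measure equivalence coupling by exhibiting a Borel fundamental domain $X_G$ for the $G$-action on $(\Omega'',\mu)$ of \emph{finite} measure; everything else (freeness of the $G\times H$-action, invariance of $\mu$, and $\mu(X_H)=1$ for $X_H=\Omega''_0$) is already in place by \cref{prop:existenceofmu,prop:SubgroupCoupling}. To build $X_G$ I would fix a well-order $<$ on $H\times\N$ and set
\[
X_G=\{\aaa\in\Omega''\mid \aaa\ \text{attains its}\ <\text{-minimum over}\ \Im\aaa\ \text{at}\ e_G\}.
\]
By \cref{prop:existenceofmu} the measure $\mu$ is carried by injective maps $\aaa\colon G\to H\times\N$, so for $\mu$-a.e.\ $\aaa$ the nonempty set $\Im\aaa\subseteq H\times\N$ has a unique $<$-minimum, attained at a unique $g_0\in G$.

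First I would check that $X_G$ is a fundamental domain. For covering: given such an $\aaa$ with minimum attained at $g_0$, the point $g_0^{-1}\cdot\aaa$ satisfies $(g_0^{-1}\cdot\aaa)(e_G)=\aaa(g_0)=\min_<\Im\aaa$ while $\Im(g_0^{-1}\cdot\aaa)=\Im\aaa$, so $g_0^{-1}\cdot\aaa\in X_G$. For disjointness: if $\aaa$ and $g\cdot\aaa$ both lie in $X_G$ with $g\neq e_G$, then $\aaa(e_G)$ and $(g\cdot\aaa)(e_G)=\aaa(g^{-1})$ would both equal $\min_<\Im\aaa$, contradicting injectivity of $\aaa$. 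Hence each $G$-orbit meets $X_G$ in exactly one point, and since the $G$-action is free and measure preserving, $(\Omega'',X_G,X_H,\mu)$ is a measure coupling; the inverse cocycle $\beta$ is read off from $\aaa$ exactly as $\alpha$ was in \cref{prop:SubgroupCoupling}.

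The heart of the matter, and the step I expect to be the main obstacle, is to prove $\mu(X_G)<\infty$; this is precisely where $\mathcal U$-statistical expansivity and coarse surjectivity must enter, since statistical Lipschitzness alone already yields the subgroup coupling but says nothing about finiteness. The natural route is to decompose $\mu(X_G)=\sum_{h\in H}\mu(X_G\cap hX_H)$ along the $H$-fundamental domain and, using $H$-invariance of $\mu$, to translate the $h$-th term by $h^{-1}$: writing $\aaa'=h^{-1}\cdot\aaa$, membership in $X_G\cap hX_H$ becomes the condition that $h=\aaa_H(e_G)$ realizes the $<$-minimal displacement, which forces $\Im\aaa'_H$ to avoid a ball of radius comparable to $|h|$ around $h^{-1}$. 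I would then bound the measure of such configurations by the coarse-surjectivity defect $\lim_\mathcal U\PP_{\mathcal{H}_n}(\{z\mid d_{\mathcal{H}_n}(z,\uu_n(\mathcal{G}_n))\ge |h|\})$, which tends to $0$, and combine this with \cref{prop:compaireSize} (so that source and target have comparable size) and with expansivity (so that an $H$-displacement arises from a genuine bounded $G$-translation, making $\Im\aaa_H$ coarsely dense). The delicate point is to organize these estimates so that the distinguished minimum lands, for $\mu$-a.e.\ $\aaa$, in a region of controlled size and the resulting sum over $h$ converges; controlling this sum, rather than merely establishing that each orbit meets $X_G$ once, is the technical crux that distinguishes ME from the mere subgroup coupling.

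Finally, for the orbit equivalence statement I would specialize to bijective $\uu_n$ (as in the last clause). Then each fibre $\uu_n^{-1}(\uu_n(x))$ is a singleton, so for $g\neq e_G$ and $x$ in the free set $X'$ one has $x_ng\neq x_n$ on $\mathcal{G}_n^{(|g|)}$, whence $\uu_n(x_ng)\neq\uu_n(x_n)$ and therefore $T(g,x)\neq e_H$; equivalently $\aaa_H^{-1}(e_H)=\{e_G\}$ for $\mu$-a.e.\ $\aaa$. Since the $G$-orbit of $\aaa$ meets $\Omega''_0$ in exactly the points indexed by $\aaa_H^{-1}(e_H)$, this shows that $\Omega''_0$ is simultaneously a fundamental domain for both actions. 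Thus $X_G=\Omega''_0=X_H$, the finiteness obstacle disappears since $\mu(\Omega''_0)=1$, and the coupling is an OE coupling with $\alpha$ and $\beta$ as its orbit equivalence cocycles, by the folklore proposition of \S\ref{sec:PrelimME}.
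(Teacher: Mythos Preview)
Your overall architecture matches the paper's: construct an explicit Borel fundamental domain $X_G$ for the $G$-action, prove it has finite measure using expansivity and coarse surjectivity, and then, for bijective $\uu_n$, identify $X_G$ with $X_H$. The well-order definition of $X_G$ you give is essentially the paper's: the paper fixes an enumeration $H=\{h_0,h_1,\dots\}$ and sets $Y_i=\{\aaa\in\Omega''\mid \aaa(e_G)=(h_i,0)\text{ and }\aaa_H\text{ does not hit }h_j\text{ for }j<i\}$, $X_G=\bigsqcup_i Y_i$, which is precisely your $<$-minimum construction for the well-order $(h_i,0)<(h_j,0)\Leftrightarrow i<j$ (the $\N$-coordinate of the minimum is automatically $0$ by the defining property of $\Omega''$).

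The genuine gap is the finiteness step. You correctly flag it as the crux, and your outline (decompose along $\sum_h \mu(X_G\cap hX_H)$ and bound each term by a coarse-surjectivity defect) is the right shape, but ``each term tends to $0$'' does not make $\sum_{h\in H}$ converge. The paper carries this out concretely: expressing each $\mu(Y_i)$ as an ultralimit of $\PP_{\mathcal{G}_n}$-probabilities, it obtains the exact value
\[
\mu(X_G)=\lim_{\mathcal U}\frac{\#\mathcal{H}_n}{\#\mathcal{G}_n},
\]
with the lower bound using only coarse surjectivity and the matching upper bound requiring expansivity (to kill the event that $\uu_n(x)h_i^{-1}h_j$ lies in $\Im\uu_n$ but only via some far-away preimage). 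Without this computation your sketch does not visibly sum.

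There is also a gap in the OE part. Via injectivity of $\aaa_H$ you show that each $G$-orbit meets $\Omega''_0$ in \emph{at most} one point; you do not prove covering. For $\aaa\in\Omega''\setminus\Omega''_0$ you would need some $g$ with $\aaa_H(g^{-1})=e_H$, i.e.\ $e_H\in\Im\aaa_H$, and your argument (which starts from $\aaa_H(e_G)=e_H$) does not address this. The paper proceeds differently: it shows $\mu(X_H\setminus X_G)=0$ from injectivity of $\uu_n$ and $\mu(X_G\setminus X_H)=0$ from surjectivity (using expansivity to force $\mu(Y_i)=0$ once $|h_i|$ exceeds the density constant), whence $X_G=X_H$ a.e. You could alternatively repair your direct route by using surjectivity of $\uu_n$ together with expansivity to show that for every fixed $h\in H$ one has $h\in\Im T(\cdot,x)$ for $\overline{\mu}_0$-a.e.\ $x$, but that argument is not in your write-up.
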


 In order to estimate the volume of $\Omega''/G$, it will be convenient to deal with a different fundamental domain, which can be defined as follows.

We define \[Y_0= \{\aaa\in \Omega''\mid \aaa(e_G)=(e_H,0)\}.\]
Enumerate the elements of $H=\{h_0=e_H,h_1,h_2,\ldots\}$ and define recursively for $i\geq 1$
\[Y_{i}=h_{i}\ast Y_0\setminus G\ast\bigcup_{j=0}^{i-1}Y_j.\]
Finally define $X_G=\bigsqcup_{i}Y_i$. 

In words, $Y_i$ is the set of functions $\aaa\in \Omega''$ such that $\aaa_H$ does not take values $h_j$, for any $j<i$, and $\aaa$ takes value $(h_i,0)$ exactly at $e_G$. Clearly, the $Y_i$ are disjoint. Moreover $G\ast Y_i$ (union of $G$-translates of $Y_i$) consists of functions $\aaa$ such that $\aaa_H$ does not take values $h_i$ for $i<j$, but takes value $h_j$: indeed, recall that functions of $\Omega''$ have the property that if $\aaa_H$ takes the value $h$, then $\aaa$ takes the value $(h,0)$. We deduce that $G\ast X_G=\bigcup_i G\ast Y_i$ is all of $\Omega''$.
Clearly,  $G\ast Y_i$ and $G\ast Y_j$ are disjoint if $i\neq j$. Also, since functions of  $\Omega''$ are injective, $G$-translates of $Y_i$ are disjoint. Hence $G$-translates are disjoint. We have therefore proved the following proposition.
 
\begin{proposition}\label{prop:X_H}
	$X_G$ is a Borel fundamental domain for the action of $G$ on $\Omega''$.
\end{proposition}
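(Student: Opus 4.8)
The plan is to verify the three defining properties of a Borel fundamental domain for the $G$-action on $\Omega''$: that $X_G$ is Borel, that the $G$-translates of $X_G$ cover $\Omega''$, and that distinct $G$-translates are disjoint. The key bookkeeping device is the index
\[m(\aaa)=\min\{i\in\N\mid h_i\in \aaa_H(G)\},\]
which is well defined for every $\aaa\in\Omega''$ since $\aaa_H(G)\neq\emptyset$, and which is constant on $G$-orbits because the $G$-action $(g\ast\aaa)(x)=\aaa(g^{-1}x)$ leaves the image $\aaa_H(G)$ unchanged. Throughout I write $\{m=i\}$ for $\{\aaa\in\Omega''\mid m(\aaa)=i\}$, and similarly $\{m<i\}$.

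First I would record the elementary computation $h_i\ast Y_0=\{\aaa\in\Omega''\mid \aaa(e_G)=(h_i,0)\}$, using that the $H$-action sends $\aaa(e_G)=(e_H,0)$ to $(h_i,0)$ and that $\Omega''$ is $H$-invariant. The heart of the argument is then the inductive identity
\[G\ast Y_i=\{m=i\}.\]
The crucial input here is the defining property of $\Omega''$: every value $h\in\aaa_H(G)$ is attained by $\aaa$ at some point whose $\N$-coordinate is $0$, so $\aaa$ takes the value $(h_i,0)$ somewhere if and only if $h_i\in\aaa_H(G)$. Translating such a point to $e_G$ gives $G\ast(h_i\ast Y_0)=\{\aaa\in\Omega''\mid h_i\in\aaa_H(G)\}$. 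Since $B:=G\ast\bigcup_{j<i}Y_j$ is $G$-invariant, one has the set identity $G\ast(A\setminus B)=G\ast A\setminus B$, so feeding in the inductive hypothesis $B=\{m<i\}$ yields $G\ast Y_i=\{\aaa\mid h_i\in\aaa_H(G)\}\setminus\{m<i\}=\{m=i\}$. I expect this to be the main obstacle: one must exploit carefully that $\Omega''$ forces the minimal-index value to be realized with $\N$-coordinate $0$, so it can be moved to $e_G$, and one must justify passing $G\ast$ through the set difference, which is legitimate precisely because the subtracted set is $G$-invariant.

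With this characterization in hand, the three properties follow quickly. Borelness is routine: $Y_0$ is clopen, each map $g\ast(\cdot)$ and $h_i\ast(\cdot)$ is a homeomorphism, and $G$ is countable, so every $G\ast(\cdot)$ is a countable union of Borel sets; hence each $Y_i$, and therefore $X_G=\bigsqcup_i Y_i$, is Borel. Covering holds because $m(\aaa)$ is always defined, whence $G\ast X_G=\bigcup_i G\ast Y_i=\bigcup_i\{m=i\}=\Omega''$. For disjointness, observe that the sets $\{m=i\}=G\ast Y_i$ are pairwise disjoint and $G$-invariant, which reduces the claim to showing $g\ast Y_i\cap Y_i=\emptyset$ for $g\neq e_G$. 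Rewriting $Y_i=\{\aaa\in\Omega''\mid \aaa(e_G)=(h_i,0),\ m(\aaa)=i\}$, if $\aaa$ and $g\ast\aaa$ both lie in $Y_i$ then $(h_i,0)=(g\ast\aaa)(e_G)=\aaa(g^{-1})=\aaa(e_G)$, and injectivity of $\aaa$ forces $g^{-1}=e_G$. This simultaneously reproves that the $G$-action on $\Omega''$ is free and completes the verification that $X_G$ is a Borel fundamental domain.
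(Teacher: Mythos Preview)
Your proof is correct and follows essentially the same approach as the paper's: both identify $G\ast Y_i$ with the set of $\aaa\in\Omega''$ whose image $\aaa_H(G)$ has minimal enumeration index $i$, using the defining property of $\Omega''$ that each value of $\aaa_H$ is attained with $\N$-coordinate $0$, and then deduce covering and disjointness from this characterization together with injectivity of $\aaa$. The only difference is presentational---you introduce the index function $m(\aaa)$ explicitly and organize the argument as an induction, whereas the paper states the same characterization in words.
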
 

\begin{remark}\label{rem:Yi}
It follows from the previous discussion that 
\begin{eqnarray*}
Y_i& = & \{\aaa\in \Omega'' \mid  \aaa(e_G)=(h_i,0),  \;{\text and }\;\forall j<i, \;\forall g\in G,\; \aaa_H(g)\neq h_j \}\\
     & = &  \bigcap_{r\geq 0}\{\aaa\in \Omega'' \mid  \aaa(e_G)=(h_i,0),  \;{\text and }\;\forall j<i,\; \forall g\in B_G(e_G,r)  \; \aaa_H(g)\neq h_j \}.
\end{eqnarray*}
Reasoning as in \S \ref{sec:BasicSubsets}, we get
\[\mu(Y_i) = \lim_{r\to\infty}\lim_{\mathcal{U}} \PP_{\mathcal{G}_n}\Big(\Big\{x\in\mathcal{G}_n\mid \rho_n(x)=0, 
 \forall j<i\colon \uu_n(x)h_i^{-1}h_j\notin \uu_n(B_{\mathcal{G}_n}(x,r)) \Big\}\Big).\]
\end{remark}

We are now ready to estimate the covolume of the $G$-action. 
\begin{proposition}\label{prop:fundDom}
	Assuming that  $\uu_n:\mathcal{G}_n\to \mathcal{H}_n$ are $\mathcal{U}$-statistically Lipschitz  and $\mathcal{U}$-statistically coarsely bijective, we have $\mu(X_G)\ge\lim\limits_{\mathcal{U}}\frac{\#\mathcal{H}_n}{\#\mathcal{G}_n}$. If we assume, in addition, that $\uu_n$ are $\mathcal{U}$-statistically expansive (i.e.\  is $\mathcal{U}$-statistically a coarse equivalence), then the latter inequality becomes an equality.
\end{proposition}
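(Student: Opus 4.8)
The plan is to compute $\mu(X_G)=\sum_{i\ge 0}\mu(Y_i)$ from the partition $X_G=\bigsqcup_i Y_i$, feeding in the formula for $\mu(Y_i)$ recorded in \cref{rem:Yi}. Write $\eta=\lim_{\mathcal U}\#\mathcal H_n/\#\mathcal G_n$, which is finite by \cref{prop:compaireSize}. Since any two Borel fundamental domains for the $G$-action have the same measure, I am free to choose the enumeration $H=\{h_0=e_H,h_1,\dots\}$ used to define the $Y_i$; I take it compatible with word length, so that the first $N_C$ elements are exactly $B_H(e_H,C)$. Set $S_n=\{\rho_n=0\}$, which $\uu_n$ identifies bijectively with $\Im\uu_n$; for $y\in\Im\uu_n$ let $x_y\in S_n$ be its unique preimage with $\rho_n=0$ and put $U(y)=\uu_n(B_{\mathcal G_n}(x_y,r))\subseteq\Im\uu_n$.

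The heart of the argument is a \emph{first-hit} reinterpretation of the sets $D_{i,r,n}=\{x\in S_n\mid \forall j<i,\ \uu_n(x)h_i^{-1}h_j\notin U(\uu_n(x))\}$ underlying \cref{rem:Yi}. Scanning the $H$-orbit of $z:=\uu_n(x)h_i^{-1}$ through $zh_0,zh_1,\dots$, the condition $x\in D_{i,r,n}$ says precisely that $i$ is the first index at which this scan meets $U(\uu_n(x))$ (the value met being $y=\uu_n(x)$). Thus, for a fixed truncation $N$, the pairs $(x,i)$ with $i\le N$ and $x\in D_{i,r,n}$ are in bijection with the set $P_{r,n,N}$ of pairs $(y,z)$, where $y\in\Im\uu_n$, $z$ lies in the $H$-orbit of $y$ with index $\le N$, and $y$ is the first point of $U(y)$ encountered in the scan from $z$. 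Interchanging the finite sum with the (always existing) limits gives
\[\sum_{i=0}^{N}\mu(Y_i)=\lim_{r\to\infty}\lim_{\mathcal U}\frac{1}{\#\mathcal G_n}\,\#P_{r,n,N}.\]

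For the lower bound (needing only $\mathcal U$-statistical coarse bijectivity) I send each $z\in\mathcal H_n$ with $m_0(z):=\min\{j\mid zh_j\in\Im\uu_n\}\le N$ to the pair $(y_0,z)$, where $y_0=zh_{m_0(z)}$ is the first image point met by the scan. Since $U(y_0)\subseteq\Im\uu_n$ and no image point is met before $y_0$, one has $(y_0,z)\in P_{r,n,N}$ for \emph{every} $r$, and $z\mapsto(y_0,z)$ is injective; hence $\#P_{r,n,N}\ge\#\{z\mid m_0(z)\le N\}$. Coarse surjectivity gives $d_{\mathcal H_n}(z,\Im\uu_n)\le C$ on a set of probability tending to $1$, and length-compatibility of the enumeration turns this into $m_0(z)\le N_C$; letting $r\to\infty$ and then $N\to\infty$ yields $\mu(X_G)\ge\lim_N\lim_{\mathcal U}\#\{m_0\le N\}/\#\mathcal G_n=\eta$.

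For equality I upgrade this to $\#P_{r,n,N}=\#\{z\mid m_0(z)\le N\}$ once $r$ is large, using $\mathcal U$-statistical expansivity precisely to guarantee the inclusion $U(y)\supseteq B_{\mathcal H_n}(y,s)\cap\Im\uu_n$ with $s=s(r)\to\infty$ (an image point near $y$ has a preimage near $x_y$, so lies in $\uu_n(B_{\mathcal G_n}(x_y,r))$). If $z$ has two image hits of index $\le N$, their mutual distance is at most $2\ell_N$, where $\ell_N=\max_{m\le N}|h_m|$; hence as soon as $s>2\ell_N$ the earlier hit lies in $U(y)$ and destroys every pair $(y,z)$ with $y\neq y_0$. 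Only the first-hit pairs survive, so the ``extra'' pairs responsible for a possible strict inequality disappear in the limit and $\sum_{i\le N}\mu(Y_i)=\lim_{\mathcal U}\#\{m_0\le N\}/\#\mathcal G_n$ exactly; letting $N\to\infty$ gives $\mu(X_G)=\eta$. The main obstacle is exactly this interchange of the infinite sum $\sum_i$ with the double limit $\lim_r\lim_{\mathcal U}$: at a fixed small $r$ the first-hit fibers may be arbitrarily large, forcing the truncation at $N$, and it is expansivity — through $U(y)\supseteq B_{\mathcal H_n}(y,s)\cap\Im\uu_n$ — that collapses the count to the clean first-hit count and supplies the reverse inequality.
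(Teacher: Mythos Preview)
Your approach is essentially the same as the paper's, repackaged in ``first-hit'' combinatorial language: your bijection between pairs $(x,i)$ with $x\in D_{i,r,n}$ and the set $P_{r,n,N}$ is exactly the partition identity $\sum_{i\le N}\#(\Im\uu_n\,h_i^{-1}\setminus\bigcup_{j<i}\Im\uu_n\,h_j^{-1})=\#\bigcup_{i\le N}\Im\uu_n\,h_i^{-1}$ that the paper uses for the lower bound, and your elimination of non-first-hit pairs via the inclusion $U(y)\supseteq B_{\mathcal H_n}(y,s)\cap\Im\uu_n$ is precisely the paper's split of the upper bound into the telescoping term plus an expansivity error. One small point to make explicit: the inclusion $U(y)\supseteq B_{\mathcal H_n}(y,2\ell_N)\cap\Im\uu_n$ only holds off an exceptional set, so the bound you need is that the number of extra pairs is at most $(N+1)$ times the size of that exceptional set, which vanishes in the limit $r\to\infty$; you say this implicitly (``disappear in the limit'') but it is worth writing out, since your line ``$\#P_{r,n,N}=\#\{z\mid m_0(z)\le N\}$ once $r$ is large'' is literally false at any finite $r$.
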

\begin{proof}
	Recall that $H=\{e_H=h_0,h_1,\ldots\}$, and assume without loss of generality that $|h_i|\le|h_{i+1}|$ for every $i$. Thus, by our computation of $\mu(Y_i)$ and the fact that they are pairwise disjoint, we have
	\[\mu(X_G) 
	 = \sum_{i=0}^\infty \mu(Y_i)= \lim_{R\to \infty}  \sum_{h_i\in B(e_H,R)} \mu(Y_i).\]
Hence $\mu(X_G)$  is equal to
	
	\begin{align*}
	& \lim_{R\to \infty} \lim_{r\to\infty}\lim_{\mathcal{U}}\sum_{h_i\in B_H(e_H,R)} \PP_{\mathcal{G}_n}\Big(\Big\{x\in\mathcal{G}_n\mid \rho_n(x)=0, 
 \forall j<i\colon \uu_n(x)h_i^{-1}h_j\notin \uu_n(B_{\mathcal{G}_n}(x,r)) \Big\}\Big)\\
	& \ge \lim_{R\to \infty}\lim_{\mathcal{U}}\sum_{h_i\in B_H(e_H,R)} \PP_{\mathcal{G}_n}\Big(\Big\{x\in\mathcal{G}_n\mid \rho_n(x)=0, \forall j<i\colon \uu_n(x)h_i^{-1}\notin \Im(\uu_n)h_j^{-1} \Big\}\Big).
	\end{align*}

	Recall that for every $y\in \Im(\uu_n)$ there exists a unique $x\in \mathcal{G}_n$ such that $\rho_n(x)=0$ and $\uu_n(x)=y$. So,
	\begin{align*}
	\mu(X_G) & \ge \lim_{R\to \infty} \lim_{\mathcal{U}}\sum_{h_i\in B_H(e_H,R)} \frac{\#(\Im(\uu_n)h_i^{-1}\setminus \bigcup_{j<i}\Im(\uu_n)h_j^{-1})}{\#\mathcal{G}_n}
	\\
	& = \lim_{R\to \infty} \lim_{\mathcal{U}} \frac{1}{\#\mathcal{G}_n}\#\big\{y\in\mathcal{H}_n\mid d_{\mathcal{H}_n}(y,\Im(\uu_n))\le R \big\}\\
	& =  \lim_{\mathcal{U}} \frac{\#\mathcal{H}_n}{\#\mathcal{G}_n},
	\end{align*}
	as the maps $\uu_n$ are $\mathcal{U}$-statistically coarsely surjective.

	We now turn to the upper bound:
	
	\begin{align*}
	\mu(X_G) 	
	& = \lim_{R\to \infty} \lim_{r\to\infty}\lim_{\mathcal{U}}\sum_{h_i\in B_H(e_H,R)} \PP_{\mathcal{G}_n}\Big(\Big\{x\in\mathcal{G}_n\mid \rho_n(x)=0, \\
	& \AlignRight{ \forall j<i\colon \uu_n(x)h_i^{-1}h_j\notin \uu_n(B_{\mathcal{G}_n}(x,r)) \Big\}\Big)}.\\
	& \le \lim_{R\to \infty} \lim_{r\to\infty}\lim_{\mathcal{U}}\sum_{h_i\in B_H(e_H,R)} \PP_{\mathcal{G}_n}\Big(\Big\{x\in\mathcal{G}_n\mid \rho_n(x)=0, \forall j<i\colon \uu_n(x)h_i^{-1}\notin \Im(\uu_n)h_j^{-1}  \Big\}\Big)\\
	& + \lim_{R\to \infty} \lim_{r\to\infty}\lim_{\mathcal{U}}\sum_{h_i\in B_H(e_H,R)}  \PP_{\mathcal{G}_n}\Big(\Big\{x\in\mathcal{G}_n \mid \exists x'\colon \; d_{\mathcal{G}_n}(x,x')>r,\\
	& \AlignRight{\exists j<i\colon \uu_n(x)h_i^{-1}= \uu_n(x')h_j^{-1} \Big\}\Big) }\\
	& = \lim_{\mathcal{U}} \frac{\#\mathcal{H}_n}{\#\mathcal{G}_n} + \lim_{R\to \infty} \sum_{h_i\in B_H(e_H,R)}\lim_{r\to\infty}\lim_{\mathcal{U}} \PP_{\mathcal{G}_n}\Big(\Big\{x\in\mathcal{G}_n \mid \exists x'\colon \; d_{\mathcal{G}_n}(x,x')>r,\qquad\\
	& \AlignRight{d_{\mathcal{H}_n}(\uu_n(x),\uu_n(x')) \le 2R \Big\}\Big) }\\
	& = \lim_{\mathcal{U}} \frac{\#\mathcal{H}_n}{\#\mathcal{G}_n}.
	\end{align*}
Let us explain:  for the first equality, we considered two cases, depending on whether $\uu_n(x)h_i^{-1}h_j$ is or not in $\Im(\uu_n)$. When it is  (the second term), then it equals $\uu_n(x')$ for some $x'$ such that $d_{\mathcal{G}_n}(x,x')>r$, which explains the inequality. The second equality follows from the computation we had performed just above. Finally we conclude thanks to the fact that the maps $\uu_n$ are $\mathcal{U}$-statistically expansive.

Thus, $\mu(X_G)=\lim\limits_{\mathcal{U}} \frac{\#\mathcal{H}_n}{\#\mathcal{G}_n}$ and we are done.
\end{proof}
Collecting the above results, we deduce the following theorem.
\begin{proposition}\label{prop:ME}
	Let $G$ and $H$ be two finitely generated groups with sofic approximations $(\mathcal{G}_n)_n$ and $(\mathcal{H}_n)_n$. Assume that  the  sequence of maps $\uu_n\colon \mathcal{G}_n\to \mathcal{H}_n$ is $\mathcal{U}$-statistically a coarse equivalence. Then, there exists a measure equivalence coupling $(\Omega,\mu)$ between be the groups $G$ and $H$ such that 
	\[\frac{\mu(\Omega/G)}{\mu(\Omega/H)}=\lim_{\mathcal{U}}\frac{\#\mathcal{H}_n}{\#\mathcal{G}_n}.\]	.
\end{proposition}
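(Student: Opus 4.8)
The plan is to assemble the pieces established in \S\ref{BoxSpaces} and \S\ref{sec:FundDomG}; once the coupling has been built, the statement is really just a bookkeeping of covolumes. First I would invoke \cref{prop:OECoupling}: since the hypothesis is exactly that $(\uu_n)_n$ is $\mathcal{U}$-statistically a coarse equivalence, that proposition guarantees that $(\Omega'',\mu)$, equipped with the commuting, measure-preserving $G$- and $H$-actions constructed in \S\ref{section:non-standard} and \S\ref{sec:measure}, is a genuine measure equivalence coupling from $G$ to $H$. Because $\mu$ is concentrated on $\Omega''$ by \cref{prop:existenceofmu}, we may regard this as a coupling on $(\Omega,\mu)$. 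It then remains only to compute the two covolumes, that is, the $\mu$-measures of fundamental domains for the $G$- and the $H$-action (these are well defined, the actions being free and measure preserving on the support of $\mu$).

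For the $H$-side, I would recall that $X_H=\Omega''_0$ is a Borel fundamental domain for the $H$-action with $\mu(\Omega''_0)=1$; this is built into the construction (see \cref{prop:existenceofmu} and \cref{prop:SubgroupCoupling}), so $\mu(\Omega/H)=1$. For the $G$-side, \cref{prop:X_H} furnishes the explicit Borel fundamental domain $X_G=\bigsqcup_i Y_i$, and \cref{prop:fundDom} evaluates its measure: under the full coarse-equivalence hypothesis one obtains the equality $\mu(X_G)=\lim_{\mathcal{U}}\#\mathcal{H}_n/\#\mathcal{G}_n$, whence $\mu(\Omega/G)=\lim_{\mathcal{U}}\#\mathcal{H}_n/\#\mathcal{G}_n$. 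Dividing the two covolumes then yields
\[\frac{\mu(\Omega/G)}{\mu(\Omega/H)}=\lim_{\mathcal{U}}\frac{\#\mathcal{H}_n}{\#\mathcal{G}_n},\]
as claimed.

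Since all of the substantive work has been front-loaded into the earlier propositions, this final step is a short assembly and presents no genuine obstacle of its own. The one place where the full strength of the coarse-equivalence hypothesis is actually consumed is \cref{prop:fundDom}: $\mathcal{U}$-statistical coarse surjectivity yields the lower bound $\mu(X_G)\ge\lim_{\mathcal{U}}\#\mathcal{H}_n/\#\mathcal{G}_n$, while $\mathcal{U}$-statistical expansivity is precisely what controls the error term coming from pairs of points $x,x'$ that are far apart in $\mathcal{G}_n$ yet whose images $\uu_n(x),\uu_n(x')$ are close in $\mathcal{H}_n$, thereby upgrading the inequality to an equality. If one wished to prove \cref{prop:ME} in a fully self-contained manner, reproving that upper bound would be the main technical step, and it is exactly there that both halves of the coarse-equivalence assumption must be used.
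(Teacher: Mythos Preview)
Your proof is correct and takes essentially the same approach as the paper, which simply records that the result is an immediate consequence of Propositions~\ref{prop:existenceofmu}, \ref{prop:X_H}, and \ref{prop:fundDom}. Your additional invocation of \cref{prop:OECoupling} is slightly redundant (since that proposition is itself proved via exactly these ingredients), but it does no harm, and your explanation of where each hypothesis is consumed is accurate.
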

\begin{proof}
	This is an immediate consequence of Propositions \ref{prop:existenceofmu}, \ref{prop:X_H} and \ref{prop:fundDom}.
\end{proof}

\subsection{A criterion implying $X_G=X_H$ a.e.}

Proposition \ref{prop:OECoupling} results from the following more general fact.

\begin{proposition}\label{prop:fundamentalDomainBis}
	Assume that $\uu_n:\mathcal{G}_n\to \mathcal{H}_n$ are $\mathcal{U}$-statistically Lipschitz  and $\mathcal{U}$-statistically coarsely bijective.
	\begin{itemize}
		\item If in addition the maps $\uu_n$ have fibers with diameter at most $R$, then $\mu(X_H\setminus B_G(e_G,R)*X_G)=0$. In particular, if the maps are  injective, then $\mu(X_H\setminus X_G)=0$.
		\item 
		If the maps $\uu_n$ are $\mathcal{U}$-statistically a coarse equivalence and have $C$-dense image, then $\mu(X_G\setminus B_H(e_H,C)*X_H)=0$. In particular, if the maps are surjective, then $\mu(X_G\setminus X_H)=0$. 
		\item if both conditions are satisfied, then the resulting coupling is mutually cobounded, and if $\uu$ is  bijective, then $X_H=X_G$ a.e.
	\end{itemize}
\end{proposition}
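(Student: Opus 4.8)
The plan is to translate membership in the sets $B_G(e_G,R)*X_G$ and $B_H(e_H,C)*X_H$ into pointwise conditions on functions $\aaa\in\Omega''$, and then to read these conditions at the level of the sofic approximations through the factor map $J_0$ and the Loeb measure, exactly as in \S\ref{sec:BasicSubsets} and \cref{rem:Yi}. First I would record the elementary descriptions of the relevant translates. Using the formula for the $G\times H$-action and \cref{rem:Cone}, one checks that for $\beta\in\Omega''$, $g\in G$ and $h\in H$ one has $\beta\in g*X_G$ iff $\beta_\N(g)=0$ and $\beta_H(g)$ is the $<$-minimal element of $\Im\beta_H$ (for the fixed enumeration $h_0<h_1<\cdots$ of $H$), while $\beta\in h*X_H$ iff $\beta_H(e_G)=h$. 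Since every $\beta\in X_H$ has $\beta_H(e_G)=e_H=h_0$, the $<$-minimum of $\Im\beta_H$ equals $e_H$. Consequently
\[
X_H\setminus B_G(e_G,R)*X_G=\big\{\beta\in X_H\mid \forall g\in B_G(e_G,R),\ \neg(\beta_\N(g)=0 \text{ and } \beta_H(g)=e_H)\big\},
\]
whereas, reading off $\beta_H(e_G)=h_i$ on each $Y_i$, we get $X_G\setminus B_H(e_H,C)*X_H=\bigsqcup_{|h_i|>C}Y_i$.

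For the first bullet I would parametrise $X_H$ by $\overline{\Omega}_0$ through $J_0$ and pass to $\mathcal{G}_n$. For $\beta=J_0([x_n]_\mathcal{U})$ one has $\beta_H(g)=e_H$ iff $\uu_n(x_ng)=\uu_n(x_n)$, and $\beta_\N(g)=\rho_n(x_ng)$, both for $\mathcal U$-almost all $n$. Let $z$ be the index-$0$ representative of the fibre $\uu_n^{-1}(\uu_n(x_n))$, so $\rho_n(z)=0$ and $\uu_n(z)=\uu_n(x_n)$. The diameter bound gives $d_{\mathcal{G}_n}(x_n,z)\le R$, hence whenever $x_n\in\mathcal{G}_n^{(R)}$ — a full-measure condition by soficity — we have $z=x_ng$ for some $g\in B_G(e_G,R)$, and this $g$ witnesses $\beta\in B_G(e_G,R)*X_G$. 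Thus the displayed complement is null; the injective case is $R=0$, giving $\mu(X_H\setminus X_G)=0$.

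For the second bullet I would instead sum the measures $\mu(Y_i)$ from \cref{rem:Yi}, so that $\mu(X_G\setminus B_H(e_H,C)*X_H)=\sum_{|h_i|>C}\mu(Y_i)$, and show each term vanishes. Fix $i$ with $|h_i|>C$ and consider $x$ with $\rho_n(x)=0$. By $C$-density of the image, $\uu_n(x)h_i^{-1}$ lies within distance $C$ of $\Im\uu_n$, so (for $\uu_n(x)$ in the appropriate $\mathcal{H}_n^{(\cdot)}$, a negligible loss by \cref{prop:coarseBij}) there is $h'\in H$ with $|h'|\le C$ and $\uu_n(x)h_i^{-1}h'\in\Im\uu_n$; since the enumeration has non-decreasing lengths and $|h'|\le C<|h_i|$, necessarily $h'=h_j$ with $j<i$. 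Writing $\uu_n(x)h_i^{-1}h_j=\uu_n(x')$, the image points $\uu_n(x)$ and $\uu_n(x')$ are at distance $|h_i^{-1}h_j|\le 2|h_i|$, which is bounded for $i$ fixed, so $\mathcal U$-statistical expansivity forces $d_{\mathcal{G}_n}(x,x')$ to be bounded off a set of arbitrarily small measure; hence $\uu_n(x)h_i^{-1}h_j\in\uu_n(B_{\mathcal{G}_n}(x,r))$ for $r$ large, contradicting the defining condition of $Y_i$. Thus $\mu(Y_i)=0$, and the surjective case is $C=0$.

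Finally, the third bullet is the conjunction of the first two: the fibre bound yields coboundedness from $H$ to $G$ (here $X_H$ lies, up to measure zero, in the finite union of $G$-translates $B_G(e_G,R)*X_G$), while $C$-density yields coboundedness from $G$ to $H$, so the coupling is mutually cobounded; bijectivity combines the two ``in particular'' cases to give $X_H=X_G$ up to measure zero. I expect the main obstacle to be the second bullet, specifically the passage from the \emph{global} image $\Im\uu_n$ supplied by $C$-density to the \emph{local} image $\uu_n(B_{\mathcal{G}_n}(x,r))$ appearing in the formula for $\mu(Y_i)$. This is exactly the place where $\mathcal U$-statistical expansivity — and not merely coarse bijectivity — is indispensable, as it guarantees that the preimage $x'$ of a nearby image point remains close to $x$ for all but a negligible set of $x$.
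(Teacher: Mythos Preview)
Your proof is correct and follows essentially the same route as the paper. Both arguments reduce the first bullet to showing that almost every $x_n\in\mathcal G_n$ admits some $g\in B_G(e_G,R)$ with $\uu_n(x_ng)=\uu_n(x_n)$ and $\rho_n(x_ng)=0$ (the paper does this via the inclusion $Y_0\subset X_G$, which on $X_H$ is in fact an equality, as your direct characterization makes explicit), and both treat the second bullet by combining $C$-density to produce a preimage $x'$ with $\uu_n(x')=\uu_n(x)h_i^{-1}h_j$ for some $j<i$, followed by $\mathcal U$-statistical expansivity to force $x'\in B_{\mathcal G_n}(x,r)$ for $r$ large.
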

\begin{proof}
The third item is simply a consequence of the first two. 
	So suppose that the diameters of the fibers of the maps $\uu_n$ are at most $R$. Then, we get
	\begin{align*}
	\mu(X_H \! \setminus\! B_G(e_G,R)*X_G) & \le \mu(X_H \! \setminus\! B_G(e_G,R)*Y_0) \\
	& = \lim\limits_{\mathcal{U}} \PP_{\mathcal{G}_n}\Big(\Big\{x\in \mathcal{G}_n\mid \forall g\!\in\! B_G(e_G,R)\colon \uu_n(xg)\neq \uu_n(x) \text{ or } \rho_n(xg)\neq 0 \Big\}\Big)\\
	& = \lim\limits_{\mathcal{U}} \PP_{\mathcal{G}_n}\Big(\Big\{x\in \mathcal{G}_n\mid \forall x'\in \uu_n^{-1}(\uu_n(x))\colon  \rho_n(x')\neq 0 \Big\}\Big)\\
	& = 0,
	\end{align*}
	as every fiber contains an element $x'$ such that $\rho_n(x')=0$. 
		
	To show the second item, it suffices to show that $\mu(Y_i)=0$ whenever $|h_i|>C$. We denote $\eta= \lim_{\mathcal{U}} \frac{\#\mathcal{H}_n}{\#\mathcal{G}_n}$.
	Recall that by Remark \ref{rem:Yi}, we have 
	\begin{align*}
	\mu(Y_i) & = \lim_{R\to\infty}\lim_{\mathcal{U}} \PP_{\mathcal{G}_n}\Big(\Big\{x\in\mathcal{G}_n\mid \rho_n(x)=0, \forall j<i\colon \uu_n(x)h_i^{-1}\notin \uu_n(B_{\mathcal{G}_n}(x,R))h_j^{-1} \Big\}\Big)
	\\
	& \le \lim_{R\to\infty}\lim_{\mathcal{U}}\PP_{\mathcal{G}_n}\Big(\Big\{x\in\mathcal{G}_n\mid\forall g\in B_G(e_G,R)\colon \uu_n(x)h_i^{-1} \notin\uu_n(xg)B_H(e_H,C)\Big\}\Big)\\
	& = \lim_{R\to\infty}\lim_{\mathcal{U}}\PP_{\mathcal{G}_n}\Big(\Big\{x\in\mathcal{G}_n\mid \forall x' \in B_{\mathcal{G}_n}(x,R)\colon \uu_n(x)h_i^{-1}\notin B_{\mathcal{H}_n}(\uu_n(x'),C) \Big\}\Big)\\
	& \le \lim_{R\to\infty}\lim_{\mathcal{U}}\PP_{\mathcal{G}_n}\Big(\Big\{x\in\mathcal{G}_n\mid  \exists x'' \in B_{\mathcal{G}_n}(x,R)^c\colon \uu_n(x)h_i^{-1}\in B_{\mathcal{H}_n}(\uu_n(x''),C) \Big\}\Big)\\
	& \le \lim_{R\to\infty} \lim_\mathcal{U}\PP_{\mathcal{G}_n}\left(\left\{x\in \mathcal{G}_n\mid \diam_{\mathcal{G}_n}(\uu^{-1}(B_{\mathcal{H}_n}(\uu_n(x),C+|h_i|))\geq R\right\}\right)\\
	& = 0,
	\end{align*}
	where we have used for the first inequality that $B_H(e_H,C)\subset \{y_j\mid j<i\}$. For the second inequality in the fourth line, we have used that $\uu_n$ is $C$-dense. Indeed, this means that with proba tending to $1$, any $y\in \mathcal{H}_n$ lies in $B_{\mathcal{H}_n}(\uu_n(x''),C)$ for some $x''\in \mathcal{G}_n$. 
	Finally, we concluded thanks to the assumption that $\uu_n$ is $\mathcal{U}$-statistically expansive.
	\end{proof}

\section{Cocycles and their integrability}\label{sec:integrability}
In this section we calculate the length of the cocycles of the measure equivalence obtained in \cref{prop:fundDom}.
We briefly recall basic facts about our coupling from $G$ to $H$: the action of $G\times H$ on $\Omega$ is given by 
$(g,h)*\aaa=(\aaa_H(g^{-1}\cdot)h,\aaa_\N(g^{-1}\cdot))$, 
the fundamental domain for the $H$-action is given by $X_H = \{\aaa\in \Omega''\mid \aaa_H(e_G)=e_H\}$. Finally, the cocycle $\alpha\colon G\times X_H\to H$, is defined by $g*\aaa\in\alpha(g,\aaa)^{-1}*X_H$ for every $g\in G$ and almost every $\aaa\in X_H$. We deduce a very simple formula for $\alpha$:
\[\alpha(g,\aaa)=\aaa_H(g^{-1})^{-1}.\]

First, we look at the integrability of the cocycle $\alpha\colon G\times X_H\to H$. 
The integrability of this cocycle entirely depends on the convergence rate of the $\mathcal{U}$-statistically Lipschitz condition.
\begin{proposition}\label{prop:QuantSubgroup}
	Let $\varphi\colon \R^+\to \R^+$ be a non-decreasing map.
	Let $(\Omega,X_H,\mu)$ be a measure subgroup coupling from $G$ to $H$ as in \cref{thm:ME} and let $\alpha\colon G\times X_H\to H$ be associated cocycle. 
	 Then, $\alpha$ is $\varphi$-integrable if  for every $g\in S_G$ there exists a $\delta>0$ such that
	\begin{equation}\label{eq:UQuantLip}
	\sum_{r=0}^\infty \varphi\!\left(\delta r\right)\lim_{\mathcal{U}} \PP_{ \mathcal{G}_n}\left(\left\{x\in \mathcal{G}_n\mid d_{\mathcal{H}_n}(\uu_n(x),\uu_n(xg^{-1}))=r\right\}\right)<\infty.
	\end{equation}
	Similarly, $\alpha$ is strongly $\varphi$-integrable (or $\varphi^\diamond$-integrable)  if for every $\varepsilon>0$ there exists a $\delta>0$ and $C>0$ such that for every $g \in G$ we have that
	\begin{equation}\label{eq:UQuantLipStrong}
	\sum_{r=0}^\infty\varphi\!\left(\delta r\right)\lim_{\mathcal{U}} \PP_{ \mathcal{G}_n}\left(\left\{x\in \mathcal{G}_n\mid d_{\mathcal{H}_n}(\uu_n(x),\uu_n(xg^{-1}))=r\right\}\right)\le C\varphi(\varepsilon |g|).
	\end{equation}
\end{proposition}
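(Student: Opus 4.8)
The plan is to reduce both assertions to the single identity
\[
\int_{X_H}\varphi\!\left(\delta|\alpha(g,\aaa)|_{S_H}\right)d\mu(\aaa)
=\sum_{r=0}^\infty \varphi(\delta r)\,\lim_{\mathcal U}\PP_{\mathcal{G}_n}\!\left(\left\{x\in\mathcal{G}_n\mid d_{\mathcal{H}_n}(\uu_n(x),\uu_n(xg^{-1}))=r\right\}\right),
\]
valid for every $g\in G$ and every $\delta>0$, after which the two statements follow by direct comparison with \cref{defIntro:QuantME}, respectively with the definition of $\varphi^\diamond$-integrability.

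First I would recall from \cref{prop:SubgroupCoupling} that $\alpha(g,\aaa)=\aaa_H(g^{-1})^{-1}$, so that $|\alpha(g,\aaa)|_{S_H}=|\aaa_H(g^{-1})|_{S_H}$ since word length is symmetric. The measure $\mu$ restricted to $X_H=\Omega_0''$ is the push-forward $(J_0)_*\overline{\mu}_0$ of the Loeb measure under the factor map $J_0\colon\overline{\Omega}_0\to\Omega_0''$ of \S\ref{sec:measure}; indeed $J^{-1}(\Omega_0)=\overline{\Omega}_0\times\{e_H\}$ (because $T(e_G,x)=e_H$) and $\overline{\mu}$ gives weight one to $\{e_H\}$. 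Applying the change-of-variables formula and using $J_0(x)_H(g^{-1})=T(g^{-1},x)$ turns the integral over $X_H$ into $\int_{\overline{\Omega}_0}\varphi(\delta|T(g^{-1},x)|_{S_H})\,d\overline{\mu}_0(x)$. By the very definition of $T_n$ one has $|T_n(g^{-1},x)|_{S_H}=d_{\mathcal{H}_n}(\uu_n(x),\uu_n(xg^{-1}))$ whenever $T_n(g^{-1},x)\neq\infty$, and since $\overline{\Omega}_0\subseteq X'''$ the value $|T(g^{-1},x)|_{S_H}=[\,d_{\mathcal{H}_n}(\uu_n(x_n),\uu_n(x_ng^{-1}))\,]_{\mathcal U}$ is finite for $\overline{\mu}_0$-a.e.\ $x$.

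Next I would compute this integral by slicing along the values of the integer-valued function $|T(g^{-1},\cdot)|_{S_H}$. For each $r\in\N$, the level set $\{x\in\overline{\Omega}_0\mid |T(g^{-1},x)|_{S_H}=r\}$ coincides, up to a null set, with $[A_n^r]_{\mathcal U}$, where $A_n^r=\{x\in\mathcal{G}_n\mid d_{\mathcal{H}_n}(\uu_n(x),\uu_n(xg^{-1}))=r\}$; by property (1) of the Loeb theorem (\S\ref{sec:Loeb}) its measure is exactly $\lim_{\mathcal U}\PP_{\mathcal{G}_n}(A_n^r)$. These level sets are pairwise disjoint and, as $|T(g^{-1},\cdot)|_{S_H}$ is finite almost everywhere, they cover $\overline{\Omega}_0$ up to measure zero, so their measures sum to one and no mass escapes to $r=\infty$. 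Writing $\varphi(\delta|T(g^{-1},x)|_{S_H})=\sum_{r}\varphi(\delta r)\,\mathbf 1_{\{|T(g^{-1},\cdot)|_{S_H}=r\}}$ and integrating term by term, which is legitimate by monotone convergence since $\varphi\ge 0$, yields the displayed identity.

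With the identity in hand the conclusions are immediate. For $\varphi$-integrability the hypothesis supplies, for each $s\in S_G$, a $\delta_s>0$ making the corresponding sum finite; taking the common value $\delta=\min_{s\in S_G}\delta_s>0$, a finite positive minimum, keeps every sum finite because $\varphi$ is non-decreasing, and the integral equals this finite sum for each generator, which is precisely the condition of \cref{defIntro:QuantME}. For $\varphi^\diamond$-integrability the identity holds for \emph{all} $g\in G$, so (\ref{eq:UQuantLipStrong}) bounds $\int_{X_H}\varphi(\delta|\alpha(g,\aaa)|_{S_H})\,d\mu$ by $C\varphi(\varepsilon|g|)$ for every $g$, which is exactly the definition. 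The only genuinely delicate point is the interchange of the push-forward, the Loeb measure and the countable slicing, namely justifying that the measure of each level set is the ultralimit $\lim_{\mathcal U}\PP_{\mathcal{G}_n}(A_n^r)$ and that the function is finite a.e.; both rest on the inclusion $\overline{\Omega}_0\subseteq X'''$ together with property (1) of the Loeb theorem.
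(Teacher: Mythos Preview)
Your proof is correct and follows essentially the same approach as the paper: both reduce the integral $\int_{X_H}\varphi(\delta|\alpha(g,\aaa)|)\,d\mu$ to the displayed sum by identifying the level sets of $|\alpha(g,\cdot)|$ with ultralimits of subsets of $\mathcal{G}_n$ and invoking the Loeb measure. The only cosmetic difference is that the paper first decomposes over individual values $h\in H$ (computing $\mu(\{\aaa\in X_H\mid \aaa_H(g)=h\})=\lim_{\mathcal U}\PP_{\mathcal{G}_n}(\{x\mid \uu_n(xg)=\uu_n(x)h\})$ via \S\ref{sec:BasicSubsets}) and then groups by $|h|=r$, whereas you slice directly by $r=|T(g^{-1},\cdot)|$ on $\overline{\Omega}_0$; your version is marginally more streamlined and is more explicit about the measure-theoretic justifications (push-forward, monotone convergence, taking $\delta=\min_{s\in S_G}\delta_s$).
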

\begin{proof}
For all $h\in H$, and $g\in G$, we have
\begin{eqnarray*}
\mu\left(\left\{\aaa\in X_H\mid \aaa_H(g)=h\right\}\right) & = & \mu\left(X_H\cap\left\{\aaa\in \Omega \mid \aaa_H(g)=h\right\}\right) \\
& = & \mu\left(\left\{\aaa\in \Omega''\mid \aaa_H(e_G)=e_H,\;  \aaa_H(g)=h\right\}\right)\\
&= &\lim_{\mathcal{U}} \PP_{\mathcal{G}_n}\left(\left\{x\in\mathcal{G}_n \mid \uu_n(xg)=\uu_n(x)h\right\}\right).
\end{eqnarray*}

Then, for every $g\in G$ we have:
	\begin{align*}
	&\int_{X_H} \varphi\!\left(\delta|\alpha(g^{-1},\aaa)^{-1}|\right) d\mu(\aaa)\\
	& = \int_{X_H} \varphi\!\left(\delta|\aaa_H(g)|\right) d\mu(\aaa)\\
	& = \sum_{h\in H} \varphi\!\left(\delta |h|\right)\mu\left(\left\{\aaa\in X_H\mid \aaa_H(g)=h\right\}\right)\\
	& =\sum_{h\in H} \varphi\!\left(\delta |h|\right)\mu\left(\left\{\aaa\in X_H\mid \aaa_H(g)=h\right\}\right)\\
	& = \sum_{h\in H}\varphi\!\left(\delta |h|\right)\lim_{\mathcal{U}}\PP_{\mathcal{G}_n}\left(\left\{x\in\mathcal{G}_n \mid \uu_n(xg)=\uu_n(x)h\right\}\right)\\
	& = \sum_{r=0}^{\infty} \sum_{|h|=r} \varphi\!\left(\delta |h|\right)\lim_{\mathcal{U}}  \PP_{\mathcal{G}_n}\left(\left\{x\in\mathcal{G}_n \mid \uu_n(xg)=\uu_n(x)h\right\}\right)\\
	& =  \sum_{r=0}^{\infty}  \varphi\!\left(\delta r\right) \lim_{\mathcal{U}}\PP_{\mathcal{G}_n}\left(\left\{x\in\mathcal{G}_n \mid d_{\mathcal{H}_n}(\uu_n(xg),\uu_n(x))=r\right\}\right)
	\end{align*}
where for the fourth equality, we have used (as in \S \ref{sec:BasicSubsets}) that \[\mu\left(\left\{\aaa\in X_H\mid \aaa_H(g)=h\right\}\right)=\overline{\mu}\left(\Big\{x\in \overline{\Omega}_0\mid T(g,x)=h\Big\}\right),\]
which equals \[\lim_{\mathcal{U}}\PP_{\mathcal{G}_n}\left(\left\{x\in\mathcal{G}_n \mid \uu_n(xg)=\uu_n(x)h\right\}\right).\]
Now both claims follow from the definitions of $\varphi$-integrable and strongly $\varphi$-integrable.
\end{proof}

Next, we look at the integrability of $\beta\colon H\times X_G\to G$. For this cocycle, its integrability depends on the convergence rate of the $\mathcal{U}$-statistically expansive condition. 
\begin{proposition}\label{prop:Cocycle}
Let $\varphi\colon \R^+\to \R^+$ be a non-decreasing map,
let $(\Omega,X_G,X_H,\mu)$ be a measure equivalence coupling from $G$ to $H$ as in \cref{thm:ME} and let $\beta\colon H\times X_G\to G$ be the measure equivalence cocycle given by $h*\aaa\in\beta(h,\aaa)^{-1}*X_G$ for every $h\in H$ and almost every $\aaa\in X_G$.
Suppose there exists a $C>0$ such that the maps $\uu_n$ are $C$-dense.
Then, $\beta$ is $\varphi$-integrable if for every $h\in B_H(e,2C+1)$ there exists a $\delta>0$ such that
	\begin{equation}\label{eq:UQuantExp}
\sum_{r=0}^\infty\varphi(\delta r) \lim_{\mathcal{U}} \PP_{\mathcal{H}_n}\left(\left\{y\in \uu_n(\mathcal{G}_n)\mid \diam_{\mathcal{G}_n}(\uu_n^{-1}(y)\cup \uu_n^{-1}(yh))\geq r\right\}\right)<\infty.
\end{equation}
Similarly, $\beta$ is strongly $\varphi$-integrable (or $\varphi^\diamond$-integrable) if  if for every $\varepsilon>0$ there exists a $\delta>0$ and $C'>0$ such that for every $h\in H$ we have that
	\begin{equation}\label{eq:UQuantExpstrong}
	\sum_{r=0}^\infty \varphi(\delta r)\lim_{\mathcal{U}} \PP_{\mathcal{H}_n}\left(\left\{y\in \uu_n(\mathcal{G}_n)\mid \diam_{\mathcal{G}_n}(\uu_n^{-1}(y)\cup \uu_n^{-1}(yh))\geq r\right\}\right)\le C'\varphi(\varepsilon |h|).
	\end{equation}
Besides, if we assume that the point-preimages of $\uu_n$ has diameter at most $C$, then these two conditions can respectively be relaxed to the following ones\footnote{where the inequality for the diameter is replaced by an equality. Note that if $\varphi$ is a quickly growing function (e.g.\ a stretched exponential), then the two conditions are equivalent, so there is no gain in assuming the bound on the diameter of preimages.}
	\begin{equation}\label{eq:UQuantExpC}
\sum_{r=0}^\infty\varphi(\delta r) \lim_{\mathcal{U}} \PP_{\mathcal{H}_n}\left(\left\{y\in \uu_n(\mathcal{G}_n)\mid \diam_{\mathcal{G}_n}(\uu_n^{-1}(y)\cup \uu_n^{-1}(yh))= r\right\}\right)<\infty.
\end{equation}
and 
	\begin{equation}\label{eq:UQuantExpCstrong}
\sum_{r=0}^\infty \varphi(\delta r)\lim_{\mathcal{U}} \PP_{\mathcal{H}_n}\left(\left\{y\in \uu_n(\mathcal{G}_n)\mid \diam_{\mathcal{G}_n}(\uu_n^{-1}(y)\cup \uu_n^{-1}(yh))= r\right\}\right)\le C'\varphi(\varepsilon |h|).
\end{equation}
\end{proposition}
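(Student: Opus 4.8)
The plan is to mirror the computation carried out for $\alpha$ in \cref{prop:QuantSubgroup}, replacing the base space $X_H$ by $X_G$ and exploiting the description $X_G=\bigsqcup_iY_i$ together with the measure formula of \cref{rem:Yi}. Recalling that $\varphi$-integrability of $\beta$ only needs to be checked on generators $s\in S_H$, I would start from
\[\int_{X_G}\varphi\!\left(\delta|\beta(s,\aaa)|\right)d\mu(\aaa)=\sum_{r=0}^{\infty}\varphi(\delta r)\,\mu\bigl(\{\aaa\in X_G\mid |\beta(s,\aaa)|=r\}\bigr),\]
so that the problem reduces to controlling the distribution of the length $|\beta(s,\aaa)|$. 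Via the Loeb construction and the measure computations of \S\ref{sec:BasicSubsets}, the measures of the relevant subsets of $X_G$ will be re-expressed as ultralimits $\lim_{\mathcal U}\PP_{\mathcal{H}_n}(\cdots)$ indexed by $y\in\uu_n(\mathcal{G}_n)$: this is the natural base here, since a point $\aaa\in X_G$ encodes (the ultralimit of) a distinguished $\rho_n=0$ representative $x\in\mathcal{G}_n$, and these are in bijection with $\operatorname{Im}\uu_n\subseteq\mathcal{H}_n$; the change of normalisation between $\PP_{\mathcal{G}_n}$ and $\PP_{\mathcal{H}_n}$ only costs the finite factor $\eta=\lim_{\mathcal U}\#\mathcal{H}_n/\#\mathcal{G}_n$ of \cref{prop:compaireSize}.

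The geometric heart is to identify $|\beta(s,\aaa)|$ with a fibre displacement. Unwinding the $G\times H$-action and the definition of $X_G$, for $\aaa\in Y_i$ corresponding to $y=\uu_n(x)$ the element $\beta(s,\aaa)$ carries $x$ to the distinguished representative of the fibre over the image point $w^{*}\in\operatorname{Im}\uu_n$ that becomes $<$-minimal after translating by $s$; thus $|\beta(s,\aaa)|=d_{\mathcal{G}_n}(x,x^{*})$ with $x\in\uu_n^{-1}(y)$ and $x^{*}\in\uu_n^{-1}(w^{*})$. Two uses of $C$-density are crucial, exactly in the spirit of \cref{prop:expsofic}(\ref{item:StatBiLGroup3}): first, since $\operatorname{Im}\uu_n$ is $C$-dense the minimal value $h_i=\aaa_H(e_G)$ must satisfy $|h_i|\le C$, so only finitely many $Y_i$ carry positive mass; second, the minimising point is then of the form $w^{*}=yh'$ with $|h'|\le 2C+|s|=2C+1$. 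Consequently $|\beta(s,\aaa)|\le\diam_{\mathcal{G}_n}\bigl(\uu_n^{-1}(y)\cup\uu_n^{-1}(yh')\bigr)$, and the event $\{|\beta(s,\aaa)|\ge r\}$ is contained in $\bigcup_{|h'|\le 2C+1}\{y\mid\diam_{\mathcal{G}_n}(\uu_n^{-1}(y)\cup\uu_n^{-1}(yh'))\ge r\}$.

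Combining these, using the crude bound $\mu(\{|\beta(s,\cdot)|=r\})\le\mu(\{|\beta(s,\cdot)|\ge r\})$ together with the event inclusion, I would estimate
\[\int_{X_G}\varphi(\delta|\beta(s,\aaa)|)\,d\mu\le\eta\sum_{|h'|\le 2C+1}\ \sum_{r=0}^{\infty}\varphi(\delta r)\,\lim_{\mathcal U}\PP_{\mathcal{H}_n}\bigl(\{y\in\uu_n(\mathcal{G}_n)\mid\diam_{\mathcal{G}_n}(\uu_n^{-1}(y)\cup\uu_n^{-1}(yh'))\ge r\}\bigr),\]
which is finite by (\ref{eq:UQuantExp}) since each $|h'|\le 2C+1$; this yields $\varphi$-integrability of $\beta$. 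The strongly $\varphi$-integrable statement (\ref{eq:UQuantExpstrong}) follows verbatim: for general $h\in H$ the same analysis lands on elements $yh'$ with $|h'|\le|h|+2C$, feeding (\ref{eq:UQuantExpstrong}) gives a bound $\lesssim\varphi(\varepsilon(|h|+2C))$ over the finitely many such $h'$, and the additive constant is absorbed into $C'\varphi(\varepsilon'|h|)$ after adjusting $\varepsilon$. Finally, under the extra hypothesis that point-preimages have diameter at most $C$, the set $\uu_n^{-1}(y)$ is uniformly bounded, so the sharper pointwise estimate $|\beta(s,\aaa)|\le\diam_{\mathcal{G}_n}(\uu_n^{-1}(y)\cup\uu_n^{-1}(yh'))$ can be fed directly into the integral (by monotonicity of $\varphi$ and summing over the finitely many $h'$), replacing the tail distribution by the exact-level distribution and giving the relaxed conditions (\ref{eq:UQuantExpC}) and (\ref{eq:UQuantExpCstrong}); as the footnote remarks, for rapidly growing $\varphi$ the two coincide.

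The main obstacle is the second step: making rigorous the identification $|\beta(s,\aaa)|=d_{\mathcal{G}_n}(x,x^{*})$ and controlling the minimising image point $w^{*}$. The difficulty is that $X_G$ is built from an arbitrary well-order on $H$ via the ``minimal value at $e_G$'' convention of \cref{rem:Yi}, so one must verify that translating by a generator $s$ moves the distinguished representative by an element whose length is governed only by a single fibre over a nearby image point $yh'$ with $|h'|\le 2C+1$, and not by the global geometry of $\aaa$. This is precisely where $C$-density enters twice, and organising the reduction so that only the finitely many auxiliary elements $h'\in B_H(e_H,2C+1)$ occur — the cocycle analogue of the reduction already performed in \cref{prop:expsofic} — is the technical core of the proof.
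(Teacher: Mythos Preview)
Your proposal is correct and follows essentially the same route as the paper: decompose $X_G=\bigsqcup_i Y_i$, use $C$-density via \cref{prop:fundamentalDomainBis} to reduce to finitely many indices $|h_i|\le C$, unwind $\beta$ to the condition $\aaa(\beta(h,\aaa)^{-1})=(h^{-1}h_i,0)$ and hence to the auxiliary elements $h_{ij}=h_j^{-1}h^{-1}h_i$ of length at most $2C+1$, then pass to $\PP_{\mathcal{H}_n}$ via the bijection between $\{\rho_n=0\}$ and $\uu_n(\mathcal{G}_n)$. One small simplification over your sketch: the paper avoids your ``crude bound'' $\mu(\{|\beta|=r\})\le\mu(\{|\beta|\ge r\})$ by directly summing $\sum_{|g|=r}$ and using that (with the $\rho_n(xg^{-1})=0$ condition retained) for each $y\in\uu_n(\mathcal{G}_n)$ there is at most one admissible pair $(x,x')$, which yields the tail bound $\PP(\diam\ge r)$ in one step.
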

\begin{proof}
First, note that by \cref{prop:fundamentalDomainBis} we have that $\mu(X_G\setminus B_H(e_H,C)*X_H)=0$, so $\mu(Y_i)=0$ for $|h_i|>C$. We let $I$ be the largest integer such that $|h_I|\le C$.

Next, note that for every $h\in H$ and $\aaa\in X_G$ there exists an $i\le I$ such that $h*\aaa\in \beta(h,\aaa)^{-1}*Y_i$. So, $\aaa(\beta(h,\aaa)^{-1})=(h^{-1}h_i,0)$. 
Besides, recall that by definition of $Y_i$, we have \[Y_i\subset h_i*Y_0=\{\aaa\in \Omega''\mid  \aaa(e_G)=(h_i,0)\}.\]
Using $X_G=\bigsqcup_{i=0}^\infty Y_i$, and arguing as in \S \ref{sec:BasicSubsets}, we have, denoting $h_{ij}=h_j^{-1}h^{-1}h_i$:
\begin{align*}
\int_{X_G}\hspace{-2pt} \varphi\!\left(\delta|\beta(h,\aaa)|\right)d\mu(\aaa) & = \sum_{g\in G} \varphi\!\left(\delta|g|\right)\mu\left(\{\aaa\in X_G\mid \beta(h,\aaa)=g\}\right)\\
& = \sum_{i,j\leq I}\sum_{g\in G} \varphi\!\left(\delta|g|\right) \mu\left(\{\aaa\in Y_j\mid \aaa(g^{-1})=(h^{-1}h_i,0)\}\right)\\
& \le  \sum_{i,j\leq I}\sum_{g\in G} \varphi\!\left(\delta|g|\right)\mu\left(\{\aaa\in \Omega''\mid \aaa(e_G)=(h_j,0),\; \aaa(g^{-1})=(h^{-1}h_i,0)\}\right)\\
& =\sum_{i,j\leq I} \sum_{g\in G} \varphi\!\left(\delta|g|\right) \lim_{\mathcal{U}}\PP_{\mathcal{G}_n}\big(\{x\in\mathcal{G}_n\mid \uu_n(xg^{-1})=\uu_n(x)h_{ij}, \\
&\AlignRight{\rho_n(x)=\rho_n(xg^{-1})=0\big\}\big)}\\
& \le \sum_{i,j\leq I}\sum_{g\in G} \varphi\!\left(\delta|g|\right) \lim_{\mathcal{U}}\PP_{\mathcal{G}_n}\big(\{x\in\mathcal{G}_n\mid \uu_n(xg^{-1})=\uu_n(x)h_{ij},
\rho_n(x)=0\big\}\big)\\
& =  \sum_{i,j\leq I} \sum_{r=0}^{\infty} \varphi\!\left(\delta r\right)\sum_{|g|=r} \lim_{\mathcal{U}}\PP_{\mathcal{G}_n}\big(\{x\in\mathcal{G}_n\mid \uu_n(xg^{-1})=\uu_n(x)h_{ij},
\rho_n(x)=0\big\}\big).
\end{align*}

For every $x$ such that $\uu_n(xg^{-1})=\uu_n(x)h_{ij}$ and 
$\rho_n(x)=0$, denote $y=\uu_n(x)$. 
Note that $x\in \uu_n^{-1}(y)$ and $xg^{-1}\in \uu_n^{-1}(yh_{ij})$. 
Restricting to $x\in{\mathcal{G}_n}^{(2r)}$ we can assume that $d(x,xg^{-1})=r$. Hence we get 
\[r\le \diam(\uu_n^{-1}(y)\cup \uu_n^{-1}(yh_{ij})).\]
 If we assume that pre-images of $\uu_n$ have diameter at lost $C$, we get by triangular inequality
\[r\le \diam(\uu_n^{-1}(y)\cup \uu_n^{-1}(yh_{ij}))\le r+2C.\]
Recall that $\uu_n$ is injective in restriction  to $\{x\in\mathcal{G}_n\mid 
\rho_n(x)=0\big\}$. 
Hence denoting $\eta= \lim_{\mathcal{U}} \frac{\#\mathcal{H}_n}{\#\mathcal{G}_n}$, we deduce that
\begin{align*}
& \sum_{|g|=r} \lim_{\mathcal{U}}\PP_{\mathcal{G}_n}\big(\{x\in\mathcal{G}_n\mid \uu_n(xg^{-1})=\uu_n(x)h_{ij}, \;
\rho_n(x)=0\big\}\big)\\
& \le
\eta \lim_{\mathcal{U}}\PP_{\mathcal{H}_n}\big(\{y\in\mathcal{H}_n \mid r\le \diam_{\mathcal{G}_n}(\uu_n^{-1}(y)\cup \uu_n^{-1}(yh_{ij}))\big\}\big),
\end{align*}
and if we assume the diameter bound on the pre-images, this becomes:
\begin{align*}
& \sum_{|g|=r} \lim_{\mathcal{U}}\PP_{\mathcal{G}_n}\big(\{x\in\mathcal{G}_n\mid \uu_n(xg^{-1})=\uu_n(x)h_{ij}, \;
\rho_n(x)=0\big\}\big)\\
& \le
\eta \lim_{\mathcal{U}}\PP_{\mathcal{H}_n}\big(\{y\in\mathcal{H}_n \mid r\le \diam_{\mathcal{G}_n}(\uu_n^{-1}(y)\cup \uu_n^{-1}(yh_{ij}))\le r+2C\big\}\big).
\end{align*}
We deduce that 
\begin{align*}
& \int_{X_G}\hspace{-2pt} \varphi\!\left(\delta|\beta(h,\aaa)|\right)d\mu(\aaa)\\ & \le \eta \sum_{i,j\leq I} \sum_{r=0}^{\infty} \varphi\!\left(\delta r \right)\lim_{\mathcal{U}}\PP_{\mathcal{H}_n}\big(\{y\in\mathcal{H}_n \mid r\le \diam_{\mathcal{G}_n}(\uu_n^{-1}(y)\cup \uu_n^{-1}(yh_{ij}))\big\}\big),
\end{align*}
which implies $\varphi$-integrability and strong $\varphi$-integrability under respectively (\ref{eq:UQuantExp}) and (\ref{eq:UQuantExpstrong}). Indeed, for integrability, we only need it for $h\in S_H$, in which case each $h_{ij}$ has word length at most $2C+1$.
Under the diameter bound we get
\begin{align*}
& \int_{X_G}\hspace{-2pt} \varphi\!\left(\delta|\beta(h,\aaa)|\right)d\mu(\aaa)\\ & \le \eta \sum_{i,j\leq I} \sum_{r=0}^{\infty} \varphi\!\left(\delta r \right)\lim_{\mathcal{U}}\PP_{\mathcal{H}_n}\big(\{y\in\mathcal{H}_n \mid r\le \diam_{\mathcal{G}_n}(\uu_n^{-1}(y)\cup \uu_n^{-1}(yh_{ij}))\le r+2C\big\}\big). 
\end{align*}
By monotonicity of $\varphi$, this implies $\varphi$-integrability under (\ref{eq:UQuantExpC}). Strong integrability follows from (\ref{eq:UQuantExpCstrong}), combined with the fact that $|h|-2C\leq |h_{ij}|\leq |h|+2C$. 
 \end{proof}

\section{Proofs of theorems from the introduction}\label{sec:Proofs}

Let us start with the qualitative statements: 
Theorem \ref{thm:Subgroup} and Theorem \ref{thm:ME} are immediate corollaries of \cref{prop:SubgroupCoupling} and \ref{prop:OECoupling} respectively.

We now turn to the proofs of Theorems \ref{thm:quantSubgroup} and \ref{thm:quantME}. 
We start showing that (\ref{eq:quantLip})  implies that $\uu_n$ is $\mathcal{U}$-statistically Lipschitz. Let us argue by contradiction. Then, by \cref{prop:StatBiLGroups} there exists an $\varepsilon>0$ such that for all $C_0$ there exists $C'>C_0$ such that for some $s\in S_G$ we have that 
\[\lim_{\mathcal U}\PP_{\mathcal{G}_n}\left(\left\{x\in \mathcal{G}_n\mid d_{\mathcal{H}_n}(\uu_n(x),\uu_n(x.s))\ge C'\right\}\right)\ge \varepsilon.\]
Note that the events $\left[\{x\in \mathcal{G}_n\mid d_{\mathcal{H}_n}(\uu_n(x),\uu_n(x.s))\ge C'\}\right]_{\mathcal U}$ are disjoint, hence by $\sigma$-additivity of $\nu$, we have
\[\lim_{\mathcal U}\PP_{\mathcal{G}_n}\left(\left\{x\in \mathcal{G}_n\mid d_{\mathcal{H}_n}(\uu_n(x),\uu_n(x.s))\ge C'\right\}\right)=\sum_{r\geq C'}\lim_{\mathcal U}\PP_{\mathcal{G}_n}\left(\{x\in \mathcal{G}_n\mid d_{\mathcal{H}_n}(\uu_n(x),\uu_n(x.s))=r\}\right)\]

By (\ref{eq:quantLip}), and using that $\varphi$ is unbounded and nondecreasing,  for every $\delta>0$ we can take $C_0$ large enough such that
\begin{align*}
	\varphi\left(\delta C_0\right) \varepsilon & >  \sum_{r=0}^R \varphi\!\left(\delta r\right)\lim_{\mathcal U}\PP_{\mathcal{G}_n}\left(\{x\in \mathcal{G}_n\mid d_{\mathcal{H}_n}(\uu_n(x),\uu_n(x.s))=r\}\right)\\
	& \ge \varphi\left(\delta C'\right) \lim_{\mathcal U} \PP_{\mathcal{G}_n}\left(\{x\in \mathcal{G}_n\mid d_{\mathcal{H}_n}(\uu_n(x),\uu_n(x.s))\ge C'\}\right)\\
	& \ge \varphi\left(\delta C'\right) \varepsilon,
\end{align*}
leading to the desired contradiction. 
 This is enough to end the proof of Theorem \ref{thm:quantSubgroup}. Indeed, we have seen that it is $\mathcal U$-statistically Lipschitz, and since it has pre-images of diameter  at most $C$, it satisfies the assumptions of \cref{prop:SubgroupCoupling}, which provides a measure subgroup subgroup coupling from $G$ to $H$. We then conclude thanks to \cref{prop:QuantSubgroup}

To prove  Theorem \ref{thm:ME}, we need to show that the sequence $(\uu_n)_n$ is $\mathcal U$- statistically expansive. The proof is similar: suppose it is not, then, by \cref{prop:expsofic}, there exists an $\varepsilon>0$ and an $h\in B_H(e_H,2C+1)$ such that for all $C_0$ there exists $C'>C_0$ such that
\[\lim_{\mathcal U}\PP_{\mathcal{H}_n}\left(\left\{z\in \uu_n(\mathcal{G}_n)\mid zh\in \uu_n(\mathcal{G}_n), \diam_{\mathcal{G}_n}(\uu_n^{-1}(z)\cup \uu_n^{-1}(zh))\ge C'\right\}\right)\ge \varepsilon.\]
Since $\varphi$ is unbounded, for any $\delta>0$,  we can take $C_0$ such that
\begin{align*}
\varphi\left(\delta C_0\right) \varepsilon & >\sum_{r=0}^\infty \varphi\!\left(\delta r\right) \lim_{\mathcal U} \PP_{\mathcal{H}_n}\left(\{z\in \uu_n(\mathcal{G}_n)\mid zh \in \uu_n(\mathcal{G}_n),  \diam_{\mathcal{G}_n}(\uu_n^{-1}(z)\cup \uu_n^{-1}(zh))=r\}\right)\\
& \ge \varphi\left(\delta C'\right)\lim_{\mathcal U}\PP_{\mathcal{H}_n}\left(\{z\in \uu_n(\mathcal{G}_n)\mid zh\in \uu_n(\mathcal{G}_n),  \diam_{\mathcal{G}_n}(\uu_n^{-1}(z)\cup \uu_n^{-1}(zh))\ge C\}\right)\\
& \ge \varphi\left(\delta C'\right) \varepsilon.
\end{align*}
We deduce that under the assumptions of \cref{thm:quantME}, the sequence $(\uu_n)$ is $\mathcal U$-statistically a coarse equivalence. Hence \cref{prop:OECoupling} provides us with an ME coupling (OE if we assume in addition that $\uu_n$ is bijective). The conclusion now follows from \cref{prop:Cocycle}.

\begin{remark}\label{rem:Strong}
	In \cref{thm:quantSubgroup} (and \cref{thm:quantME}), we obtain {\bf strong} integrability if we assume the stronger version of (\ref{eq:quantLip}): that for every $\varepsilon>0$ there exists a $\delta>0$ and $C'>0$ such that
	\[\sum_{r=0}^\infty \varphi\!\left(\delta r\right)\lim_{\mathcal U}\PP_{\mathcal{G}_n}\left(\{x\in \mathcal{G}_n\mid d_{\mathcal{H}_n}(\uu_n(x),\uu_n(xg))=r\}\right)\le C'\varphi(\varepsilon|g|)\]
	for every $g\in G$. Similarly the coupling is $(\varphi,\psi^\diamond)$-integrable if for every $\varepsilon>0$ there exists a $\delta>0$ and $C'>0$ such that
	\begin{equation}\label{eq:quantExpStrong}
	\sum_{r=0}^\infty \psi\!\left(\delta r\right)\lim_{\mathcal U} \PP_{\mathcal{H}_n}\left(\{y \in \uu_n(\mathcal{G}_n)\mid \diam_{\mathcal{G}_n}(\uu_n^{-1}(y)\cup\uu_n^{-1}(yh)) = r\}\right)\le C'\psi(\varepsilon |h|)\end{equation}
	for every $h\in H$ and it is $(\varphi^\diamond,\psi^\diamond)$-integrable if both conditions are satisfied.
	\end{remark}

\section{Application to the class $\mathcal M$}\label{sec:M}
This section is dedicated to the proof of \cref{thm:SolvableExpME}. Our main contribution is the following more specific theorem.

\begin{theorem}\label{thm:example}
	Let $k\ge 2$ and  $A\in \GL_2(\Z)$ has a real eigenvalue $\lambda>1$. 
 Then there exists a mutually cobounded $(\LL^\infty,\exp^\diamond)$-measure equivalence coupling from $\Lamp_k$ to $\SOL_A=\Z^2\rtimes_A\Z$.
\end{theorem}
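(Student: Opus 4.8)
The plan is to produce the coupling by applying the strong version of \cref{thm:quantME} (see \cref{rem:Strong}), or equivalently the building blocks \cref{prop:OECoupling}, \cref{prop:QuantSubgroup} and \cref{prop:Cocycle}, with $G=\Lamp_k$, $H=\SOL_A$. Both groups are amenable (the lamplighter is metabelian, and $\SOL_A$ is polycyclic), so their Cayley graphs carry Følner sequences, which are sofic approximations. I would choose these to be explicit ``boxes'' adapted to the common horocyclic-product geometry. Realising $\Lamp_k$ on the Diestel--Leader graph $DL(k,k)$, take at scale $n$ the set $\mathcal{G}_n$ of pairs (cursor $t\in\{0,\dots,n\}$, lamp configuration supported on $\{1,\dots,n\}$); with $t$ read as a common height, the fibre over $t$ splits as a ``past'' factor of size $k^{t}$ and a ``future'' factor of size $k^{n-t}$. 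After diagonalising $A$ over $\R$ (eigenvalue $\lambda>1$ and a second eigenvalue $\mu$ with $|\mu|=\lambda^{-1}$), take for $\SOL_A$ the box $\mathcal{H}_m$ of triples $(x,y,s)$ with $0\le s\le m$, $|x|\le\lambda^{s}$, $|y|\le\lambda^{m-s}$, whose fibre over height $s$ splits into an expanding factor of size $\asymp\lambda^{s}$ and a contracting factor of size $\asymp\lambda^{m-s}$. These are standard Følner sequences; setting $m=m(n)$ to be the nearest integer to $n\log k/\log\lambda$ makes $\#\mathcal{H}_{m(n)}\asymp\#\mathcal{G}_n$, so that $\lim_{\mathcal{U}}\#\mathcal{H}_{m(n)}/\#\mathcal{G}_n$ is finite and positive.

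The heart of the argument is the construction of $\uu_n\colon\mathcal{G}_n\to\mathcal{H}_{m(n)}$. I would rescale heights by the constant $c=\log k/\log\lambda$, letting $s$ be the nearest integer to $ct$, and on each of the two factors encode the nested $k$-adic interval structure of the tree into the $\lambda$-adic structure of the corresponding SOL coordinate: the past configuration $\phi|_{[1,t]}$, read as the $k$-adic point $u=\sum_{i\le t}\phi(i)k^{-i}$, is sent to the lattice point nearest $u\lambda^{s}$, and symmetrically for the future configuration and the contracting coordinate. Each generator of $\Lamp_k$ either shifts the cursor by one (changing $s$ by the bounded amount $\approx c$ and absorbing one new digit) or toggles the lamp at the cursor (changing $u$ by $k^{-t}$, hence $x$ by $O(1)$); thus $\uu_n$ is uniformly Lipschitz, and by the size matching it has $C$-dense image and point-preimages of size $O(1)$. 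This already gives, through \cref{prop:fundamentalDomainBis}, that the resulting coupling is mutually cobounded, and the uniform bound $|T_n(g,x)|_{S_H}\le C|g|$ forces $\alpha$ to be essentially bounded, i.e.\ $\LL^\infty$ (equivalently, \eqref{eq:quantLip} holds trivially for the forward cocycle, and \cref{prop:QuantSubgroup} applies).

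It remains to verify, via \cref{prop:Cocycle}, the strong exponential condition \eqref{eq:UQuantExpCstrong} for $\psi(t)=e^{t}$: the tail of $\diam_{\mathcal{G}_n}(\uu_n^{-1}(y)\cup\uu_n^{-1}(yh))$ must decay exponentially, with the uniform factor $e^{\varepsilon|h|}$. This is the main obstacle, and it is exactly where the tree geometry does the work. The inverse of $\uu_n$ fails to be Lipschitz precisely when two SOL-adjacent cells $y,yh$ have preimages lying in distinct $k$-adic subtrees whose common ancestor is deep: if that ancestor sits at depth $t-j$, the preimages lie at graph-distance $\asymp 2j$ in $\mathcal{G}_n$, but the proportion of cells straddling a depth-$(t-j)$ split is only $O(k^{-j})$. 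Hence $\PP(\diam\ge r)\lesssim k^{-r/2}$, so $\sum_r e^{\delta r}\,\PP(\diam=r)$ converges for $\delta<\tfrac12\log k$; passing from $y$ to $yh$ shifts the relevant split-depth, and the finitely many elements $h_{ij}$ appearing in \cref{prop:Cocycle}, by only $O(|h|)$, which yields the bound $C'e^{\varepsilon|h|}$ after choosing $\delta$ small. The genuinely delicate point is that $\lambda$ is in general irrational (indeed transcendental), so the matching of $k$-adic and $\lambda$-adic levels drifts as $t$ grows; I would absorb the accumulated rounding defects into the deep-split events and check that they too occur with exponentially small probability, so the estimate survives. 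Granting this, \cref{thm:quantME} (strong form) produces a mutually cobounded $(\LL^\infty,\exp^\diamond)$-integrable coupling from $\Lamp_k$ to $\SOL_A$. Finally, \cref{thm:SolvableExpME} follows by composing this coupling with the $(\LL^\infty,\exp^\diamond)$-coupling from $\Lamp_k$ to $\operatorname{BS}(1,k)$ of \cite{DKLMT-22} and invoking the fact that mutually cobounded $\exp^\diamond$-measure equivalence is an equivalence relation.
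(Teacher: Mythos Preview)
Your overall strategy—Følner boxes as sofic approximations, a height-preserving map that encodes lamp configurations as $k$-adic reals into the two horospherical coordinates, uniform Lipschitz for the forward direction, and an exponential tail $\PP(\diam\ge r)\lesssim k^{-cr}$ coming from the depth of the nearest branching point—is exactly the mechanism the paper uses. The core estimate you sketch (Claim~\ref{clai:J} in the paper) is the same: two points whose images are close but whose preimages are far apart force a long run of digits equal to $0$ or $k-1$, an event of probability $\lesssim k^{-m}$.

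Where your route diverges is precisely at the ``genuinely delicate point'' you flag, and here the paper does something cleaner that you should know about. Rather than rescaling heights by $\log k/\log\lambda$ and mapping $k$-adic to $\lambda$-adic structure directly (which, as you note, produces an accumulated drift whose control is not obvious), the paper first maps $\Lamp_k$ into the \emph{real} group $\SOL_\R=\R^2\rtimes\R$, taken with the \emph{same} base $k$: explicitly $\vv(x,m)=(\sum\eps_ik^i,\sum\eps_ik^{-i},m)$. Heights match on the nose, the map is $2$-Lipschitz, has coarsely dense image in the obvious Følner boxes, and the carry/branching analysis is done entirely in base~$k$ with no rounding. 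Only then does the paper use that $\SOL_A$ sits as a uniform lattice in $\SOL_\R$ (after replacing $k$ by a power so that $k\ge\lambda$, which is harmless since $\Lamp_{k^q}$ is quasi-isometric to $\Lamp_k$): one perturbs $\vv$ at uniformly bounded distance to an injective map landing in a copy of $\SOL_A$. All the Lipschitz and exponential-tail estimates survive bounded perturbation, so the irrationality issue simply never arises.

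Your proposal is not wrong in spirit, but the step ``absorb the accumulated rounding defects into the deep-split events'' is the whole difficulty, and you have not indicated how to do it; I do not see that it is routine. The paper's two-stage route (through $\SOL_\R$, then project to the lattice) bypasses it entirely and is what you should adopt.
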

\begin{proof}
Recall that according to our notation, $\Lamp_k=\Z/k\Z\wr \Z$. Let $\pi:\Lamp_k\to \Z$ be the projection to $\Z$. Given $q\geq 1$, the pre-image of $q\Z$ by $\pi$ is isomorphic to $(\Z/k\Z)^q\wr \Z$, which itself is bi-Lipschitz equivalent to $\Lamp_{k^q}$. In particular, $\Lamp_{k^q}$ and $\Lamp_k$ are quasi-isometric. Hence in order to prove the theorem, it is harmless to assume that $k$ is arbitrarily large. For reasons that will appear later (which are not essential), we will assume from now on that $k\geq \lambda$.

We will use the fact that $\SOL_A$ is a uniform lattice in the group $\SOL_\R=\R^2\rtimes \R$ where $\R$ acts by $t\cdot (x,y)=(k^tx,k^{-t}y)$. 

The proof of the theorem goes in three steps: first we construct a sequence of maps between F\o lner sequences of $\Lamp_k$ and  $\SOL_\R$ (by far the main step of the proof). Actually we will construct a unique map between the groups and consider its restriction to a F\o lner sequence. Second,  we will perturb this map slightly (at bounded distance) to have it land in a copy of $\SOL_A$ embedded as a lattice in $\SOL_\R$. 
Along the way all the metric estimates will be in terms of the word metrics of the groups. Hence a last technical step will be required to translate them in terms of the intrinsic graph distances of the F\o lner sets.

\

\noindent{\bf First step: constructing a map from $\Lamp_k$ to $\SOL_\R$.}

We start defining generating sets for the groups $\Lamp_k$ and $\SOL_\R$. 
We let $S^\Lamp=S_1^\Lamp\cup S_2^\Lamp$, where $S_1^\Lamp$ consists of elements $(x,0)$, where $x$ is supported at $0$, and $S_2^\Lamp=\{(0,\pm 1\}$.
Similarly we let $S^{\SOL}=S_1^{\SOL}\cup S_2^{\SOL}$, where $S_1^{\SOL}$ consists of elements $(x,0)$, where $\|x\|_\infty\leq 1$, and $S_2^{\SOL}=\{(0,t\}$, with $|t|\leq 1$. We shall denote by $|g|$ the word length of an element of either $\Lamp_k$ or $\SOL_\R$.
We shall use repeatedly the following easy estimates, whose verifications are left to the reader.
 For $g=(x,j)\in \Lamp_k$, 
\[\max \left\{\diam(\supp x\cup \{0\}),|j|\right\} \leq |g|\leq 2\diam(\supp x\cup \{0\})+|j|.\]
and for $g=(x,j)\in \SOL_\R,$
 \[\max \left\{\log_k\|x\|_{\infty},|j|\right\} \leq |g|\leq 2\log_k(1+ \|x\|_{\infty})+2|j|.\]

We consider the following F\o lner sequences of $\Lamp_k$ and $\SOL_\R$: 
\[F_n^\Lamp=\left\{(x,m)\mid \supp x\subset [-n,n], \; 0\leq m\leq n\right\},\] 
and 
\[F_n^{\SOL}=\left\{(x,m)\in \R_+^2\times \R\mid \|x\|_\infty \leq k^{n+1}, \; 0\leq m\leq n\right\}.\] 
Let us check that these are indeed right F\o lner sequences. For both, almost invariance under right multiplication by $S_2^\Lamp$ (resp.\ $S_2^{\SOL}$) is true because it only changes the second coefficient. For $F_n^\Lamp$, we observe that it is invariant under right multiplication by $S_1^\Lamp$, so it is duly a F\o lner sequence. Note that right multiplication by $S_1^{\SOL}$ preserves each coset of $\R^2$. Besides, for every $j\in \R$, we have 
\[([0,k^{n+1}]^2\times\{j\}) S_1^{\SOL}\subset [-k^j,k^{n+1}+k^j]^2\times\{j\}. \]
The Haar measure $\lambda$ on  $\SOL_\R$ is simply the product of the Lebesgue measure on $\R^3$.
Therefore, we have 
\[\lambda(([0,k^{n+1}]^2\times\{j\}) S_1^{\SOL})\leq (k^{n+1}+2k^j)^2,\]
so
\[\lambda(F_n^{\SOL}S_1^{\SOL})\leq \int_{0}^n (k^{n+1}+2k^j)^2dj=nk^{2n+2}+4k^{n+1}\int_{0}^nk^jdj+2\int_{0}^nk^{2j}dj\leq (n+6)k^{2n+2},\]
And since $\lambda(F_n^{\SOL})=nk^{2n+2}$, we deduce that 
\[\frac{\lambda(F_n^{\SOL}S_1^{\SOL})}{\lambda(F_n^{\SOL})}\leq \frac{n+6}{n},\]
which tends to $1$ as $n\to \infty$. Hence $F_n^{\SOL}$ is a F\o lner sequence.

\

We now define a map $\vv:\Lamp_k\to \SOL_\R$  as follows. For $x=(\eps_i)\in \oplus_\Z \Z/k\Z$, we set
\[\vv(x,m)=\left(\sum \eps_i k^i, \sum \eps_i k^{-i},m\right).\]

Before analyzing the properties of this map, let us pause, and work out  the following baby case as a warmup. Consider the function $f:\oplus_n \Z/k\Z\to \R_+$ defined  for each $x=(\eps_i)$ by $f(x)=\sum \eps_i k^i$. Suppose we equip the group  $\oplus_n \Z/k\Z$ with the length $|x|=\max (\supp x)$, and  $\R_+$ is equipped with the distance $d(x,y)=\log (1+|x-y|)$. Note that this is similar to the restriction of $\vv$ to $\bigoplus_\Z \Z/k\Z$.  It is easy to see that the map $f$ is injective, Lipschitz, and takes values in $\N$. Besides, for all $n\in \N$, $f$ maps bijectively $K_n=\{x\mid \supp x\subset [0,n]\}$ to $I_n=[0, k^{n+1}-1]\cap \N$. By contrast, this map is not coarsely expansive. This results from the fact that $\R$ is Archimedean, which manifests itself through carries: assume we add $1$ to $x=\sum \eps_i k^i$, and then write the $k$-adic decomposition of the resulting number, we get $x+1=\sum_i \eps_i'$, where $\eps_i'\neq \eps_i$ for all $i\leq m$, where $m$ is the smallest integer such that $\eps_i\neq k-1$. Hence the pre-image of $x+1$ lies at distance $m$ from $x$, and $m$ can be arbitrarily large. However, large values of $m$ happen ``exponentially rarely''. Let us indeed evaluate the proportion of elements of $x\in K_n$ such that $d(x,f^{-1}(f(x)+1))\geq m$: since it imposes that the first $m-1$ coordinates of $x$ are equal to $k-1$, this proportion is at most $k^{-m+1}$. 

We are now ready to study our map $\vv$, for which a similar phenomenon occurs, although in a more complicated way, as shown by the following lemma.

\begin{lemma}\label{lem:psi}
The map $\vv$ is $2$-Lipschitz, and maps $F_n^\Lamp$ to a $(1+\log_k (1+2k))$-coarsely dense subset of $F_n^{\SOL}$. 
It satisfies the following (statistic expansivity) property: for all integers $q,m\geq 1$,
the proportion of points $g\in F_n^\Lamp$ such that $\diam\vv^{-1}(B_{\SOL}(\vv_n(g),q))\geq 2m+3q$ is at most $4k^{-m+1}$. Finally, For every $r$, the preimage of balls of radius $q$ has cardinality at most $(2q+1)k^{2q+1}$.
\end{lemma}

\begin{proof}
Let us prove Lipschitz: for this it is enough to consider neighboring vertices: for those who differ by their $\Z$-coordinate, the statement is obvious, so consider the case of two elements $g,g'\in \Lamp_k$ such that $g'=gs$ for some $s\in S_1^{\Lamp}$. We then have $g=(x,j)$ and $gs=(x',j)$ where $x=(\eps_i)$ and $x'=(\eps'_i)$ only differ in their $j$-coordinate. We have $\vv(g)=(y,j)$, where $y=(\sum_i \eps_i k^i, \sum_i \eps_i k^{-i})$ and $\vv(g')=(y',j)$ with $y'=(\sum_i \eps'_i k^i, \sum_i \eps'_i k^{-i})$. So
\[\vv(g)^{-1}\vv(g')=\left(\sum_i(\eps'_i-\eps_i) k^{i-j},\sum_i(\eps'_i-\eps_i) k^{j-i},0\right)=(\eps'_j-\eps_j,\eps_j-\eps'_j,0),\] 
which has length at most $2$. Hence the map is $2$-Lipschitz.

Let us prove that $\vv$ maps $F_n^\Lamp$ to a coarsely dense subset of $F_n^{\SOL}$. The fact that $\vv(F_n^\Lamp)\subset F_n^{\SOL}$ is obvious, so we focus on the coarse density. We shall prove that every element $h=(a,b,j)\in F_n^{\SOL}$ lies at bounded distance from $\vv(F_n^\Lamp)$. Perturbating $h$ at distance at most one, we can assume that $j\in \Z$. Let $g=(x,j)\in \Lamp_k$, with $x=(\eps_i)$.
We have $\vv(g)=\left(\sum_i\eps_ik^i,\sum_i \eps_ik^{-i}, j\right)$. Hence,
\[\vv(g)^{-1}h=\left(\left(k^{-j}a-\sum_i \eps_ik^{i-j}\right), \left(k^jb-\sum_i \eps_ik^{-i+j}\right), 0\right).\] 
Since $h\in  F_n^{\SOL}$, we have $\lfloor k^{-j}a\rfloor\leq k^{n-j+1}$ and $\lfloor k^{j-1}b\rfloor\leq k^{n+j}$, so their  $k$-adic decompositions are respectively supported in $[0,n-j]$ and $[0,n+j-1]$. Now, choose $\eps_i$ be such that $\sum_{i\geq j} \eps_ik^{i-j}$ is the $k$-adic decomposition of $\lfloor k^{-j}a\rfloor$, and  $\sum_{i< j} \eps_ik^{-i+j-1}$ is the $k$-adic decomposition of $\lfloor k^{j-1}b\rfloor$. It follows that $\eps_i$ is supported on $[-n,n]$, hence that $g\in F_n^\Lamp$. 
Besides, \[\left|k^{-j}a-\sum_{i\geq j} \eps_ik^{i-j}\right|=k^{-j}a-\lfloor k^{-j}a\rfloor
\leq 1\] and  \[\left|k^jb-\sum_{i<j} \eps_ik^{-i+j}\right|=k\left|k^{j-1}b-\sum_{i<j} \eps_ik^{-i+j-1}\right|=k(k^{j-1}b-\lfloor k^{j-1}b\rfloor)\leq k.\]
We therefore have $\left|k^{-j}a-\sum_{i} \eps_ik^{i-j}\right|\leq 2$ and $\left|k^jb-\sum_{i} \eps_ik^{-i+j}\right|\leq 2k$, and so we have duly constructed an element $g\in F_n^\Lamp$ such that $d(\vv(g),h)\leq \log_k (1+2k)$.

We now turn our attention to the last two properties, which will be proved all at once. 
For this we fix some $g\in F_n^\Lamp$.
Let $g'\in \Lamp_k$ such that $d(\vv(g'), \vv(g))\leq q.$ Note that if $g=(x,j)$ and $g'=(x',j')$, we have \[g^{-1}g' = ((-j)\cdot (x'-x), j'-j).\]
Write $x=(\eps_i)$ and $x'=(\eps'_i)$. We have
\[\vv(g)^{-1}\vv(g')=\left(\sum_i(\eps'_i-\eps_i) k^{i-j},\sum_i(\eps'_i-\eps_i) k^{j-i},j'-j\right).\]
The condition that $d(\vv(g'), \vv(g))\leq q$ implies that $|j-j'|\leq q$. So $d(g,g')\geq 2m+3q$ implies that $2\diam((\supp(x'-x)-j)\cup \{0\})+q\geq 2m+3q$, hence  $\diam((\supp(x'-x)-j)\cup \{0\})\geq m+q$.
We let $\ell_+,\ell_-$ be respectively the maximal, minimal integer such that $\eps_{\ell_+}\neq \eps'_{\ell_+}$,  $\eps_{\ell_-}\neq \eps'_{\ell_-}$.
Note that either $\ell_+\geq  j+m+q$ or $\ell_-\leq j-m-q$.
Let $J_-=[\ell_-+1,j-q-1]$ and $J_+=[j+q+1,\ell_+-1]$. Note that one of these intervals can possibly be empty, but one of them has size at least $m-1$.
\begin{clai}\label{clai:J}
In each interval $J_-$ and $J_+$, either $\eps_i$ is identically $0$, and $\eps'_i$ is identically $k-1$, or vice versa.
\end{clai}
\begin{proof}
Both cases are treated similarly, so we only consider $J_+$, which we can assume to be non empty.  
We have
\[\sum_i(\eps'_i-\eps_i) k^{i-j}=(\eps'_{\ell_+}-\eps_{\ell_+})k^{{\ell_+}-j}+\sum_{i<\ell_+}(\eps'_i-\eps_i)k^{i-j}.\]
By symmetry, we can assume without loss of generality that $\eps'_{\ell_+}>\eps_{\ell_+}$.
Therefore,
\[\left|\sum_i(\eps'_i-\eps_i) k^{i-j}\right|=\sum_i(\eps'_i-\eps_i) k^{i-j}\geq k^{\ell_+-j}+\sum_{i<\ell_+}(\eps'_i-\eps_i)k^{i-j}.\]
We let $t$ be the maximal integer  $<\ell_+$ such that $\eps'_{t}-\eps_{t}\neq -k+1$. We have
\[\left|\sum_i(\eps'_i-\eps_i) k^{i-j}\right|\geq k^{\ell_+-j}-(k-1)\sum_{i=t}^{\ell_+-1}k^{i-j}=k^{t-j}.\]
On the other hand, since $d(\vv(g'), \vv(g))\leq q$, we have $t-j\leq q$. In other words, $\eps_i'-\eps_i=-k+1$ for all $r+j<i< \ell_+$, hence for all $r+j<i<m+q+j$. This imposes for such values of $i$ that $\eps_i=k-1$ and $\eps'_i=0$, and the claim is proven.
\end{proof}
It is now straightforward to deduce our two properties from the claim. First, since the coordinates of $x$ on $J_+$ and $J_-$ are prescribed, with two possible values, the proportion of such $g\in F_n^\Lamp$ is at most $4k^{-m+1}$. 
Second, if $x$ is given, we see that the coordinates of $x'$ outside the interval $[j-q,j+q]$ are determined by those of $x$. This leaves $k^{2q+1}$ possibilities for choosing $x'$, and since $|j'-j|\leq q$, we have $(2q+1)k^{2q+1}$ for the choice of $g$. This ends the proof of the lemma.
\end{proof}

\noindent{\bf Second step: constructing a map from $\Lamp_k$ and  $\SOL_A$.} 

Let us describe an embedding of  $\SOL_A$ in  $\SOL_\R$. We start with the obvious embedding of $\SOL_A$ in the semi-direct product $\R^2\rtimes_A \R$, where the notation means that $t\in \R$ acts by multiplication by the matrix $A^t$. Denote by $\Gamma_0'$ the image of $\SOL_A$. A relatively compact fundamental domain $D_0'$ for the action by left-translations of $\Gamma_0$ on $\R^2\rtimes_A \R$ is simply given by $[0,1)^3$.
Choose a basis $(v_+,v_-)$ of eigenvectors of $A$ associated to the eigenvalues $\lambda$ and $\lambda^{-1}$. We have an isomorphism $ \SOL_\R\to \R^2\rtimes_A \R$ given by \[(a,b,t)\mapsto (av_++bv_-, \frac{\log \lambda}{\log k}t),\]
whose inverse will be denoted by $\mathfrak{p}$. Denote $\Gamma_0=\mathfrak{p}(\Gamma_0')$ and  $D_0=\mathfrak{p}(D_0')$.  By construction, $\Gamma_0$ is a copy of $\SOL_A$ embedded as a lattice in $\SOL_\R$, and $D_0$ is a fundamental domain for its action by left-translations. Note that $D_0$
 is relatively compact, and with no-empty interior and contains the neutral element.

Although $\vv$ is obviously injective, its image is a priori not uniformly discrete\footnote{It is indeed  not the case as the elements  $(x,0)$ and $(x',0)$, where $x=(k-1)1_{[-m,m]}$ and $x'=1_{\{-m-1,m+1\}}$ are at distance  $O(q^{-m})$ from each other.}.
We start modifying $\vv$ so that while remaining injective, its image is a uniformly discrete subset of $\SOL_\R$. By Lemma \ref{lem:psi},  the preimage of any (left) $\Gamma_0$-translate of $D_0$ has cardinality bounded above by some constant $N$. In other words, every such translate of $D_0$ contains at most $N$ elements of $\Im\vv$. Since $D_0$ has non-empty interior, we can therefore modify $\vv$ to a map $\vv_0$ such that for all $g$, $\vv_0(g)$ belongs to the same $\Gamma_0$-translate of $D_0$ as $\vv(g)$, and such that $\vv_0$ has uniformly discrete image.

Our next step consists in modifying $\vv'$ further to have it land in some (different) copy of $\SOL_A$ in $\SOL_\R$.  Our goal is to do it in order to get an injective map from $\Lamp_k$ to $\SOL_A$. 
Since we have assumed (from the beginning) that $k\geq \lambda$, we already have that two elements of $D_0\cap \Im\vv$ have same $\R$-coordinate, which ensures that $\vv_0$ is already injective in restriction to the $\Z$ coordinates.  In order to obtain an injective map, we will replace $\Gamma_0$ by a copy which is ``denser' in the $\R^2$-coordinate.

For all $t>0$, consider the automorphism $\theta_t$ which multiplies the $\R^2$-coordinate by $t$. Let $\tau_t=\theta_t\circ \tau_0$, $\Gamma_t=\Im(\tau_t)$ and let $D_t=\theta_t(D_0)$. Observe that $D_t$ is a fundamental domain for the left $\Gamma_t$ action on $\SOL_\R$. Hence by choosing $t$ small enough, we can assume that the euclidean diameter of $D_t\cap \R^2$ is arbitrarily small. Combined with the fact that $\Im\vv_0$ is uniformly discrete, and that  $\theta_t$ does not affect the $\R$ coordinate, we can choose $t\leq 1$ small enough so  that  every $\Gamma_t$-translate of $D_t$  contains at most one element of $\Im\vv_0$. From now on, we fix such $t$ and omit the subscript $t$.

We define a map $\uu:\Lamp_k\to \Gamma$ as follows: $\uu(g)=\gamma\in \Gamma$ where $\gamma$ is the unique element such that $\vv_0(g)\in \gamma D$. Since $\gamma D$ contains at most one element of $\Im\vv_0$, the map $\uu$ is injective.

Now, let $C$ be such that $D_0\subset B_{\SOL_\R}(e,C/2)$. Note that we also have  $D\subset B_{\SOL_\R}(e,C/2)$. 
By construction,  $d(\uu(g),\vv(g))\leq C$ for all $g\in \Lamp_k$ (where the distance is measured with respect to the word metric of $\SOL_\R$). 

We let $\mathcal G_n=F_n^\Lamp$ and $\mathcal H_n=F_n^{\SOL}B_{\SOL_\R}(e, C)\cap \Gamma$. Since $\vv$ maps $F_n^\Lamp$ to a coarsely dense subset of $F_n^{\SOL}$, we have that $\uu$ maps it to a coarsely dense subset of $\mathcal H_n$. Restricting $\uu$ to $\mathcal G_n$, we therefore get a sequence of maps $\uu_n: \mathcal G_n\to \mathcal H_n$.

Let us check that $\mathcal H_n$ is a F\o lner sequence of $\Gamma$. Let $R$ be such that $T=B_{\SOL_\R}(e,R)\cap \Gamma$ defines a generating set of $\Gamma$. Then it is enough to prove that $\lim_{n\to \infty}\frac{\#\mathcal{H}_nT}{\#\mathcal{H}_n}=1$.  Note that
\[\frac{\#\mathcal{H}_nT}{\#\mathcal{H}_n}=\frac{\lambda(\mathcal{H}_nTD)}{\lambda(\mathcal{H}_nD)}.\]
On the other hand, suppose $g\in \SOL_\R$, then it belongs to a unique $\gamma D$, where $\gamma\in \Gamma$. In particular we have $d(g,\gamma)\leq C.$ So if $g\in F_n^{\SOL}$, then $\gamma\in \mathcal{H}_n$.
Hence,  $F_n^{\SOL}\subset \mathcal{H}_nD$, which implies that $\lambda(\mathcal{H}_nD)\geq \lambda(F_n^{\SOL})$.
Now, we have that $TD\subset B(e,R+C)$, hence $\mathcal{H}_nTD\subset F_n^{\SOL}B(e,R+2C)$. We deduce that 
\[\frac{\#\mathcal{H}_nT}{\#\mathcal{H}_n}\leq \frac{\lambda(F_n^{\SOL}B(e,R+2C))}{\lambda(F_n^{\SOL})},\]
which converges to $1$ since $F_n^{\SOL}$ is a F\o lner sequence.

We fix an arbitrary finite generating set for $\Gamma$.
Observe that since $\Gamma$ is a uniform lattice in $\SOL_\R$, its word metric is bi-Lischitz equivalent to the restriction the word metric of $\SOL_\R$.
We deduce from Lemma \ref{lem:psi} and the previous discussion that $\uu$ satisfies the following properties.

\begin{lemma}\label{lem:phi}
There exists a constant $L\geq 1$ such that $\uu$ satisfies the following properties:
\begin{itemize}
\item[(i)] it is $L$-Lipschitz;
\item[(ii)] it is injective;
\item[(iii)] it has $L$-dense image;
\item[(iv)] for all integers $q,m\geq 1$,
the proportion of points $g\in \mathcal G_n$ such that $\diam\uu^{-1}(B_{\Gamma}(\uu(g),q/L-L))\geq 2m+3q$ is at most $4k^{-m+1}$.
\end{itemize}
\end{lemma}
Our next step is the following consequence of (ii) and (iv) from the previous lemma.
\begin{lemma}\label{lem:Last}
There exists a constant $C'$ such that for all $\delta>0$, there exists $C'''$ such that for all $h\in \Gamma$,
\[\sum_{r\geq 0}e^{\delta r}\PP_{ \mathcal{H}_n}\left(\left\{y\in \uu( \mathcal{G}_n) \mid  \diam(\uu^{-1}(y)\cup \uu^{-1}(yh))\geq r \right\}\right)\leq C'''e^{C'\delta |h|}.\]
\end{lemma}
\begin{proof}
Reformulating (iv), we have 
\[ \PP_{ \mathcal{G}_n}\left(\left\{g \mid \diam\uu^{-1}(B_{\Gamma}(\uu(g),q/L-L))\geq 2m+3q\right\}\right)\leq 4 k^{-m+1}.  \]
By (ii), this implies that 
\[\PP_{ \mathcal{H}_n}\left(\left\{y\in \uu( \mathcal{G}_n)\mid \diam\uu^{-1}(B_{\Gamma}(y,q/L-L))\geq 2m+3q\right\}\right)\leq 4 k^{-m+1}.\]
We set $p=q/L-L$. Then the previous inequality implies the following one:
\[\PP_{ \mathcal{H}_n}\left(\left\{y\in \uu( \mathcal{G}_n)\mid \diam\uu^{-1}(B_{\Gamma}(y,p))\geq 2m+3pL+L^2p\right\}\right)\leq 4 k^{-m+1}.\]
Letting $r=2m+3pL+L^2p$, and $C=(3L+L^2)/2$ and we deduce 
\[\PP_{ \mathcal{H}_n}\left(\left\{y\in \uu( \mathcal{G}_n)\mid \diam\uu^{-1}(B_{\Gamma}(y,p))\geq r \right\}\right)\leq 4 k^{-r/2+Cp+1}.\]
Hence we deduce that for all $h\in \Gamma$, 
\[\PP_{ \mathcal{H}_n}\left(\left\{y\in \uu( \mathcal{G}_n)\mid  \diam(\uu^{-1}(y)\cup \uu^{-1}(yh))\geq r \right\}\right)\leq 4 k^{-r/2+C|h|+1}.\]
Let $r_0=4C|h|+4$.
Then for $r\geq r_0$, we have
\[\PP_{ \mathcal{H}_n}\left(\left\{y\in \uu( \mathcal{G}_n) \mid  \diam(\uu^{-1}(y)\cup \uu^{-1}(yh))\geq r \right\}\right)\leq 4 k^{-r/4}.\]
Besides for all $\delta>0$, $|h|\geq 1$
\[\sum_{r=0}^{r_0-1}e^{\delta r}\leq \frac{e^{\delta(4C|h|+4)}}{e^{\delta}-1}\leq C''e^{C'\delta |h|},\] where $C'=4C+4$, and $C''=1/(e^\delta-1)$.
We can now conclude that for $\delta\leq (\log k)/8$.
\begin{align*}
\sum_{r\geq 0}e^{\delta r}[\PP_{ \mathcal{H}_n}\left(\left\{y\in\uu( \mathcal{G}_n)\mid  \diam(\uu^{-1}(y)\cup \uu^{-1}(yh))\geq r \right\}\right)
& \leq C''e^{C'\delta |h|} +4\sum_{r=r_0}^{\infty}k^{-r/4} e^{\delta r}\\
& \leq C''e^{C'\delta |h|} +4 e^{\delta r_0} \sum_{r=0}^{\infty} k^{-r/4} e^{\delta r}\\
& \leq C''e^{C'\delta |h|} +4 e^{C'\delta |h|} \sum_{r=0}^{\infty} k^{-r/8}\\
& \leq C'''e^{C'\delta |h|},
\end{align*}
where $C'''=C''+4 \sum_{r=0}^{\infty} k^{-r/8}$, and the lemma is proven.
\end{proof}

\noindent{\bf Last step: from word distance to intrinsic distance.}

Note that in Lemmas \ref{lem:phi} and \ref{lem:Last}, we use the distance in the groups and not the intrinsic graph distance in the F\o lner sets. Obviously the last one is larger or equal than the first one, and it coincides for elements $x,y$ at distance at most $r$ in $\mathcal{H}_n$, such that $x\in \mathcal{H}_n^{(r)}$. Actually here,  $\mathcal{H}_n^{(r)}$ contains all vertices of $\mathcal{H}_n$ which are at $\Gamma$-distance at least $r+1$ from the complement of $\mathcal{H}_n$. 

Passing to the intrinsic distance, we might loose $L$-density of the map $\uu: \mathcal{G}_n\to \mathcal{H}_n$. But it can be restored by modifying slightly $\mathcal H_n$ as follows: replace it by $\mathcal H_n'=\mathcal H_nB_\Gamma(e_\Gamma,L)$ (which is still a F\o lner seqence). Now $\mathcal H_n$ lies in $\mathcal{H}_n'^{(L)}$ and by  the previous remark,
$\uu(\mathcal{G}_n)$ is $L$-dense in $\mathcal H_n$ for the {\it intrinsic distance} in $\mathcal H_n'$. But by construction $\mathcal H_n$ is $L$-dense in $\mathcal H'_n$ for the intrinsic distance: indeed the latter equals $\bigcup_{y\in \mathcal H_n}B_\Gamma(y,L)$.
Hence we deduce that $\uu(\mathcal{G}_n)$ is $2L$-dense in $\mathcal H_n'$ for the intrinsic distance of $\mathcal H_n'$. 
A similar argument shows that $\uu: \mathcal{G}_n\to \mathcal{H}'_n$ is $L$-Lipschitz.

To get a $\mathcal U$-statistical version of \cref{lem:Last} in intrinsic distance, we use the following lemma.
\begin{lemma}\label{lem:intrinsic}
We have
\[\lim_{\mathcal U}\PP_{ \mathcal{H}'_n}\left(\left\{y\in\uu( \mathcal{G}_n)\mid  \diam(\uu^{-1}(y)\cup \uu^{-1}(yh))= \diam_{\mathcal H_n'}(\uu^{-1}(y)\cup \uu^{-1}(yh))\right\}\right)=1. \]
\end{lemma}
\begin{proof}
Denote $r_y= \diam(\uu^{-1}(y)\cup \uu^{-1}(yh))$. 
By the previous discussion it suffices to prove that 
\begin{equation}\label{eq:r_y1}
\lim_{\mathcal U}\PP_{ \mathcal{H}'_n}\left(\left\{y\in\uu( \mathcal{G}_n)\mid \uu^{-1}(y)\in \mathcal{G}_n^{r_y}\right\}\right)=1.
\end{equation}
By \cref{lem:Last}, we know that 
\begin{equation}\label{eq:r_y2}
\lim_{r\to \infty}\lim_{\mathcal U}\PP_{ \mathcal{H}'_n}\left(\left\{y\in\uu( \mathcal{G}_n)\mid  r_y\leq r\right\}\right)=1.
\end{equation}
Denoting $\eta=\lim_{\mathcal U}|\mathcal G_n|/|\mathcal H'_n|$, we have, for each $r\geq 0$
\[\lim_{\mathcal U}\PP_{ \mathcal{H}'_n}\left(\left\{y\in\uu( \mathcal{G}_n)\mid \uu^{-1}(y)\notin \mathcal{G}_n^{(r)}\right\}\right)=\eta\PP_{ \mathcal{G}_n}\left(\left\{x\in \mathcal{G}_n\mid x\notin \mathcal{G}_n^{(r)}\right\}\right)=0.\]
Therefore, 
\[\lim_{\mathcal U}\PP_{ \mathcal{H}'_n}\left(\left\{y\in\uu( \mathcal{G}_n)\mid \uu^{-1}(y)\in \mathcal{G}_n^{(r)}\right\}\right)=1,\]
which combined with (\ref{eq:r_y2}) implies (\ref{eq:r_y1}). This proves the lemma. 
\end{proof}

Let us consider the sequence of maps $\uu_n: \mathcal{G}_n\to \mathcal{H}'_n$, induced by $\uu$. By the previous discussion, $\uu_n$ is injective, $L$-Lipschitz, has $2L$-dense image. 
Besides, combining \cref{lem:Last} and \cref{lem:intrinsic}, we deduce that $\uu_n$ satisfies 
\[\sum_{r\geq 0}e^{\delta r}\lim_{\mathcal U}\PP_{ \mathcal{H}'_n}\left(\left\{y\in \uu( \mathcal{G}_n) \mid  \diam_{\mathcal H'_n}(\uu^{-1}(y)\cup \uu^{-1}(yh))\geq r \right\}\right)\leq C'''e^{C'\delta |h|}.\]

\noindent{\bf Concluding step.}

 We are now ready to end the proof of Theorem \ref{thm:example}. 
The conditions of \cref{thm:quantME}  and Remark \ref{rem:Strong} being satisfied\footnote{Recall that the $L^{\infty}$ condition is formally equivalent to $\varphi$-integrability for a function $\varphi:\R_+\to \R_+\cup \{+\infty\}$, where $\varphi(t)=\infty$ for $t\geq 1$.}, we obtain the desired integrability conditions. The fact that the resulting ME coupling is mutually cobounded follows from \cref{prop:fundamentalDomainBis} together with the fact that $\uu_n$  is
injective, and has $2L$-dense image.
\end{proof}

\begin{proof}[Proof of \cref{thm:SolvableExpME}]
By Proposition 2.30 of \cite{DKLMT-22}, we know that the existence of such a measure equivalence is an equivalence relation. 
Then, by \cite[Theorem 8.1]{DKLMT-22}, we have that $(\Z/k\Z)\wr\Z$ and $\operatorname{BS}(1,k)$ are in the same equivalence class for any $k$.
By \cref{thm:example}, we have that $(\Z/k\Z)\wr\Z$ lies in the equivalence class of $\SOL_A$. The groups $\SOL_A$, for different matrices $A$, are all equivalent since they are uniform lattices in the same connected Lie group. So the theorem is proved.
\end{proof}

\bibliographystyle{alpha}
\bibliography{bib}

\begin{thebibliography}{DKLMT22}

\bibitem[Aus16]{Aus-16}
T.~Austin.
\newblock Integrable measure equivalence for groups of polynomial growth.
\newblock {\em Groups Geom. Dyn.}, 10(1):117--154, 2016.
\newblock With Appendix B by Lewis Bowen.

\bibitem[BZ21]{BrieusselZheng}
J.~Brieussel and T.~Zheng.
\newblock Speed of random walks, isoperimetry and compression of finitely
  generated groups.
\newblock {\em Ann. of Math.}, 193(2):1--105, 2021.

\bibitem[Car15]{Car-16}
A.~Carderi.
\newblock Ultraproducts, weak equivalence and sofic entropy.
\newblock {\em ArXiv e-prints.}, 2015.

\bibitem[CC19]{CecCor-19}
P.~Cecchi and M.~I. Cortez.
\newblock Invariant measures for actions of congruent monotileable amenable
  groups.
\newblock {\em Groups Geom. Dyn.}, 13(3):821--839, 2019.

\bibitem[CGP01]{coulhonGeometricApproachOndiagonal2001}
T.~Coulhon, A.~Grigor'yan, and C.~Pittet.
\newblock {A geometric approach to on-diagonal heat kernel lower bounds on
  groups}.
\newblock {\em Annales de l'Institut Fourier}, 51(6):1763--1827, 2001.

\bibitem[CKD13]{CKTD-13}
C.~T. Conley, A.~S. Kechris, and Tucker-Drob~R. D.
\newblock Ultraproducts of measure preserving actions and graph combinatorics.
\newblock {\em Theory Dynam. Systems}, 33(2):334--374, 2013.

\bibitem[Cor24]{Correia-24}
Corentin Correia.
\newblock On the absence of quantitatively critical measure equivalence
  couplings.
\newblock {\em ar{X}iv preprint arXiv:2411.07689}, 2024.

\bibitem[Dan16]{Dan-16}
A.~I. Danilenko.
\newblock Actions of finite rank: weak rational ergodicity and partial
  rigidity.
\newblock {\em Ergodic Theory Dynam. Systems}, 36(7):2138--2171, 2016.

\bibitem[Das18]{Das}
K.~Das.
\newblock From the geometry of box spaces to the geometry and measured
  couplings of the groups.
\newblock {\em Journal of Topology and Analysis}, 10(2):401--420, 2018.

\bibitem[dC08]{Cornulier_dimcone}
Y.~de~Cornulier.
\newblock Dimension of asymptotic cones of {L}ie groups.
\newblock {\em J. Topol.}, 1(2):342--361, 2008.

\bibitem[DKLMT22]{DKLMT-22}
T.~Delabie, J.~Koivisto, F.~Le~Ma{\^i}tre, and R.~Tessera.
\newblock Quantitative measure equivalence between amenable groups.
\newblock {\em Ann. H. Lebesgue}, 5:1417--1487, 2022.

\bibitem[DLIT25]{LpMEnilp}
T.~Delabie, C.~Llosa~Isenrich, and R.~Tessera.
\newblock $l^p$ measure equivalence of nilpotent groups.
\newblock {\em preprint}, 2025.

\bibitem[EFW13]{EFWII}
A.~Eskin, D.~Fisher, and K.~Whyte.
\newblock Coarse differentiation of quasi-isometries {II}: {R}igidity for {S}ol
  and lamplighter groups.
\newblock {\em Ann. of Math. (2)}, 177(3):869--910, 2013.

\bibitem[Esc24a]{Escalier1}
A.~Escalier.
\newblock Building prescribed quantitative orbit equivalences with
  $\mathbb{Z}$.
\newblock {\em Groups, Geometry and Dynamics}, 18(3):1007--1035, 2024.

\bibitem[Esc24b]{Escalier2}
A.~Escalier.
\newblock Sofic approximations and quantitative measure couplings.
\newblock {\em to appear in Illinois Journal of Mathematics}, 2024.

\bibitem[FM98]{MR1608595}
B.~Farb and L.~Mosher.
\newblock A rigidity theorem for the solvable {B}aumslag-{S}olitar groups.
\newblock {\em Invent. Math.}, 131(2):419--451, 1998.
\newblock With an appendix by Daryl Cooper.

\bibitem[Mac57]{MBS}
G.~W. Mackey.
\newblock Borel structure in groups and their duals.
\newblock {\em Trans. Amer. Math. Soc.}, 85:134--165, 1957.

\bibitem[OW80]{OrnWei-80}
D.~S. Ornstein and B.~Weiss.
\newblock Ergodic theory of amenable group actions. {I}. {T}he {R}ohlin lemma.
\newblock {\em Bull. Amer. Math. Soc. (N.S.)}, 2(1):161--164, 1980.

\bibitem[Sau06]{Sau-06}
R.~Sauer.
\newblock Homological invariants and quasi-isometry.
\newblock {\em Geom. Funct. Anal.}, 16(2):476--515, 2006.

\bibitem[Sha04]{Sha-04}
Y.~Shalom.
\newblock Harmonic analysis, cohomology, and the large-scale geometry of
  amenable groups.
\newblock {\em Acta Math.}, 192(2):119--185, 2004.

\bibitem[Tes06]{Tes-16}
R.~Tessera.
\newblock The large-scale geometry of locally compact solvable groups.
\newblock {\em Int. J. Algebra Comput.}, 26(2):449--281, 2006.

\bibitem[Why01]{Whyte01}
K.~Whyte.
\newblock The quasi-isometry types of the higher baumslag-solitar groups.
\newblock {\em Geom. Funct. Anal.}, 11(6):1327–1343, 2001.

\end{thebibliography}
\end{document}